\newcommand{\F}{\mathbb{F}} 
\newcommand{\lambdabar}{\overline{\lambda}}
\DeclareMathOperator{\Frob}{Frob} 
\newcommand{\mcal}{\mathcal}
\newcommand{\To}{\longrightarrow} 
\newcommand{\into}{\hookrightarrow} 
\newcommand{\onto}{\twoheadrightarrow} 
\newcommand{\isoto}{\stackrel{\sim}{\To}} 
\newcommand{\rec}{\operatorname{rec}}
\newcommand{\Art}{\operatorname{Art}}
\newcommand{\nind}{\operatorname{n-Ind}}
\newcommand{\bigO}{\mathcal{O}}
\newcommand{\Z}{\mathbb{Z}} 
\newcommand{\A}{\mathbb{A}} 
\newcommand{\Q}{\mathbb{Q}} 
\newcommand{\T}{\mathbb{T}} 
\newcommand{\gn}{\mathcal{G}_n}
\newcommand{\C}{\mathbb{C}}
\newcommand{\rhobar}{\overline{\rho}} 
\newcommand{\taubar}{\overline{\tau}}
\newcommand{\Gal}{\operatorname{Gal}} 
\newcommand{\GL}{\operatorname{GL}} 
\newcommand{\PGL}{\operatorname{PGL}}
\newcommand{\Qbar}{\overline{\Q}} 
\newcommand{\Qp}{\Q_p} 
\newcommand{\mf}{\mathfrak}
\newcommand{\Qpbar}{\overline{\Q}_p} 
\newcommand{\Fpbar}{\overline{\F}_p} 
\newcommand{\Fbar}{\overline{\F}} 
\newcommand{\rbar}{\overline{r}} 
\newcommand{\tv}{{\tilde{v}}} 
\newcommand{\gfv}{{G_{F_{\tilde{v}}}}} 
\newcommand{\ind}{\operatorname{Ind}_{KZ}^G} 
\newcommand{\cind}{\operatorname{c-Ind}_{KZ}^G}
\newcommand{\Res}{\operatorname{Res}}
\newcommand{\Spec}{\operatorname{Spec}} 
\newcommand{\Ind}{\operatorname{Ind}} 
\newcommand{\SL}{\operatorname{SL}} 
\newcommand{\ad}{\operatorname{ad}}
\newcommand{\End}{\operatorname{End}} 
\newcommand{\Hom}{\operatorname{Hom}} 
\newcommand{\Map}{\operatorname{Map}} 
\newcommand{\cS}{\mathcal{S}}
\newcommand{\Ext}{\operatorname{Ext}} 
\newcommand{\Aut}{\operatorname{Aut}} 
\newcommand{\Lie}{\operatorname{Lie}}
\newcommand{\Sym}{\operatorname{Sym}}
\newtheorem{thm}{Theorem}[subsection] 
\newtheorem{corollary}[thm]{Corollary} 
\newtheorem{lemma}[thm]{Lemma} 
\newtheorem{prop}[thm]{Proposition} 
\newtheorem{conj}[thm]{Conjecture} \theoremstyle{definition} 
\newtheorem{defn}[thm]{Definition} \theoremstyle{remark} 
\newtheorem{rem}[thm]{Remark} \numberwithin{equation}{subsection}
\begin{document} 
\title{Automorphic lifts of prescribed types} 
\author{Toby Gee} \email{toby.gee@ic.ac.uk} \address{Department of Mathematics, Imperial College London}

\subjclass[2000]{11F33.}
\begin{abstract}
	We prove a variety of results on the existence of automorphic Galois representations lifting a residual automorphic Galois representation. We prove a result on the structure of deformation rings of local Galois representations, and deduce from this and the method of Khare and Wintenberger a result on the existence of modular lifts of specified type for Galois representations corresponding to Hilbert modular forms of parallel weight 2. We discuss some conjectures on the weights of $n$-dimensional mod $p$ Galois representations. Finally, we use recent work of Taylor to prove level raising and lowering results for $n$-dimensional automorphic Galois representations.
\end{abstract}
\maketitle 
\tableofcontents 
\section{Introduction}
\subsection{}If $f$ is a cuspidal eigenform, there is a residual representation $$\rhobar_f:G_\Q\to\GL_2(\Fpbar)$$ attached to $f$, and one can ask which other cuspidal eigenforms $g$ give rise to the same representation; if one believes the Fontaine-Mazur conjecture, this is equivalent to asking which geometric representations lift $\rhobar_f$ (here ``geometric'' means unramified outside of finitely many primes and potentially semi-stable at $p$). These questions amount to issues of level-lowering and level-raising (at places other than $p$), and to determining the possible Serre weights of $\rhobar_f$ (at $p$). 

In recent years there has been a new approach to these questions, via the use of lifting theorems due to Ramakrishna and Khare-Wintenberger, together with modularity lifting theorems. In this paper, we present several new applications of this method.

In section \ref{local}, we prove a result about the structure of the local deformation rings corresponding to a mod $p$ representation of the absolute Galois group of a finite extension of $\Q_l$, where $l\neq p$. The method of proof is very close to that of \cite{kis06}, where the corresponding (harder) results are proved for the case $l=p$. This result is precisely the input needed to allow one to prove the existence of modular lifts of specified Galois type at places other than $p$, and we apply it repeatedly in the following sections.

In section \ref{1} we consider the case of Hilbert modular forms. We are able to prove a strong result on the possible lifts of parallel weight $2$, under the usual hypotheses for the application of the Taylor-Wiles method, and a hypothesis on the existence of ordinary lifts; in particular, one can deduce level-lowering results in full generality at places not dividing $p$ (assumed odd). We then apply this result in section \ref{serre} to improve on some of our results from \cite{gee053}  (where we use results from this paper to prove many cases of a conjecture of Buzzard, Diamond and Jarvis on the possible Serre weights of mod $p$ Hilbert modular forms). While we are not able to prove the conjectures of Buzzard, Diamond and Jarvis in full generality, we are able to in the case where $p$ splits completely, and the usual Taylor-Wiles hypothesis holds; this is needed in \cite{kis062}. We also take the opportunity to discuss some generalisations of existing conjectures on Serre weights, which are suggested to us by our work on Hilbert modular forms. For example, we state a general conjecture about the possible weights of an $n$-dimensional representation of $G_\Q$. We hope to discuss these conjectures more fully in future work.

Finally, in section \ref{unitary} we apply these methods to $n$-dimensional Galois representations. Here we make use of the recent work of Taylor (see \cite{tay06}). We are able to prove level-raising and level-lowering results at places away from $p$. Note that such results were originally thought to be needed to prove $R=T$ theorems in this context, but were circumvented in \cite{tay06}. Our proof relies crucially on this work, so does not give a new proof of these $R=T$ theorems. Our results in this final section are more limited than in the $2$-dimensional case, because we do not know any $R=T$ theorems over number fields ramified at $p$. In particular, we cannot at present generalise our approach from \cite{gee053} to this setting. However, the framework established here would suffice to prove such results if such $R=T$ theorems were proved.

We would like to thank Florian Herzig and Mark Kisin for helpful
conversations, and the mathematics department of Harvard University
for its hospitality while much of this paper was written. We would
like to thank Kevin Buzzard, Wansu Kim and James Newton for their
comments on an earlier draft. We would also like to thank the
anonymous referee for a close reading, and many helpful comments and
corrections.
\subsection{Notation}If $K$ is a field, we let $G_K$ denote its
absolute Galois group. We fix an algebraic closure $\Qbar$ of $\Q$,
and regard all algebraic extensions of $\Q$ as subfields of
$\Qbar$. For each prime $p$ we fix an algebraic closure $\Qpbar$ of
$\Qbar$ and an embedding $\Qbar\into\Qpbar$, so that if $F$ is a
number field and $v$ is a finite place of $F$ we have a homomorphism
$G_{F_v}\into G_F$. If $K$ is a finite extension of $\Qp$, we let
$I_K$ denote the inertia subgroup of $G_K$, and if $L$ is a finite Galois
extension of $K$ we write $I_{L/K}$ for $I_K/I_L$.

\section{Local structure of deformation
  rings}\label{local}\subsection{}In this section we prove some
results on the local structure of deformation rings corresponding to a
mod $p$ representation of the absolute Galois group of a finite
extension of $\Q_l$, where $l\neq p$. These results are the analogue
of those proved in section 3 of \cite{kis06} in the case $l=p$, and
the proofs are almost identical (but simpler in our case). In
\cite{kis06} one considers deformations of weakly admissible modules,
whereas in our case we consider deformations of Weil-Deligne
representations; the main difference is that in our case one has a
linear (rather than a semi-linear) Frobenius, and we have no analogue
of the Hodge filtration. The absence of such a filtration simplifies
the computation of the dimensions of our deformation rings. We follow
section 3 of \cite{kis06} extremely closely, only noting the changes
to the arguments of \emph{loc. cit.} that are necessary in our
setting.

Let $K/\Q_l$ ($l\neq p$) be a finite extension. Suppose that the residue field of $K$ has cardinality $l^f$. Fix $d$ a positive integer. As in \cite{kis06}, we use the language of groupoids, which is explained in the appendix to \cite{kis04}.

We firstly recall some results from \cite{fonl}. By a theorem of
Grothendieck, a continuous representation $\rho:G_K\to\GL_d(E)$, $E$ a
finite extension of $\Q_p$, is automatically potentially semi-stable,
in the sense that there is a finite extension $L/K$ such that
$\rho|_{I_L}$ is unipotent. Let $W_K$ denote the Weil group of $K$,
and $WD_K$ the Weil-Deligne group. Denote by $\underline{Rep}_E(WD_K)$
the category of finite dimensional $E$-linear representations of
$WD_K$. One can view an object of this category as a triple
$(\Delta,\rho_0,N)$, where $\Delta$ is a finite dimensional $E$-vector
space, $\rho_0:W_K\to\Aut_E(\Delta)$ is a homomorphism whose kernel
contains an open subgroup of $I_K$, and $N:\Delta\to\Delta$ is an
$E$-linear map satisfying $$\rho_0(w)N=l^{\alpha(w)}N\rho_0(w)\text{
  for all }w\in W_K.$$Here $\alpha:W_K\to\Z$ is the map sending $w\in
W_K$ to $\alpha(w)$ such that $w$ acts on the residue field
$\overline{k}$ of $\overline{K}$ as $\sigma^{\alpha(w)}$, where
$\sigma$ is the absolute Frobenius.

Now, fix $\Phi\in W_K$ with $\alpha(\Phi)=-f$. Fix a finite Galois
extension $L/K$. Then it is easy to see that the full subcategory of
$\underline{Rep}_E(WD_K)$ whose objects are triples
$(\Delta,\rho_0,N)$ as above with $\rho_0|_{I_L}$ trivial is
equivalent to the category $\underline{Rep}_{E,L}(WD_K)$ of triples
$(\Delta,\phi,N)$, where $\Delta$ is a finite dimensional $E$-vector
space with an action of $I_{L/K}$, $\phi\in\Aut_E(\Delta)$, and
$N\in\End_E(\Delta)$, such that $N$ commutes with the action of
$I_{L/K}$, $N\phi=l^f\phi N$, and for all $\gamma\in I_{L/K}$ we have
$\Phi\gamma\Phi^{-1}=\phi\gamma\phi^{-1}$ in $\Aut_E(\Delta)$ (this equivalence follows
from the fact that if $\psi\in W_K$ then $\psi\Phi^{\alpha(\psi)/f}\in
I_K$).

After making a choice of a compatible system of $p$-power roots of unity in $\overline{K}$, we see from Propositions 2.3.4 and 1.3.3 of \cite{fonl} that there is an equivalence of categories between the category of $E$-linear representations of $G_K$ which become semi-stable over $L$, and the full subcategory of $\underline{Rep}_{E,L}(WD_K)$ whose objects are the triples $(\Delta,\phi,N)$ such that the roots of the characteristic polynomial of $\phi$ are $p$-adic units (such an equivalence is given by the functor $\widehat{\underline{WD}}_{pst}$ of section 2.3.7 of \cite{fonl}). We will refer to such a triple as an \emph{admissible} triple. This is not standard terminology. Note for future reference that an extension of admissible objects is again admissible. For the rest of this section we will freely identify Galois representations with their corresponding Weil-Deligne representations.

Firstly we define two groupoids on the category of $\Q_p$-algebras $A$. Let $\mathfrak{Mod}_N$ be the groupoid whose fibre over a $\Q_p$-algebra $A$ consists of finite free $A$-modules $D_A$ of rank $d$, a linear action of $I_{L/K}$ on $D_A$, and a nilpotent linear operator $N$ on $D_A$. We require that the actions of $I_{L/K}$ and $N$ commute.

Let $\mathfrak{Mod}_{\phi,N}$ be the groupoid whose fibre over a $\Q_p$-algebra $A$ consists of a module $D_A$ in $\mathfrak{Mod}_N$ equipped with a linear automorphism $\phi$ such that $l^f\phi N=N\phi$ and $\phi\gamma\phi^{-1}=\Phi\gamma\Phi^{-1}$ for all $\gamma\in I_{L/K}$. There is a natural morphism $\mathfrak{Mod}_{\phi,N}\to\mathfrak{Mod}_N$ given by forgetting $\phi$.

Given $D_A$ in $\mathfrak{Mod}_{\phi,N}$, let $\ad D_A=\Hom_A(D_A,D_A)$. Give $\ad D_A$ an operator $\phi$ by $\phi(f):=\phi\circ f\circ\phi^{-1}$, and an operator $N$ given by $N(f):=N\circ f-f\circ N$. These satisfy $l^f\phi N=N\phi$. Give $\ad D_A$ an action of $I_{L/K}$ with $\gamma\in I_{L/K}$ taking $f$ to $\gamma\circ f\circ\gamma^{-1}$. We have an anti-commutative diagram $$
\begin{CD}
	(\ad D_A)^{I_{L/K}} @>1-\phi>>(\ad D_A)^{I_{L/K}}\\
	@VVNV @VVNV \\
	(\ad D_A)^{I_{L/K}} @>l^f\phi-1>>(\ad D_A)^{I_{L/K}}
\end{CD}
$$ Let $C^\bullet(D_A)$ denote the total complex of this double complex, and $H^\bullet(D_A)$ denote the cohomology of $C^\bullet(D_A)$. 
\begin{lemma}
	\label{2.1}Let $A$ be a local $\Q_p$-algebra with maximal ideal $\mathfrak{m}_A$, and $I\subset A$ an ideal with $I\mathfrak{m}_A=0$. Let $D_{A/I}$ be in $\mathfrak{Mod}_{\phi,N}(A/I)$ and set $D_{A/\mathfrak{m}_A}=D_{A/I}\otimes_{A/I}A/\mathfrak{m}_A$.
	\begin{enumerate}
		\item If $H^2(D_{A/\mathfrak{m}_A})=0$ then there exists a module $D_A$ in $\mathfrak{Mod}_{\phi,N}(A)$ whose reduction modulo $I$ is isomorphic to $D_{A/I}$. 
		\item The set of isomorphism classes of liftings of
                  $D_{A/I}$ to $D_A$ in $\mathfrak{Mod}_{\phi,N}(A)$
                  is either empty or a torsor under
                  $H^1(D_{A/\mathfrak{m}_A})\otimes_{A/\mathfrak{m}_A}I$. Here
                  two liftings $D_A$, $D'_A$ are isomorphic if there
                  exists a map $D_A\to D'_A$ which is compatible with
                  the $\phi$-, $I_{L/K}$-, $N$-actions and which reduces to the
                  identity modulo $I$.
	\end{enumerate}
\end{lemma}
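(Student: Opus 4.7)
The plan is the standard obstruction-theoretic interpretation of the complex $C^\bullet$: its two differentials are built from the two defining relations of $\mathfrak{Mod}_{\phi,N}$ (the commutation $l^f\phi N = N\phi$ and the twisted equivariance $\phi\gamma\phi^{-1}=\Phi\gamma\Phi^{-1}$), so obstructions to lifting live in $H^2$ and isomorphism classes of lifts form a torsor under $H^1$.

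The first step is to lift the underlying $\mathfrak{Mod}_N$-structure, which is unobstructed. Because $A$ is a $\Q_p$-algebra, $|I_{L/K}|$ is invertible in $A$; the $(A/I)[I_{L/K}]$-module underlying $D_{A/I}$ admits a free $A[I_{L/K}]$-module lift $\tilde D_A$ by standard idempotent-lifting, and choosing a basis in which $N$ is strictly upper triangular then lifting by the same matrix (followed by $I_{L/K}$-averaging) produces a nilpotent $I_{L/K}$-equivariant $\tilde N$ on $\tilde D_A$. Having fixed $(\tilde D_A,\tilde N)\in\mathfrak{Mod}_N(A)$, I then pick any $A$-linear $\tilde\phi:\tilde D_A\to\tilde D_A$ reducing to $\phi$ modulo $I$; Nakayama makes $\tilde\phi$ invertible, and averaging $\tilde\phi$ against $\gamma\mapsto\Phi\gamma\Phi^{-1}$ over $I_{L/K}$ (again using that $|I_{L/K}|$ is invertible in $A$) lets me assume the twisted equivariance $\tilde\phi\gamma\tilde\phi^{-1}=\Phi\gamma\Phi^{-1}$ holds on the nose.

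The only remaining obstruction is
\[
\epsilon_N := N\tilde\phi - l^f\tilde\phi\tilde N,
\]
which vanishes modulo $I$ and so lies in $I\cdot\ad\tilde D_A = (\ad D_{A/\mathfrak{m}_A})\otimes_{A/\mathfrak{m}_A}I$ (using $I\mathfrak{m}_A=0$), and is $I_{L/K}$-invariant given the equivariance we imposed, hence lies in $(\ad D_{A/\mathfrak{m}_A})^{I_{L/K}}\otimes I=C^2(D_{A/\mathfrak{m}_A})\otimes I$. A direct calculation shows that modifying $(\tilde\phi,\tilde N)$ by $(b,c)\in C^1(D_{A/\mathfrak{m}_A})\otimes I$ changes $\epsilon_N$ by $d^1(b,c)$, so the class $[\epsilon_N]\in H^2(D_{A/\mathfrak{m}_A})\otimes I$ is independent of all choices. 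Vanishing of $H^2$ then permits a modification of $(\tilde\phi,\tilde N)$ killing $\epsilon_N$ and producing the desired lift, proving (1). For (2), once a lift exists, the difference of two lifts is a 1-cocycle $(b,c)\in C^1(D_{A/\mathfrak{m}_A})\otimes I$ (the condition $d^1(b,c)=0$ being exactly compatibility of the relation $l^f\phi N=N\phi$ to first order), and isomorphisms of lifts induced by $\mathrm{id}+\eta$ with $\eta\in(\ad D_{A/\mathfrak{m}_A})^{I_{L/K}}\otimes I$ contribute precisely the 1-coboundaries $d^0(\eta)=((1-\phi)\eta,N\eta)$; so the isomorphism classes form a torsor under $H^1(D_{A/\mathfrak{m}_A})\otimes I$.

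The main obstacle I anticipate is purely organizational: tracking sign conventions in the total complex so that the cocycle condition really corresponds to the relation $l^f\phi N=N\phi$ to first order, and carefully justifying the $I_{L/K}$-averaging arguments that absorb the twisted equivariance into the restriction to $I_{L/K}$-invariants. Everything else is routine Nakayama and finite-group cohomology with invertible order.
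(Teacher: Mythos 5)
Your proof is correct and follows essentially the same obstruction-theoretic route as the paper (modelled on Proposition 3.1.2 of \cite{kis06}), the only difference being the order in which the unobstructed data is lifted: you lift $(I_{L/K},N)$ first and then $\phi$, while the paper lifts $(I_{L/K},\phi)$ together as a $W_K/I_L$-representation first and then $N$, but both reduce to the identical $H^2$-valued obstruction for the relation $l^f\phi N=N\phi$ and the same $H^1$-torsor structure in part (2). One minor remark: the care you take to ensure $\tilde N$ is nilpotent at the intermediate stage is both unnecessary and slightly shaky (averaging strictly upper-triangular lifts over $I_{L/K}$ need not preserve nilpotency), since once $l^f\tilde\phi\tilde N=\tilde N\tilde\phi$ holds with $\tilde\phi$ invertible the characteristic polynomial of $\tilde N$ is forced to be $T^d$, so nilpotency comes for free.
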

\begin{proof}
	This is very similar to the proof of Proposition 3.1.2. of
        \cite{kis06}. Let $D_A$ be a free $A$-module equipped with an
        isomorphism $D_A\otimes_A A /I\isoto D_{A/I}$. We wish to lift
        the action of $I_{L/K}$ to $D_A$ and the operator $\phi$ to an
        operator $\widetilde{\phi}$ on $D_A$ satisfying the same
        relations; equivalently, we wish to lift the corresponding
        representation of $W_K/I_L$. The obstruction to doing this is
        in $H^2(W_K/I_L,\ad D_{A/\mathfrak{m}_A})\otimes I$, which
        vanishes (this follows from the Hochschild-Serre spectral
        sequence and the fact that the group $I_{L/K}$ is finite). Similarly, $I_{L/K}$-invariant maps in $\Hom_{A/I}(D_{A/I},D_{A/I})$ lift to $I_{L/K}$-invariant maps in $\Hom_A(D_A,D_A)$, so we can lift $N$ to an endomorphism $\widetilde{N}$ which commutes with the $I_{L/K}$-action.
	
	Then $D_{A/I}$ lifts to an object of $\mathfrak{Mod}_{\phi,N}(A)$ if and only if we can choose $\widetilde{N}$, $\widetilde{\phi}$ so that $h:=\widetilde{N}-l^f\widetilde{\phi}\widetilde{N}\widetilde{\phi}^{-1}=0$. We see that $h$ induces an $I_K$-invariant map $h:D_{A/\mathfrak{m}_A}\to D_{A/\mathfrak{m}_A}\otimes_A I$, so if $H^2(D_{A/\mathfrak{m}_A})=0$ we can write $h=N(f)+(l^f\phi-1)(g)$, with $f$, $g\in(\ad D_{A/\mathfrak{m}_A})^{I_{L/K}}\otimes_{A/\mathfrak{m}_A}I$. If we then set $\widetilde{N}'=\widetilde{N}+g$, $\widetilde{\phi}'=\widetilde{\phi}-f\circ\phi$, we see that $\widetilde{N}'\widetilde{\phi}'=l^f\widetilde{\phi}'\widetilde{N}'$, which proves (1). The proof of (2) is then formally identical to the proof of part (2) of Proposition 3.1.2. of \cite{kis06} (after replacing every occurence of $p$ with $l^f$, and $G_{L/K}$ with $I_{L/K}$).
\end{proof}
\begin{corollary}
	\label{2.2}Let $A$ be a $\Q_p$-algebra and $D_A$ an object of $\mathfrak{Mod}_{\phi,N}(A)$. Suppose that the morphism $A\to\mathfrak{Mod}_{\phi,N}$ given by $D_{A}$ is formally smooth and that $H^2(D_A)=0$. Then $A$ is formally smooth over $\Q_p$.
\end{corollary}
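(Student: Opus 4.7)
My plan is to verify formal smoothness of $A$ over $\Q_p$ directly from the infinitesimal lifting criterion. It suffices to fix a small extension $B \twoheadrightarrow B/I$ of local Artinian $\Q_p$-algebras with $I \cdot \mathfrak{m}_B = 0$ (any square-zero extension in the Artinian local setting reduces to a finite sequence of such by filtering $I$ via $I \supset I\mathfrak{m}_B \supset I\mathfrak{m}_B^2 \supset \cdots$), together with a $\Q_p$-algebra homomorphism $f : A \to B/I$, and to produce a lift $\tilde f : A \to B$ of $f$.

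Set $D_{B/I} := D_A \otimes_{A, f} B/I \in \mathfrak{Mod}_{\phi,N}(B/I)$. The strategy has two steps: first, produce a lift $D_B \in \mathfrak{Mod}_{\phi,N}(B)$ of $D_{B/I}$ using Lemma \ref{2.1}(1); second, apply formal smoothness of the morphism $A \to \mathfrak{Mod}_{\phi,N}$ determined by $D_A$ to the pair $(f, D_B)$. The output of the second step is precisely a map $\tilde f : A \to B$ with $D_A \otimes_{A, \tilde f} B \cong D_B$ compatibly with the reduction modulo $I$, and in particular $\tilde f$ lifts $f$ as required.

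The only nontrivial input is the hypothesis of Lemma \ref{2.1}(1), namely $H^2(D_{B/\mathfrak{m}_B}) = 0$. I would verify this by observing that each term of the complex $C^\bullet(D_A)$ is a copy of $(\ad D_A)^{I_{L/K}}$, which is a direct summand of the finite free $A$-module $\ad D_A$ because the finite integer $|I_{L/K}|$ is invertible in any $\Q_p$-algebra, so the usual averaging idempotent is defined. In particular $C^\bullet(D_A)$ is a bounded complex of flat $A$-modules, so $H^2 = \coker(C^1(D_A) \to C^2(D_A))$ commutes with arbitrary base change along $A \to B/\mathfrak{m}_B$, and hence $H^2(D_{B/\mathfrak{m}_B}) \cong H^2(D_A) \otimes_A B/\mathfrak{m}_B = 0$ by hypothesis. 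With this base change observation and Lemma \ref{2.1} in hand the argument is entirely formal, and I do not anticipate any serious obstacle; the only step with real content is the vanishing of $H^2$ after base change.
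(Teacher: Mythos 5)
Your proof is correct and reconstructs, in the present setting, exactly the argument of Corollary 3.1.3 of Kisin's paper that the authors cite. The two moves — lifting the module along a small extension via Lemma \ref{2.1}(1), then invoking formal smoothness of $A\to\mathfrak{Mod}_{\phi,N}$ to lift the ring map — are the intended ones, and you correctly identify that the only point requiring an argument is the vanishing $H^2(D_{B/\mathfrak m_B})=0$. Your treatment of that point is right: the invertibility of $|I_{L/K}|$ gives an averaging idempotent, so each term $(\ad D_A)^{I_{L/K}}$ is a direct summand of the finite free module $\ad D_A$ and hence commutes with base change, which identifies $C^\bullet(D_{B/\mathfrak m_B})$ with $C^\bullet(D_A)\otimes_A B/\mathfrak m_B$; then $H^2$, being the cokernel of $C^1\to C^2$, commutes with base change by right-exactness alone (the flatness you invoke is stronger than what the cokernel step needs, but it is what makes the complex itself base change, so nothing is wrong). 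The reduction to small extensions of Artinian local $\Q_p$-algebras by filtering $I\supset I\mathfrak m_B\supset\cdots$ is the standard one and is also implicit in the cited argument.
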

\begin{proof}
	Identical to that of Corollary 3.1.3. of \cite{kis06}.
\end{proof}

Let $E/\Q_p$ be a finite extension, and let $D_E$ be a finite free $E$-module of rank $d$ with an action of $I_{L/K}$. Consider the functor which to an $E$-algebra $A$ assigns the set of pairs $(\phi,N)$ where $\phi$, $N$ are endomorphisms of $D_A:=D_E\otimes_E A$ making $D_A$ into an object of $\mathfrak{Mod}_{\phi,N}(A)$. This functor is represented by an $E$-algebra $B_{\phi,N}$, with $X_{\phi,N}=\Spec B_{\phi,N}$ a locally closed subscheme of $\Hom_E(D_E,D_E)^2$. Similarly, let $B_N$ represent the analogous functor considering only objects of $\mathfrak{Mod}_N(A)$, and let $X_N=\Spec B_N$. There is a natural map $X_{\phi,N}\to X_N$.
\begin{lemma}
	\label{2.3}Let $D_{B_{\phi,N}}=D_E\otimes_{\Q_p}B_{\phi,N}$, a vector bundle on $X_{\phi,N}$.
	\begin{enumerate}
		\item The morphism of groupoids on $E$-algebras $B_{\phi,N}\to\mathfrak{Mod}_{\phi,N}$ is formally smooth.
		\item There is a dense open subset $U\subset X_{\phi,N}$ such that $H^2(D_{B_{\phi,N}})|_U=0$.
	\end{enumerate}
\end{lemma}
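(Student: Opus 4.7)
The plan is to imitate the proof of Lemma~3.1.4 of \cite{kis06} (the analogue for $l=p$), which is in fact slightly easier in our setting because there is no Hodge filtration to track.

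For part (1), I unwind the definitions: an $A$-point of $\Spec B_{\phi,N}$ is by definition a pair $(\phi_A, N_A)$ making the fixed module $D_E \otimes_E A$ into an object of $\mathfrak{Mod}_{\phi,N}(A)$. Given a square-zero extension $A \to A/I$ of $E$-algebras, a map $\Spec A/I \to \Spec B_{\phi,N}$, and a compatible lift $D_A \in \mathfrak{Mod}_{\phi,N}(A)$, I need to produce an $A$-point of $\Spec B_{\phi,N}$ mapping to $D_A$. The underlying $I_{L/K}$-module of $D_A$ is a priori only isomorphic to $D_E \otimes_E A$ modulo $I$, so my task reduces to lifting that isomorphism to one over $A$ and then transporting $(\phi, N)$. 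Such a lift exists because $|I_{L/K}|$ is invertible in the $E$-algebra $A$, so all $\Ext^1_{A[I_{L/K}]}$-groups between finite free $A[I_{L/K}]$-modules vanish.

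For part (2), I observe that $C^\bullet(D_{B_{\phi,N}})$ is a complex of vector bundles on $X_{\phi,N}$, so the cohomology sheaf $\mathcal{H}^2$, being a coherent quotient, has upper-semicontinuous fibre dimension. Hence $U = \{x : H^2(D_x) = 0\}$ is automatically Zariski open, and density reduces to proving $H^2(D_\eta) = 0$ at the generic point $\eta$ of every irreducible component $Z$ of $X_{\phi,N}$.

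Explicitly, writing $M = (\ad D_\eta)^{I_{L/K}}$, we have $H^2(D_\eta) = M / (N M + (l^f \phi - 1) M)$. The cokernel of $(l^f \phi - 1)$ acting on $M$ is identified with the $l^{-f}$-eigenspace of the semisimple part of $\ad \phi$; this eigenspace necessarily contains $N$, since the defining relation forces $\phi N \phi^{-1} = l^{-f} N$. The remaining content is then that $\ad N$ surjects from the centraliser of $\phi$ in $M$ onto this $l^{-f}$-eigenspace. At a generic point of $Z$, the eigenvalues of $\phi$ on $D_\eta$ are as spread out as compatibility with $N$ allows, and a direct case-by-case calculation on the Jordan type of $N$ (entirely parallel to \cite{kis06}) confirms the surjection; this is the main obstacle, and handling the various Jordan types uniformly is where the bulk of the technical work lies.
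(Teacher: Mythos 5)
Your treatment of part (1) is fine, and indeed a bit more explicit than the paper, which dispenses with it as ``immediate by definition.'' The lifting of the $I_{L/K}$-module structure via vanishing of $H^1(I_{L/K},\cdot)$ and $H^2(I_{L/K},\cdot)$ in characteristic zero is exactly the content.

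For part (2) you have identified the correct reduction (openness of $U$ plus the need to show $H^2$ vanishes at each generic point of $X_{\phi,N}$), but the remainder of the argument has a genuine gap. You propose to analyse the generic point $\eta$ of each irreducible component $Z$ of $X_{\phi,N}$ directly, asserting that ``the eigenvalues of $\phi$ on $D_\eta$ are as spread out as compatibility with $N$ allows.'' But nothing you say establishes this, and it is far from formal: the irreducible components of $X_{\phi,N}$ are not described, there is no reason a priori that $Z$ contains a point with generic $\phi$-eigenvalues, and no case-by-case analysis of Jordan types is actually carried out (you explicitly defer it). The paper sidesteps all of this with a cleaner device: it fibres $X_{\phi,N}$ over $X_N$. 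Over a point $y\in X_N$ (i.e.\ a fixed $N$ and $I_{L/K}$-action), the fibre $(X_{\phi,N})_y$ is the non-vanishing-determinant locus of an affine space of linear maps $\phi$ satisfying the commutation constraints, hence smooth and connected, hence irreducible. So it suffices to produce a \emph{single} $\phi$ in each nonempty fibre with $H^2(D_y)=0$, which the paper does explicitly: after decomposing $D_y$ into $I_{L/K}$-isotypic pieces $D_{y,\tau}$ and a Jordan-type basis for $N$, it writes down $\phi(e^j_{i,k})=l^{f(s_j-k)}e^j_{i+1,k}$ (with cyclic $i$) and checks $H^2=0$ for that choice. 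This obviates any need to understand the components of $X_{\phi,N}$ or to argue about genericity of eigenvalues. To repair your proof you would either have to adopt this fibration trick, or else supply both a description of the components of $X_{\phi,N}$ and the omitted eigenvalue/Jordan-type analysis, which is substantially more work.
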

\begin{proof}
	This is very similar to the proof of Lemma 3.1.5. of \cite{kis06}. (1) is immediate by definition. Let $U\subset X_{\phi,N}$ be the complement of the support of $H^2(D_{B_{\phi,N}})$. We must show that $U$ is dense. It suffices to show that $U$ is dense in every fibre of $X_{\phi,N}\to X_N$. Let $y\in X_N$, and let $D_y$ be the pullback to $y$ of the tautological vector bundle on $X_N$. Then $(X_{\phi,N})_y$ can be identified with an open subset (given by the non-vanishing of the determinant) of the $\kappa(y)$-vector space of linear maps $\phi:D_{y}\to D_{y}$ which satisfy the required commutation relations with $N$ and $I_{L/K}$. Then $(X_{\phi,N})_y$ is smooth and connected, so irreducible, so we need only check that if $(X_{\phi,N})_y$ is non-empty, then there is a point of $(X_{\phi,N})_y$ at which $H^2(D_{B_{\phi,N}})=0$.
	
	We have a decomposition $$D_y\isoto\oplus_\tau\Hom_{I_{L/K}}(\tau,D_y)\otimes_{\Q_p}\tau$$with $\tau$ running over the irreducible representations of $I_{L/K}$ on $\Q_p$-vector spaces. The $N$-action on $D_y$ induces an action on $D_{y,\tau}:=\Hom_{I_{L/K}}(\tau,D_y)$, and the above decomposition is then compatible with $N$-actions.
	
	For any such $\tau$ and $i$ an integer, we define
        $\tau^{\phi^i}$, with the same underlying $\Qp$-vector space
        as $\tau$, by letting $\gamma\in I_{L/K}$ act on
        $\tau^{\phi^i}$ as $\Phi^i\gamma\Phi^{-i}$ acts on
        $\tau$. Since $(X_{\phi,N})_{y}$ is nonempty, there exists a
        map $\phi:D_{y}\to D_{y}$ such that $l^{f}\phi N=N\phi$. Then
        $\phi$ induces an isomorphism $D_{y,\tau}\to
        D_{y,\tau^\phi}$. Let $n_\tau$ be the least positive integer
        such that $\tau^{\phi^{n_\tau}}=\tau$. Then since $l^f\phi
        N=N\phi$, we see that it is possible to write
        $D_{y,\tau^{\phi^i}}=\oplus_j D_{i,j}$ where $D_{i,j}$ is
        $\kappa(y)^{s_j}$ for some $s_{j}$ with canonical basis
        $(e^j_{i,1},\dots,e^j_{i,s_j})$, and
        $N(e^j_{i,k})=e^j_{i,k+1}$ $(1\leq k\leq s_j-1)$,
        $N(e^j_{i,s_j})=0$. We then consider the endomorphism $\phi$
        of $\oplus_{i=1}^{n_{\tau}} D_{y,\tau^{\phi^i}}$ given by
        $\phi(e^j_{i,k})=l^{f(s_j-k)}e^j_{i+1,k}$, where we consider
        the $i$-index to be cyclic. This satisfies $l^f\phi
        N=N\phi$. Equipping $D_y$ with the resulting choice of $\phi$,
        we find that $H^2(D_y)=0$, as required.
\end{proof}
Let $A^\circ$ be a complete Noetherian local $\bigO_E$-algebra with
finite residue field,
and $V_{A^\circ}$ be a finite free $A^\circ$-module of rank $d$
equipped with a continuous action of $G_K$. Write
$A=A^\circ[1/p]$. Define a \emph{Galois type} $\tau$ to be the
restriction to $I_K$ of a $d$-dimensional $p$-adic Weil-Deligne
representation with open kernel, assumed from now on to contain
$I_L$. We say that a $p$-adic Galois representation is of type $\tau$
if the restriction to $I_K$ of the corresponding Weil-Deligne
representation is isomorphic to $\tau$.

There is a quotient $A^\tau$ of $A$ such that for any finite
$E$-algebra $B$, a map of $E$-algebras $x:A\to B$ factors through
$A^\tau$ if and only if $V_B=V_A\otimes_A B$ has type $\tau$ (this is
clear, because the isomorphism class of a finite-dimensional
representation of a finite group in characteristic zero is determined
by its trace). It follows easily from the fact that the cohomology of
the finite group $I_{L/K}$ with coefficients in characteristic 0
vanishes in positive degree that $A^\tau$ is a union of irreducible
components of $A$.

\begin{prop}
	\label{2.4}Let $A$ be a Noetherian $\Q_p$-algebra and $D_A$ an object of $\mathfrak{Mod}_{\phi,N}$. If $A\to\mathfrak{Mod}_{\phi,N}$ is formally smooth, then there is a dense open subset $U\subset\Spec A$ such that $U$ is formally smooth over $\Q_p$, and the support of $H^2(D_A)$ does not meet $U$.
\end{prop}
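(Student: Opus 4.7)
My plan is to follow the argument of the corresponding result in \cite{kis06} (e.g.\ Proposition 3.1.6 of \emph{loc.~cit.}), transferring the dense open on which $H^2$ vanishes from Lemma~\ref{2.3}(2) along a smooth correspondence, and then invoking Corollary~\ref{2.2} for the formal smoothness over $\Q_p$.

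First, since the conclusion is Zariski-local on $\Spec A$, I may freely localize $A$. After enlarging $E$ so that all irreducible $\Qpbar[I_{L/K}]$-representations are defined over $E$, and base-changing $A$, I can arrange on such a localization that there is a $\Q_p[I_{L/K}]$-module $D_E$ together with an isomorphism $D_A \cong D_E \otimes_E A$ of $A[I_{L/K}]$-modules (using the rigidity of representations of the finite group $I_{L/K}$ in characteristic zero). With this trivialization, the pair $(\phi,N)$ on $D_A$ becomes a morphism $x: \Spec A \to X_{\phi,N}$, where $X_{\phi,N}$ is as in Lemma~\ref{2.3}.

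Next, I form the fiber product $Z := \Spec A \times_{\mathfrak{Mod}_{\phi,N}} X_{\phi,N}$, with projections $p_1: Z \to \Spec A$ and $p_2: Z \to X_{\phi,N}$. By Lemma~\ref{2.3}(1), $X_{\phi,N} \to \mathfrak{Mod}_{\phi,N}$ is formally smooth, so the base change $p_1$ is formally smooth, hence smooth. The hypothesis on $A$ makes $\Spec A \to \mathfrak{Mod}_{\phi,N}$ formally smooth, and so $p_2$ is smooth as well. The trivialization from the previous step provides a section of $p_1$, so $p_1$ is surjective. By Lemma~\ref{2.3}(2), I pick a dense open $V \subset X_{\phi,N}$ on which $H^2(D_{B_{\phi,N}})$ vanishes. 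Replacing $X_{\phi,N}$ by the open image $p_2(Z)$ (possible since $p_2$ is smooth, hence open), I may assume $p_2$ is faithfully flat, so that $V' := p_2^{-1}(V)$ is dense open in $Z$.

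Finally, set $U := p_1(V') \subset \Spec A$; this is open since $p_1$ is smooth. For density, any nonempty open $W \subset \Spec A$ has $p_1^{-1}(W)$ nonempty and open in $Z$ (by surjectivity of $p_1$), and this meets the dense open $V'$, so $U \cap W \neq \emptyset$. For any $u \in U$, picking $z \in V'$ with $p_1(z) = u$, flat base change through $p_2$ gives $H^2(D_Z)_z = 0$, and then faithful flatness of the local ring map $A_u \to \mathcal{O}_{Z,z}$ (coming from smoothness of $p_1$) yields $H^2(D_A)_u = 0$; thus $U$ avoids the support of $H^2(D_A)$. Since formal smoothness is inherited by open subschemes, $U \to \mathfrak{Mod}_{\phi,N}$ remains formally smooth, and Corollary~\ref{2.2} then gives that $U$ is formally smooth over $\Q_p$. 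The principal subtlety I anticipate is the trivialization---arranging $D_A \cong D_E \otimes_E A$ after a suitable localization of $A$ and enlargement of $E$, so that one can actually land in the $X_{\phi,N}$ picture of Lemma~\ref{2.3}---together with checking that the two projections from $Z$ have the smoothness and surjectivity properties needed for the descent. Once the correspondence diagram is in place, the rest is a formal flat-base-change argument.
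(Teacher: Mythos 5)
Your argument follows the same strategy as the paper, which simply defers to the proof of Proposition 3.1.6 of \cite{kis06}: localize $A$ to trivialize the $I_{L/K}$-action, form the fiber product $Z=\Spec A\times_{\mathfrak{Mod}_{\phi,N}}X_{\phi,N}$, transport the dense $H^2$-vanishing locus of Lemma~\ref{2.3}(2) along the two projections, and conclude with Corollary~\ref{2.2}. One small repair is needed: $p_2\colon Z\to X_{\phi,N}$ is in general \emph{not} smooth (hence you cannot invoke openness of its image), because it is a base change of $\Spec A\to \mathfrak{Mod}_{\phi,N}$ and $A$ is only assumed Noetherian, not of finite type over $\Q_p$, so $p_2$ need not be of finite type. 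What you actually need --- that $V'=p_2^{-1}(V)$ is dense in $Z$ --- follows from $p_2$ being flat (which formal smoothness does give for Noetherian local rings): a flat local homomorphism has going-down, so it sends minimal primes to minimal primes; thus every generic point of $Z$ maps into the dense open $V\subset X_{\phi,N}$, and hence lies in $V'$. With this adjustment the rest of your argument (smoothness and surjectivity of $p_1$, flat base change of $H^2$ through both projections, and the application of Corollary~\ref{2.2} affine-locally on $U$) goes through as written.
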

\begin{proof}
	This is formally identical to the proof of Proposition 3.1.6. of \cite{kis06}, using Corollary \ref{2.2} and Lemma \ref{2.3}.
\end{proof}

Now let $V_\F=V_{A^\circ}\otimes_{A^\circ}\F$. Let $D_{V_\F}$ be the groupoid on the category of complete local $\bigO_E$-algebras with residue field $\F$, whose fibre over such an algebra $B$ consists of the deformations of $V_\F$ to a $G_K$-representation on a finite free $B$-module $V_B$.

Let $\mathfrak{m}$ be a maximal ideal of $A^\tau$, and $E'$ its residue field. For each $i\geq 1$ the $G_K$-representation $V_{A^\circ}\otimes_{A^\circ}A^\tau/\mathfrak{m}^iA^\tau$ gives an object of $\mathfrak{Mod}_{\phi,N}(A^\tau/\mathfrak{m}^i A^\tau)$ via taking the corresponding Weil-Deligne representation. Passing to the limit gives a morphism of groupoids on $E$-algebras $\hat{A}^\tau_\mathfrak{m}\to\mathfrak{Mod}_{\phi,N}$.
\begin{prop}
	\label{2.5}Suppose that the morphism $A^\circ\to D_{V_\F}$ is formally smooth. Then the morphism $\hat{A}^\tau_\mathfrak{m}\to\mathfrak{Mod}_{\phi,N}$ of groupoids on $E$-algebras is formally smooth.
\end{prop}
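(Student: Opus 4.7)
The proof follows the strategy of section 3 of \cite{kis06}, specifically the analogue there of the present statement. We verify formal smoothness via the infinitesimal lifting criterion: given a small extension $B'\twoheadrightarrow B$ of finite local $E$-algebras with kernel $I$ satisfying $I\mathfrak{m}_{B'}=0$, a morphism $x:\hat{A}^\tau_\mathfrak{m}\to B$ of $E$-algebras, and a lift $D_{B'}\in\mathfrak{Mod}_{\phi,N}(B')$ of the object $D_B$ associated to $x$, we must produce $\tilde{x}:\hat{A}^\tau_\mathfrak{m}\to B'$ lifting $x$ and inducing $D_{B'}$. The plan is to construct compatible integral models on both $B$ and $B'$, use the equivalence recalled above between Weil--Deligne representations and Galois representations to turn the Weil--Deligne lifting problem into a Galois-representation lifting problem, and then apply the hypothesis that $A^\circ\to D_{V_\F}$ is formally smooth.

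The object $D_{B'}$ corresponds to a continuous Galois representation $V_{B'}$ on a finite free $B'$-module of rank $d$ lifting $V_B:=V_{A^\circ}\otimes_{A^\circ,x}B$; admissibility of $D_{B'}$ follows from admissibility of $D_B$ by the extension-closure property noted just before Lemma \ref{2.1}. Enlarging $E'$ if necessary so that its residue field contains $\F$, let $B^\circ$ be the image of $A^\circ$ in $B$ under $A^\circ\to A\twoheadrightarrow A^\tau\to\hat{A}^\tau_\mathfrak{m}\xrightarrow{x}B$, and let $B'^\circ$ be the preimage of $B^\circ$ under $B'\twoheadrightarrow B$. Both are complete local Noetherian $\mathcal{O}_E$-subalgebras with residue field $\F$, satisfying $B^\circ[1/p]=B$, $B'^\circ[1/p]=B'$, and $B'^\circ\twoheadrightarrow B^\circ$ is a small extension with kernel $I$. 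The natural lattice $V_{B^\circ}:=V_{A^\circ}\otimes_{A^\circ}B^\circ\subset V_B$ is $G_K$-stable. Choose a $B^\circ$-basis $(\bar{e}_i)$ of $V_{B^\circ}$, lift to elements $\tilde{e}_i\in V_{B'}$, and set $V_{B'^\circ}:=\bigoplus_i B'^\circ\tilde{e}_i$. Since $I$ is killed by $\mathfrak{m}_{B'}$, one checks $IV_{B'}=IV_{B'^\circ}\subset V_{B'^\circ}$; then for $g\in G_K$ and $v\in V_{B'^\circ}$, the reduction of $gv$ modulo $IV_{B'}$ lies in $V_{B^\circ}$ by $G_K$-stability, so $gv\in V_{B'^\circ}+IV_{B'}=V_{B'^\circ}$. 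Thus $V_{B'^\circ}$ is a $G_K$-stable $B'^\circ$-lattice lifting $V_{B^\circ}$.

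Now $(B'^\circ,V_{B'^\circ})$ is a deformation of $V_\F$ whose reduction modulo $I$ is the deformation coming from $A^\circ\to B^\circ$. Formal smoothness of $A^\circ\to D_{V_\F}$ then yields a lift $A^\circ\to B'^\circ$ of $A^\circ\to B^\circ$ realizing $V_{B'^\circ}$. Composition with $B'^\circ\hookrightarrow B'$ and inversion of $p$ gives a map $A\to B'$, which factors through $A^\tau$ because the associated $V_{B'}$ has type $\tau$ (its restriction to $I_K$ has the same trace as that of $V_B$, which is of type $\tau$), and then through $\hat{A}^\tau_\mathfrak{m}$ (the composition $A^\tau\to B'\to E'$ agrees with the map induced by $x$ and so has kernel $\mathfrak{m}$, while $\mathfrak{m}_{B'}$ is nilpotent). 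By the equivalence of categories, this $\tilde{x}$ induces $D_{B'}$. The main obstacle is the construction of the $G_K$-stable lattice $V_{B'^\circ}$, which relies crucially on $I$ being annihilated by $\mathfrak{m}_{B'}$ so that $IV_{B'}\subset V_{B'^\circ}$ regardless of the basis choice; the rest is formal.
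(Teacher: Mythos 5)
Your overall strategy is the right one and matches what the paper intends (the paper's proof reduces to a Noetherian complete local $B$, checks admissibility so the objects of $\mathfrak{Mod}_{\phi,N}$ correspond to Galois representations, verifies the type is preserved, and then defers the remaining lattice-and-lift step to the last paragraph of Kisin's Proposition~3.3.1). Your verification that types are preserved via traces, and that the resulting map factors through $\hat{A}^\tau_{\mathfrak{m}}$, are correct.

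There is, however, a genuine gap in the construction of $B'^\circ$. You take $B'^\circ$ to be the full preimage of $B^\circ$ in $B'$, and assert it is a complete local Noetherian $\mathcal{O}_E$-algebra with residue field $\F$. This is false: the kernel $I$ is a nonzero $E'$-vector space, hence infinite-dimensional over $\F$, and since the uniformiser $\lambda\in\mathcal{O}_E$ is a unit in $B'$ one has $\mathfrak{m}_{B'^\circ}I = I$. By Nakayama $I$ cannot be finitely generated over $B'^\circ$, so $B'^\circ$ is not Noetherian; worse, $I\subset\bigcap_n\mathfrak{m}_{B'^\circ}^n$, so $B'^\circ$ is not even $\mathfrak{m}$-adically separated, hence not an object of $\mathcal{C}_{\mathcal{O}_E}$. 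Consequently the formal smoothness of $A^\circ\to D_{V_\F}$ --- a statement about Artinian (or complete Noetherian) local $\mathcal{O}_E$-algebras with residue field $\F$ --- does not directly apply to $B'^\circ\twoheadrightarrow B^\circ$. Note also that you cannot simply shrink $B'^\circ$ to a lattice in $B'$ (e.g.\ $B^\circ\oplus I_0$ for a lattice $I_0\subset I$), because your argument for $G_K$-stability of $V_{B'^\circ}$ uses $IV_{B'}\subset V_{B'^\circ}$, which needs all of $I$, not just $I_0$. This is exactly the technical point that the final paragraph of Kisin's Proposition~3.3.1 is designed to handle (by producing a finitely generated integral model that already contains the relevant matrix entries), and the argument as you have written it does not close this gap.
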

\begin{proof}
	This is very similar to the proof of Proposition 3.3.1. of \cite{kis06}. Let $B$ be an $E$-algebra, $I\subset B$ an ideal with $I^2=0$ and $h:\hat{A}^\tau_\mathfrak{m}\to B/I$ a map of $E$-algebras. Let $D_{B/I}$ be the object of $\mathfrak{Mod}_{\phi,N}(B/I)$ induced by $h$, and let $D_B$ be an object in $\mathfrak{Mod}_{\phi,N}(B)$ together with an isomorphism $D_B\otimes_{B}B/I\isoto D_{B/I}$. We need to show that this is induced by a map $\hat{A}^\tau_\mathfrak{m}\to B$ lifting $h$.
	
	As in the second paragraph of the proof of Proposition 3.3.1. of \cite{kis06}, we may assume that $B$ is a Noetherian complete local $E'$-algebra with residue field $E'$.
	
	Let $\mathfrak{m}_B$ be the maximal ideal of $B$. If $i\geq 1$ then $D_B\otimes_B B/\mathfrak{m}_B^i$ is an object of $\mathfrak{Mod}_{\phi,N}(B/\mathfrak{m}_B^i)$. It is admissible, as it is a repeated extension of $D_{B/I}\otimes_{B/I}B/\mathfrak{m}_B$. Then one can associate to it a finite free $B$-module $V_{B/\mathfrak{m}_B^i}$ of rank $d$, equipped with a continuous action of $G_K$, via the functor $\underline{\widehat{WD}}_{pst}$. Because $V_{B/\mathfrak{m}_B}$ has type $\tau$, we see that $V_{B/\mathfrak{m}_B^i}$ has type $\tau$ for all $i$.
	
	The result then follows as in the final paragraph of the proof of Proposition 3.3.1. of \cite{kis06}.
\end{proof}
We are now ready to prove the main result of this section. Fix an $\F$-basis for $V_\F$, and denote by $D_{V_\F}^\square$ the groupoid on the category of complete local $\bigO_E$-algebras with residue field $\F$, whose fibre over such a $B$ is an object $V_B$ of $D_{V_\F}(B)$ together with a lifting of the given basis for $V_\F$ to a $B$-basis for $V_B$. A morphism $V_B\to V_{B'}$ in $D_{V_\F}^\square$ covering $\epsilon:B\to B'$ is a $B'$-linear $G_K$-equivariant isomorphism $V_B\otimes_B B'\isoto V_{B'}$ sending the given basis of $V_B$ to that of $V_{B'}$.

Denote by $|D_{V_\F}^\square|$ the functor which to $B$ assigns the set of isomorphism classes of $D_{V_\F}^\square(B)$, and similarly for $|D_{V_\F}|$. Then $|D_{V_\F}^\square|$ is representable by a complete local $\bigO_E$-algebra $R_{V_\F}^\square$. If $\End_{\F[{G_K}]}V_\F=\F$, then $|D_{V_\F}|$ is representable by a complete local $\bigO_E$-algebra $R_{V_\F}$.
\begin{thm}
	\label{2.6}$\Spec (R_{V_\F}^\square[1/p])^\tau$ is equi-dimensional of dimension $d^2$ and admits a formally smooth, dense open subscheme. If $\End_{\F[{G_K}]}V_\F=\F$ then the same is true of $\Spec (R_{V_\F}[1/p])^\tau$, except that it is $1$-dimensional.
\end{thm}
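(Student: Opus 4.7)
\emph{Proof sketch.} The plan is to mirror the argument of Theorem 3.3.4 of \cite{kis06}, using Propositions \ref{2.4} and \ref{2.5} as the essential inputs. First I would fix a closed point $\mathfrak{m}$ of $\Spec(R_{V_\F}^\square[1/p])^\tau$ with residue field $E'$. Since $R_{V_\F}^\square$ is the framed universal deformation ring, the natural map $R_{V_\F}^\square\to D_{V_\F}$ is formally smooth, so Proposition \ref{2.5} implies that the induced morphism $\widehat{(R_{V_\F}^\square[1/p])^\tau_\mathfrak{m}}\to\mathfrak{Mod}_{\phi,N}$ is formally smooth. Proposition \ref{2.4} then produces, inside each such completion, a dense formally smooth open subscheme on which $H^2(D)=0$. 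Because closed points are dense in the Jacobson scheme $\Spec(R_{V_\F}^\square[1/p])^\tau$ and formal smoothness at a point can be tested on the completion, these assemble into a dense formally smooth open subscheme of $\Spec(R_{V_\F}^\square[1/p])^\tau$.

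Next I would compute the local dimension at a formally smooth point $x$ of this open. By Lemma \ref{2.1}(2) the tangent space of $\mathfrak{Mod}_{\phi,N}$ at $D_x$ is $H^1(D_x)$, while the framing contributes $d^2-\dim H^0(D_x)$ further dimensions (framings modulo the self-automorphisms of $D_x$). The complex $C^\bullet(D_x)$ computes the local Galois cohomology $H^\bullet(G_K,\ad V_x)$, and Tate's local Euler characteristic formula for $K/\Q_l$ with $l\neq p$ gives $\chi=0$. Using $H^2(D_x)=0$ this yields $\dim H^1(D_x)=\dim H^0(D_x)$, so
\[
\dim_x\Spec(R_{V_\F}^\square[1/p])^\tau=\dim H^1(D_x)+d^2-\dim H^0(D_x)=d^2.
\]
Since every irreducible component of $\Spec(R_{V_\F}^\square[1/p])^\tau$ meets the dense smooth locus, all components are $d^2$-dimensional, i.e.\ the scheme is equi-dimensional of dimension $d^2$.

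For the unframed statement, if $\End_{\F[G_K]}V_\F=\F$ then $R_{V_\F}^\square$ is a power series algebra in $d^2-1$ variables over $R_{V_\F}$ (the framings modulo scalars, which are the only endomorphisms commuting with the universal representation). Thus $\Spec(R_{V_\F}^\square[1/p])^\tau\to\Spec(R_{V_\F}[1/p])^\tau$ is an affine bundle of relative dimension $d^2-1$, and the equi-dimensionality of dimension $1$, together with a formally smooth dense open subscheme, descends.

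The main difficulty I anticipate is the bookkeeping involved in globalizing the completion-level statements of Propositions \ref{2.4} and \ref{2.5} to a genuine dense open of $\Spec(R_{V_\F}^\square[1/p])^\tau$, together with the careful identification of $C^\bullet(D_x)$ with the complex computing $H^\bullet(G_K,\ad V_x)$ so as to invoke the Euler characteristic formula. Both are essentially routine and handled analogously in \cite{kis06}, but should be verified directly in the present (simpler) Weil--Deligne setting.
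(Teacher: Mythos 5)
Your outline is essentially the paper's argument, and it is correct. The one place where you take an unnecessary detour is in establishing $\dim H^1(D_x)=\dim H^0(D_x)$: you appeal to Tate's local Euler characteristic formula, which requires first identifying the cohomology of $C^\bullet(D_x)$ with $H^\bullet(G_K,\ad V_x)$ in all degrees. Degrees $0$ and $1$ are handled by the equivalence of categories and by Lemma \ref{2.1}(2), but the degree-$2$ identification is not something the paper proves or needs, and it is not an immediate consequence of anything you've set up. The paper avoids this entirely: $C^\bullet(D_x)$ is the total complex of a $2\times 2$ square whose four entries are all $(\ad D_x)^{I_{L/K}}$, so $C^0$, $C^1$, $C^2$ have dimensions $a$, $2a$, $a$ and the Euler characteristic vanishes for trivial reasons, giving $\dim H^0-\dim H^1+\dim H^2=0$ and hence $\dim H^1=\dim H^0$ once $H^2(D_x)=0$. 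This is the only real divergence; your tangent space formula $\dim H^1(D_x)+d^2-\dim H^0(D_x)$ matches the paper's (which cites section 2.3.4 of \cite{kis04}), and your treatment of the unframed case as a formally smooth torsor of relative dimension $d^2-1$ over $R_{V_\F}$ is the standard observation the paper leaves implicit. I would replace the appeal to Tate's formula with the direct dimension count so that the argument is self-contained and does not rest on an unjustified comparison with Galois cohomology in degree $2$.
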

\begin{proof}
	We give the proof for $(R_{V_\F}^\square[1/p])^\tau$, the argument for $(R_{V_\F}[1/p])^\tau$ being very similar. Let $A= (R_{V_\F}^\square[1/p])^\tau$, and let $D_A$ denote the object of $\mathfrak{Mod}_{\phi,N}(A)$ corresponding to the universal representation over $(R^\square_{V_\F}[1/p])^\tau$. It follows from Propositions \ref{2.4} and \ref{2.5} that there is a smooth dense open subscheme $U$ of $\Spec A$ such that the support of $H^2(D_A)$ does not meet $U$.
	
	To compute the dimension of $A$, it suffices to compute the dimension of the tangent spaces at closed points of $U$. Let $x$ be a closed point of $U$ with residue field $E'$, a finite extension of $E$, and write $\mathfrak{m}$ for the corresponding maximal ideal of $A$, $V_x$ for the $G_K$-representation given by specialising the universal representation over $A$, and $D_x$ for the corresponding object of $\mathfrak{Mod}_{\phi,N}(E')$. Then the dimension of the tangent space at $\mathfrak{m}$ is, by the formula found in section 2.3.4. of \cite{kis04}, $$\dim_{E'}\Ext^1(V_x,V_x)+\dim_{E'}\ad_{E'}V_x-\dim_{E'}(\ad_{E'}V_x)^{G_K}.$$ By Lemma \ref{2.1} and the fact that $H^2(D_x)=0$ (as $x\in U$), we see that 
	\begin{align*}
		\dim_{E'}\Ext^1(V_x,V_x)&=\dim_{E'}H^1(D_x)\\
		&=\dim_{E'}H^0(D_x)\\&=\dim_{E'}(\ad_{E'}V_x)^{G_K}.
	\end{align*}
	Thus the dimension of the tangent space is $\dim_{E'}(\ad_{E'} V_x)=d^2$, as required.
\end{proof}

\section{Hilbert modular forms}\label{1}
\subsection{} Let $p>2$ be a prime. Let $F$ be a totally real field,
$S$ a finite set of places of $F$ containing the places of $F$
dividing $p$ and the infinite places, and let $\Sigma\subset S$ be the
set of finite places in $S$. Let $F(S)$ denote the maximal extension
of $F$ unramified outside of $S$, and write $G_{F,S}$ for $\Gal(F(S)/F)$.

Let $E/\Q_p$ be a finite extension with ring of integers $\bigO$ and
residue field $\F$, and let $\overline{\rho}:G_{F,S}\to\GL(V)$ be a
continuous representation on a $2$-dimensional $\F$-vector space
$V$. Assume that $\overline{\rho}|_{G_{F(\zeta_p)}}$ is absolutely
irreducible, and that $\overline{\rho}$ is modular. If $p=5$ and the
projective image of $\rhobar$ is $\PGL_2(\F_5)$, assume further that
$[F(\zeta_5):F]\neq 2$. In this section we use the improvements in
\cite{kiscdm} of the work of B\"{o}ckle and Khare-Wintenberger on
presentations of global universal deformation rings, and we prove a
result guaranteeing the existence of modular lifts of
$\overline{\rho}$ with certain local properties.

Fix a continuous character $\psi:G_{F,S}\to\bigO^{\times}$, so that the composite $\psi:G_{F,S}\to\bigO^{\times}\to\F^\times$ is equal to $\det V$. We fix algebraic closures $\overline{\Q}$ and $\overline{\Q}_l$ for all primes $l$, and embeddings $\overline{\Q}\into\overline{\Q}_l$. We regard $F$ as a subfield of $\overline{\Q}$, and in a slight abuse of notation we will also write $\psi$ for the characters $G_{F_v}\to\bigO^\times$ induced by the inclusions $G_{F_v}\into G_{F,S}$. 

Let $R_{F,S}$ denote the universal deformation $\bigO$-algebra of $V$ (which exists by our assumption that $\overline{\rho}|_{G_{F(\zeta_p)}}$ (and thus $\overline{\rho}$) is absolutely irreducible), and denote by $R^\psi_{F,S}$ the quotient corresponding to deformations of determinant $\psi$. Let $\ad^0V\subset\End_\F(V)$ be the subspace of endomorphisms of $V$ having trace zero.

For each $v\in\Sigma$ we fix a basis $\beta_v$ of $V$. Then the functor which assigns to a local Artin $\bigO$-algebra $A$ with residue field $\F$ the set of isomorphism classes of pairs $(V_A,\beta_{v,A})$ with $V_A$ a deformation of $V$ (considered as a $G_{F_v}$-representation) of determinant $\psi$, and $\beta_{v,A}$ a basis of $V_A$ lifting $\beta_v$, is representable by a complete local $\bigO$-algebra $R_v^{\square,\psi}$. Let $R_{F,S}^{\square,\psi}$ be the universal deformation $\bigO$-algebra parameterising tuples $(V_A,\{\beta_{v,A}\}_{v\in\Sigma})$ with $V_A$ a deformation of $V$ considered as a $G_{F,S}$-representation, and $\beta_{v,A}$ as above. Set $R^{\square,\psi}_\Sigma=\widehat{\otimes}_{\bigO,v\in\Sigma}R_v^{\square,\psi}$. Then we have
\begin{prop}
	\label{1.1}Let $s=\sum_{v|\infty}H^0(G_{F_v},\ad^0V)$. Then for some $r\geq 0$ and $f_1,\dots,f_{r+s}\in R_{\Sigma}^{\square,\psi}[[x_1,\dots,x_{r+|\Sigma|-1}]]$ there exists an isomorphism $$R_{F,S}^{\square,\psi}\isoto R_\Sigma^{\square,\psi}[[x_1,\dots,x_{r+|\Sigma|-1}]]/(f_1,\dots,f_{r+s}).$$
\end{prop}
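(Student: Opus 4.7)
The plan is to follow closely the presentation result of Kisin \cite{kiscdm}, which refines earlier work of B\"ockle and Khare-Wintenberger. The essential input is a Galois cohomology computation in the spirit of the classical Wiles estimate for Selmer groups.

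First, I would identify the relative tangent space of $R_{F,S}^{\square,\psi}$ over $R_\Sigma^{\square,\psi}$ with a Selmer-type subspace of $H^1(G_{F,S}, \ad^0 V)$: concretely, at each $v \in \Sigma$ one imposes no local condition, since the local framed deformation data is already encoded in $R_\Sigma^{\square,\psi}$. Call the dimension of this space $g$. Likewise, the obstructions to lifting a framed deformation from $A/I$ to $A$ (for $I \subset A$ a square-zero ideal) are captured by the Tate-dual Selmer group inside $H^1(G_{F,S}, \ad^0 V(1))$, and I denote its dimension by $h$. A standard Nakayama argument then yields a presentation
$$R_{F,S}^{\square,\psi} \cong R_\Sigma^{\square,\psi}[[x_1, \dots, x_g]]/(f_1, \dots, f_h).$$

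Second, I would apply the Greenberg-Wiles formula to compute $g - h$. The absolute irreducibility of $\overline{\rho}|_{G_{F(\zeta_p)}}$ (together with the auxiliary condition when $p=5$) ensures that $H^0(G_{F,S}, \ad^0 V) = H^0(G_{F,S}, \ad^0 V(1)) = 0$, so the two global $H^0$ terms drop out. After summing the local Euler-characteristic contributions at the finite and infinite places (using local Tate duality at $v \in \Sigma$) and accounting for the framing and fixed determinant, one obtains $g - h = |\Sigma| - 1 - s$. Given this invariant value of $g - h$, one may enlarge both $g$ and $h$ simultaneously by adjoining $r$ trivial pairs — a new variable $x_i$ together with the tautological relation $f = x_i$ — so as to arrange $g = r + |\Sigma| - 1$ and $h = r + s$ for some $r \geq 0$. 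This puts the presentation in the exact form claimed.

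The main obstacle is the careful identification of the relative tangent and obstruction spaces as Selmer groups with the correct local conditions, and the matching of the Greenberg-Wiles output to $|\Sigma| - 1 - s$; the determinant-twist bookkeeping responsible for the $-1$ (the passage from $\ad V$ to $\ad^0 V$, combined with the framing at each $v \in \Sigma$) is the combinatorially delicate piece. All of this is carried out in \cite{kiscdm}, and the proposition is essentially a direct application of that calculation, repackaged so as to be usable in the Taylor-Wiles patching arguments of the subsequent subsection.
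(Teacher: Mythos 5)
Your proof is correct and follows essentially the same route as the paper, which simply invokes Proposition 4.1.5 of \cite{kiscdm} after noting that absolute irreducibility of $\rhobar|_{G_{F(\zeta_p)}}$ forces $H^0(G_{F,S},(\ad^0V)^*(1))=0$ (the one hypothesis needed to apply that result). You have unpacked the contents of the cited proposition — the Selmer-group identification of the relative tangent and obstruction spaces and the Greenberg--Wiles Euler characteristic count — but the underlying argument is identical to the one the paper outsources to \cite{kiscdm}.
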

\begin{proof}
	This follows at once from Proposition 4.1.5. of \cite{kiscdm}, because the assumption that $\rhobar|_{G_{F(\zeta_p)}}$ is absolutely irreducible implies that $H^0(G_{F,S},(\ad^0V)^*(1))=0$.
\end{proof}
We now consider more refined deformation conditions at the places
$v\in\Sigma$; specifically, we consider deformations of specific
Galois type. Let $\epsilon$ denote the $p$-adic cyclotomic character
(which, with a slight abuse of notation, we will regard as a character
of various local or global absolute Galois groups). For each
$v\in\Sigma$ fix a Galois type $\tau_v$; that is, a representation
$\tau_v:I_{F_v}\to\GL_2(\Qpbar)$ with open kernel, that extends to a
representation of $G_{F_v}$. If $v|p$, we attach to any 2-dimensional
potentially semi-stable (so, in particular, any potentially
Barsotti-Tate) representation $\rho:G_{F_v}\to\GL_2(\Qpbar)$ a 2
dimensional $\Qpbar$-representation $WD(\rho)$ of the Weil-Deligne
group of $F_v$ (we attach this representation covariantly, as in
Appendix B of \cite{cdt}). We say that $\rho$ is potentially
Barsotti-Tate of type $\tau_v$ if it is potentially Barsotti-Tate, has
determinant a finite order character times the cyclotomic character, and
$WD(\rho)|_{I_{F_{v}}}$ is equivalent to $\tau_v$. Note that by Lemma
2.2.1.1 of \cite{bm}, any representation $\rho$ or $\tau_v$ as above
is actually defined over a finite extension of $\Qp$.

Suppose now that there is a finite order character $\chi:G_{F,S}\to\bigO^\times$ such that $\psi=\chi\epsilon$. Assume that for each $v\in\Sigma$ we have $\det\tau_v=\chi|_{I_{F_v}}$.
\begin{prop}
	\label{1.2}For each $v$ dividing $p$, there exists a unique quotient $R_v^{\square,\psi,\tau_v}$ of $R_v^{\square,\psi}$ such that: 
	\begin{enumerate}
		\item $R_v^{\square,\psi,\tau_v}$ is $p$-torsion free, reduced, and all its components are $4+[F_v:\Q_p]$-dimensional. 
		\item If $E'/E$ is a finite extension, then a map $x:R_v^{\square,\psi}\to E'$ factors through $R_v^{\square,\psi,\tau_v}$ if and only if the corresponding $E'$-representation $V_x$ is potentially Barsotti-Tate of type $\tau_v$. 
	\end{enumerate}
\end{prop}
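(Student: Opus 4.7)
The plan is to deduce the proposition from Kisin's construction of potentially Barsotti-Tate deformation rings in \cite{kis06}. After possibly enlarging $E$ using Lemma 2.2.1.1 of \cite{bm}, one may assume that every potentially Barsotti-Tate representation of type $\tau_v$ defined over $\Qpbar$ is already defined over $E$. The locus in $\Spec R_v^{\square,\psi}[1/p]$ parametrising such representations is a union of irreducible components: being potentially semi-stable with Hodge-Tate weights $\{0,1\}$ is a closed condition on the generic fibre, while the inertial type is locally constant on the nilpotent locus by Lemma 2.2.1.1 of \cite{bm}. I would then define $R_v^{\square,\psi,\tau_v}$ to be the unique $p$-torsion free reduced quotient of $R_v^{\square,\psi}$ whose support in the generic fibre is this union; property (2) is then built in tautologically, and uniqueness is immediate because a $p$-torsion free reduced quotient is determined by its characteristic-zero closed points.

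Next I would invoke Kisin's resolution of $\Spec R_v^{\square,\psi,\tau_v}$ by the projective moduli scheme $\M(\tau_v)$ of Breuil-Kisin modules equipped with $(L/F_v)$-descent data realising $\tau_v$, where $L/F_v$ is a finite extension trivialising $\tau_v$. The inputs from \cite{kis06} that I need are that $\M(\tau_v)$ is flat over $\bigO$, formally smooth of the expected dimension on its generic fibre, and that the natural projective map $\M(\tau_v)\to\Spec R_v^{\square,\psi,\tau_v}$ is an isomorphism over a dense open subset of the generic fibre. These properties transport to the target and give $p$-torsion freeness and reducedness automatically.

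The main obstacle is the dimension count. Following the strategy of the proof of Theorem \ref{2.6}, I would verify equidimensionality by computing tangent spaces at closed points of a smooth dense open subscheme of $\Spec R_v^{\square,\psi,\tau_v}[1/p]$. At such a point $x$ with residue field $E'$ and representation $V_x$, the tangent space dimension is
\[ \dim_{E'}\Ext^1_{G_{F_v},\psi,\mathrm{BT},\tau_v}(V_x,V_x) + \dim_{E'}\ad_{E'} V_x - \dim_{E'}(\ad_{E'} V_x)^{G_{F_v}}, \]
where the $\Ext^1$ parametrises potentially Barsotti-Tate deformations of type $\tau_v$ with fixed determinant $\psi$. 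By local Euler-Poincar\'e, local Tate duality, and the fact that $V_x$ has Hodge-Tate weights $\{0,1\}$, this reduces to $3+[F_v:\Q_p]$ for the generic fibre, and adding $1$ for the $p$-adic direction yields $\dim R_v^{\square,\psi,\tau_v}=4+[F_v:\Q_p]$, as required.
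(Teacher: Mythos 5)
The paper's own proof of Proposition~\ref{1.2} is a one-line citation: ``This follows at once from Theorem~3.3.8 of \cite{kis06}.'' Your proposal instead tries to reconstruct that theorem, which is a much heavier task, and two of its steps fail.

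First, the claim in your opening paragraph that the potentially Barsotti--Tate-of-type-$\tau_v$ locus in $\Spec R_v^{\square,\psi}[1/p]$ is a union of irreducible components is false. For $v\mid p$ and $d=2$ the full framed deformation ring (with fixed determinant) has relative dimension $3+3[F_v:\Q_p]$ over $\bigO$, while the target quotient is supposed to have relative dimension $3+[F_v:\Q_p]$; this strict drop already rules out ``union of components'' whenever $[F_v:\Q_p]>0$. The analogous union-of-components statement is true in the $\ell\neq p$ setting (Theorem~\ref{2.6} of this paper) because there one is only imposing a condition on the inertial Weil--Deligne type and the cohomology of the finite group $I_{L/K}$ in characteristic zero vanishes; at $p$ one is in addition imposing the genuinely restrictive de~Rham/crystalline condition together with a Hodge type, and that is not a condition that can be read off from a neighbourhood. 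Relatedly, Lemma~2.2.1.1 of \cite{bm} is a rationality statement (any such $\rho$ or $\tau_v$ is defined over a finite extension of $\Qp$); it does not give local constancy of the inertial type, and ``nilpotent locus'' is not a notion that helps here. Kisin constructs $R_v^{\square,\psi,\tau_v}$ not by carving out components of $R_v^{\square,\psi}[1/p]$ but as the scheme-theoretic image of the flat $\bigO$-scheme $\mathcal{M}(\tau_v)$ (his $\mathscr{GR}^{\tau,v}$); the assertion that the pBT-of-type-$\tau$ points form a Zariski-closed subset of the generic fibre is an output of that construction, not an input.

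Second, the dimension count in your final paragraph is too vague to stand. The Ext-group you write down parametrises pBT deformations of fixed type and determinant, and neither the local Euler--Poincar\'e formula nor Tate duality applies to it directly; they govern the full $H^\bullet(G_{F_v},\ad^0 V_x)$, not the subspace cut out by the pBT/type/Hodge conditions. Kisin's actual computation proceeds through the formal smoothness of $R_v^{\square,\psi,\tau_v}[1/p]$ over the moduli of weakly admissible filtered $(\varphi,N,L/F_v)$-modules and an explicit analysis of how the Hodge filtration contributes $[F_v:\Q_p]$ to the tangent space; that analysis is precisely the extra ingredient relative to the filtration-free $\ell\neq p$ case treated in Theorem~\ref{2.6}, and it cannot be replaced by a bare Euler-characteristic count. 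If you want to prove Proposition~\ref{1.2} rather than cite it, you need to reproduce that filtration analysis; as written, the proposal asserts the answer without a mechanism that yields it.
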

\begin{proof}
	This follows at once from Theorem 3.3.8. of \cite{kis06}.
\end{proof}
Using the classification of irreducible components of $R_v^{\square,\psi,\tau_v}$ for the case $\tau_{v}=1$ (specifically, Corollary 2.5.16(2) of \cite{kis04}) one easily sees that each irreducible component of $R_v^{\square,\psi,\tau_v}$ corresponds either only to potentially ordinary representations or only to representations which are not potentially ordinary. Call these components ordinary, non-ordinary respectively.
\begin{prop}
	\label{1.3}For each $v$ not dividing $p$, there exists a unique quotient $R_v^{\square,\psi,\tau_v}$ of $R_v^{\square,\psi}$ such that: 
	\begin{enumerate}
		\item $R_v^{\square,\psi,\tau_v}$ is $p$-torsion free, reduced, and all its components are $4$-dimensional. 
		\item If $E'/E$ is a finite extension, then a map $x:R_v^{\square,\psi}\to E'$ factors through $R_v^{\square,\psi,\tau_v}$ if and only if the corresponding $E'$-representation $V_x$ is of type $\tau_v$. 
	\end{enumerate}
\end{prop}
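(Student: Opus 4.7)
The plan is to mirror the reduction of Proposition~\ref{1.2} to Theorem 3.3.8 of \cite{kis06}, with Theorem~\ref{2.6} playing the role of that local result. First, I would apply Theorem~\ref{2.6} (with $d = 2$) to the framed deformation ring $R^\square_v$ of $V|_{G_{F_v}}$ without fixing the determinant, obtaining the quotient $(R^\square_v[1/p])^{\tau_v}$, which is equi-dimensional of dimension $d^2 = 4$, reduced (since it admits a dense formally smooth open subscheme), and whose $E'$-valued points are precisely those framed deformations $V_x$ with $V_x$ of type $\tau_v$.

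Next, I would impose the fixed-determinant condition. Since $v \nmid p$, the cyclotomic character is unramified at $v$, so the hypothesis $\det\tau_v = \chi|_{I_{F_v}}$ becomes $\psi|_{I_{F_v}} = \det\tau_v$, ensuring compatibility. Sending a framed deformation to its determinant gives a morphism from $\Spec(R^\square_v[1/p])^{\tau_v}$ to the deformation space of $\det V|_{G_{F_v}}$; along the type-$\tau_v$ locus the inertial part of the determinant is pinned to $\det\tau_v$, so this morphism is formally smooth with fibres of dimension one less. Hence the fibre over $\psi$, namely
\[
(R^{\square,\psi}_v[1/p])^{\tau_v} := (R^\square_v[1/p])^{\tau_v} \otimes_{R^\square_v[1/p]} R^{\square,\psi}_v[1/p],
\]
is equi-dimensional of dimension $3$, reduced, and still admits a dense formally smooth open subscheme.

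Finally, I would take $R^{\square,\psi,\tau_v}_v$ to be the image of $R^{\square,\psi}_v$ in this reduced $\Q_p$-algebra; as a subring of a reduced characteristic-zero ring it is automatically $p$-torsion free and reduced, and its generic fibre being $3$-dimensional forces its Krull dimension to be $4$, with all irreducible components of dimension $4$. Property (2) is then immediate from the $E'$-point characterisation of the type-$\tau_v$ quotient in Theorem~\ref{2.6}, and uniqueness follows by the standard observation that a $p$-torsion free reduced quotient of $R^{\square,\psi}_v$ is determined by its $E'$-valued points for $E'/E$ finite. The main obstacle is the verification of the codimension-one dimension drop upon fixing the determinant, i.e.\ the formal smoothness of the determinant map along the type-$\tau_v$ locus; this comes down to the deformation-theoretic fact that along this locus the inertial component of the determinant is rigid (fixed to $\det\tau_v$), so only the Frobenius value of the determinant varies, giving exactly a one-parameter family of lifts.
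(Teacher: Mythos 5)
Your proposal correctly identifies that Theorem~\ref{2.6} is stated for the framed deformation ring with unrestricted determinant, whereas the proposition concerns $R^{\square,\psi}_v$, and that the reduction goes through the determinant morphism. The construction of the quotient as the image of $R^{\square,\psi}_v$, the automatic $p$-torsion-freeness and reducedness of a subring of a reduced characteristic-zero ring, the passage from a $3$-dimensional generic fibre to a $4$-dimensional $\bigO$-algebra, and the uniqueness argument are all correct; this is exactly what the paper's one-line proof (``This follows from Theorem~\ref{2.6}.'') leaves implicit.

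The gap is in the justification of formal smoothness of the determinant morphism along the type-$\tau_v$ locus. You write that this ``comes down to'' the fact that only the Frobenius value of the determinant varies, but rigidity of the inertial part only bounds the image of the determinant map; it does not by itself supply the lifting property that formal smoothness requires. The standard proof is by unramified twisting: given a square-zero extension $B\twoheadrightarrow B/I$ of $E$-algebras, a type-$\tau_v$ lift $V_{B/I}$, and a prescribed lift $\chi_B$ of $\det V_{B/I}$ to $B$, first choose some type-$\tau_v$ lift $V_B'$ of $V_{B/I}$ (possible since $\Spec(R^\square_v[1/p])^{\tau_v}$ is formally smooth over $\Q_p$ on a dense open, by Theorem~\ref{2.6}); then $\alpha:=\chi_B\cdot(\det V_B')^{-1}$ is a character $G_{F_v}\to 1+I$ which is unramified precisely because both determinants restrict to $\det\tau_v$ on inertia, and since $p$ is odd and $1+I$ is uniquely $2$-divisible there is a unique, still unramified, square root $\alpha^{1/2}$; the twist $V_B:=V_B'\otimes\alpha^{1/2}$ is then a type-$\tau_v$ lift of $V_{B/I}$ with determinant $\chi_B$. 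This same argument also justifies the reducedness of the fibre you assert (fibres of a smooth morphism over a reduced base are reduced), rather than merely the possibly non-reduced tensor product you initially write down. With this step supplied, your proof is complete.
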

\begin{proof}
	This follows from Theorem \ref{2.6}.
\end{proof}

Note that the rings $R_v^{\square,\psi,\tau_v}$ could be zero; in applications, one must check that they are nonzero (e.g. by exhibiting an appropriate deformation).

Assume now that each $R_v^{\square,\psi,\tau_v}$ is nonzero, and for
each $v\in\Sigma$ let $\overline{R}_v^{\square,\psi,\tau_v}$ denote a
quotient of $R_v^{\square,\psi,\tau_v}$ corresponding to some
irreducible component. Set
$R_\Sigma^{\square,\psi,\tau}=\widehat{\otimes}_{\bigO,v\in\Sigma}\overline{R}_v^{\square,\psi,\tau_v}$,
and
$R_{F,S}^{\square,\psi,\tau}=R_{F,S}^{\square,\psi}\otimes_{R_\Sigma^{\square,\psi}}R_\Sigma^{\square,\psi,\tau}$. Because
$\overline{\rho}$ is irreducible, there is a corresponding quotient of
$R_{F,S}^\psi$, which we denote $R_{F,S}^{\psi,\tau}$. This is the
image of $R^\psi_{F,S}$ in $R^{\square,\psi,\tau}_{F,S}$.
\begin{prop}
	\label{1.4}For some $r\geq 0$ there is an isomorphism $$R_{F,S}^{\square,\psi,\tau}\isoto R_\Sigma^{\square,\psi,\tau}[[x_1,\dots,x_{r+|\Sigma|-1}]]/(f_1,\dots,f_{r+[F:\Q]}).$$ In addition, $\dim R_{F,S}^{\square,\psi,\tau}\geq 4|\Sigma|$ and $\dim R_{F,S}^{\psi,\tau}\geq 1$. 
\end{prop}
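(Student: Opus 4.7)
The plan is to deduce the presentation from Proposition \ref{1.1} by base change along the surjection $R_\Sigma^{\square,\psi}\twoheadrightarrow R_\Sigma^{\square,\psi,\tau}$, and then to extract both dimension bounds from the Krull dimensions of the local factors.

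First I would evaluate the integer $s$ appearing in Proposition \ref{1.1}. Since $\rhobar$ is modular and $F$ is totally real, $\rhobar$ is odd, so at each infinite place $v$ the complex conjugation $c_v$ acts on $V$ with eigenvalues $\{1,-1\}$ and hence on $\ad^0V$ with eigenvalues $\{1,-1,-1\}$. Thus $H^0(G_{F_v},\ad^0V)$ is one-dimensional, and $s=[F:\Q]$. Substituting this into Proposition \ref{1.1} and tensoring the resulting presentation of $R_{F,S}^{\square,\psi}$ over $R_\Sigma^{\square,\psi}$ with $R_\Sigma^{\square,\psi,\tau}$ yields the desired presentation of $R_{F,S}^{\square,\psi,\tau}$, since by definition the latter is precisely this base change.

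Next I would compute $\dim R_\Sigma^{\square,\psi,\tau}$ using Propositions \ref{1.2} and \ref{1.3}: each component quotient $\overline{R}_v^{\square,\psi,\tau_v}$ is a flat complete Noetherian local $\bigO$-algebra with residue field $\F$, of Krull dimension $4+[F_v:\Q_p]$ if $v\mid p$ and $4$ otherwise. Writing each as a quotient of a power series ring over $\bigO$ in disjoint sets of variables, Cohen--Macaulayness of the ambient regular ring gives $\dim(A\,\widehat{\otimes}_\bigO\,B)=\dim A+\dim B-1$, and iterating yields
\[
\dim R_\Sigma^{\square,\psi,\tau}=\sum_{v\mid p}(4+[F_v:\Q_p])+\sum_{\substack{v\in\Sigma\\ v\nmid p}}4-(|\Sigma|-1)=3|\Sigma|+[F:\Q]+1.
\]
Applying Krull's principal ideal theorem to the presentation then forces
\[
\dim R_{F,S}^{\square,\psi,\tau}\geq(3|\Sigma|+[F:\Q]+1)+(r+|\Sigma|-1)-(r+[F:\Q])=4|\Sigma|.
\]

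Finally, I would invoke the standard fact that the absolute irreducibility of $\rhobar$ makes $R_{F,S}^{\square,\psi,\tau}$ formally smooth over $R_{F,S}^{\psi,\tau}$ of relative dimension $|\Sigma|d^2-\dim_\F H^0(G_{F,S},\ad V)=4|\Sigma|-1$, giving $\dim R_{F,S}^{\psi,\tau}\geq 4|\Sigma|-(4|\Sigma|-1)=1$. The most delicate point I expect is the equality in the completed tensor product dimension formula, which relies on the disjointness of variables across places together with Cohen--Macaulayness of the ambient power series rings; the remainder is routine bookkeeping built on Propositions \ref{1.1}--\ref{1.3} and Theorem \ref{2.6}.
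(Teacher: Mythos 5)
Your proposal is correct and follows essentially the same route as the paper's proof: evaluate $s=[F:\Q]$ via oddness, base-change the presentation of Proposition \ref{1.1} along $R_\Sigma^{\square,\psi}\twoheadrightarrow R_\Sigma^{\square,\psi,\tau}$, compute $\dim R_\Sigma^{\square,\psi,\tau}=3|\Sigma|+[F:\Q]+1$ from Propositions \ref{1.2} and \ref{1.3}, apply Krull's height bound, and finish with the $4|\Sigma|-1$ formal smoothness of the framed over the unframed ring. The only differences are cosmetic — you spell out the eigenvalue count for $\ad^0V$ and the completed-tensor-product dimension formula, which the paper treats as routine.
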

\begin{proof}
	Note that $\rhobar$ is odd (because it is assumed modular), so that the integer $s$ in Proposition \ref{1.1} is $\sum_{v|\infty}1=[F:\Q]$. From Propositions \ref{1.2} and \ref{1.3} we have 
	\begin{align*}
		\dim R_{\Sigma}^{\square,\psi,\tau} &=\sum_{v\in\Sigma}\dim R_{v}^{\square,\psi,\tau}-(|\Sigma|-1)\\
		&=4|\Sigma|+\sum_{v|p}[F_v:\Q_p]-(|\Sigma|-1)\\&=3|\Sigma|+[F:\Q]+1.
	\end{align*}
	Thus the dimension of $R_{F,S}^{\square,\psi,\tau}$ is at least $$\dim R_{\Sigma}^{\square,\psi,\tau}+(r+|\Sigma|-1)-(r+[F:\Q])=4|\Sigma|.$$ The morphism $R_{F,S}^{\psi,\tau}\to R_{F,S}^{\square,\psi,\tau}$ is formally smooth of relative dimension $4|\Sigma|-1$, so $$\dim R_{F,S}^{\psi,\tau}=\dim R_{F,S}^{\square,\psi,\tau}-(4|\Sigma|-1)\geq 1.$$ 
\end{proof}
We now use an $R=T$ theorem to show that $R_{F,S}^{\psi,\tau}$ is finite  over $\bigO$ of $\bigO$-rank at least 1. This allows us to prove the existence of minimal liftings of specified type.
\begin{prop}
	\label{1.5}Assume that $R_{\Sigma}^{\square,\psi,\tau}\neq 0$. Assume also that the following hypothesis is satisfied: \\
	\\
	\emph{(ord)} Let $Z$ be the set of places $v$ dividing $p$ such that $\overline{R}_{v}^{\square,\psi,\tau_v}$ is ordinary. Then $\overline{\rho}\cong\overline{\rho}_{f}$ for $f$ a Hilbert modular form of parallel weight $2$, with $f$ potentially ordinary at all places in $Z$, and the corresponding automorphic representation $\pi_{f}$ not special at any place dividing $p$.\\
	\\
	Then $R_{F,S}^{\psi,\tau}$ is a finite $\bigO$-module of rank at least $1$. 
\end{prop}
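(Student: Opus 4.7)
The plan is to prove an $R=T$ theorem for $R_{F,S}^{\psi,\tau}$ by the Taylor-Wiles-Kisin patching method of \cite{kis06} and \cite{kiscdm}, from which the conclusion follows immediately.

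First, via Jacquet-Langlands I would transfer the problem to a totally definite quaternion algebra $D/F$, ramified at all infinite places and split at all finite places in $\Sigma$, and consider a Hecke module $M$ of parallel weight-$2$ automorphic forms on $D^\times$ with level structure tailored to realise the types $\tau_v$ at $v\in\Sigma$ and the chosen irreducible components $\overline{R}_v^{\square,\psi,\tau_v}$. The Galois representations attached to Hilbert modular forms (by the work of Carayol, Taylor, and others) give a natural surjection $R_{F,S}^{\psi,\tau}\twoheadrightarrow T^{\psi,\tau}$ onto the Hecke algebra acting faithfully on $M$. Hypothesis (ord) ensures $M\neq 0$: the Jacquet-Langlands transfer of $\pi_f$ to $D^\times$ exists because $\pi_f$ is not special at primes above $p$, providing a classical eigenform lifting $\rhobar$, and the potential ordinarity of $f$ at each $v\in Z$ places its associated Galois representation on the selected ordinary component.

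Next, I would apply Kisin's patching machinery. Using the Taylor-Wiles hypothesis that $\rhobar|_{G_{F(\zeta_p)}}$ is absolutely irreducible (with the $p=5$, $\PGL_2(\F_5)$-image case excluded by the assumption on $[F(\zeta_5):F]$), choose sequences $Q_n$ of Taylor-Wiles primes and patch the auxiliary Hecke modules to produce $M_\infty$ over a patched ring $R_\infty$, which by construction is a power series ring over $R_\Sigma^{\square,\psi,\tau}$ in $g+|\Sigma|-1$ variables for an appropriate Selmer-theoretic $g$. The module $M_\infty$ is finite free over an auxiliary power series ring $S_\infty=\bigO[[z_1,\ldots,z_N]]$ whose action factors through $R_\infty$. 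The dimension estimate of Proposition \ref{1.4} together with the standard count on the $S_\infty$ side yields $\dim R_\infty=\dim S_\infty$, so $M_\infty$ has full support in $\Spec R_\infty$; since each $\overline{R}_v^{\square,\psi,\tau_v}$ is an irreducible component, $R_\infty$ is irreducible, forcing $R_\infty$ to act faithfully on $M_\infty$ through the patched Hecke algebra $T_\infty$.

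Unpatching (killing the auxiliary Taylor-Wiles and framing variables) identifies the surjection $R_{F,S}^{\psi,\tau}\twoheadrightarrow T^{\psi,\tau}$ with an isomorphism and exhibits $T^{\psi,\tau}$ as acting faithfully on the finite $\bigO$-module $M$. Thus $R_{F,S}^{\psi,\tau}\cong T^{\psi,\tau}$ is finite over $\bigO$; and its $\bigO$-rank is at least $1$ because the finite freeness of $M_\infty$ over $S_\infty$ produces nonzero $\Qpbar$-valued points, which descend to $\Qpbar$-valued points of $R_{F,S}^{\psi,\tau}$, i.e.\ characteristic-zero modular lifts of $\rhobar$ of type $\tau$.

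The main obstacle is the nonvanishing of $M_\infty$ on the chosen irreducible component of $R_\infty$. Hypothesis (ord) provides only a form $f$ of parallel weight $2$, which need not itself have type $\tau$ at the places in $\Sigma$; its role is to seed the patching procedure with a nonzero Hecke module at the level of the residual representation, and to fix the ordinary-versus-non-ordinary distinction at each $v\in Z$ so that the chosen component is compatible with an actual automorphic source. The dimension-counting of Proposition \ref{1.4}, together with the irreducibility of each $\overline{R}_v^{\square,\psi,\tau_v}$, then forces the chosen irreducible component of the local deformation rings to be hit by $M_\infty$, delivering automorphic lifts of the exact prescribed type even though no such lift was assumed at the outset.
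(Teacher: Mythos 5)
Your proposal diverges from the paper's argument at the crucial step. The paper does \emph{not} attempt a direct patching argument with the types $\tau_v$ at $\Sigma$. Instead, after observing that finiteness as an $\bigO$-algebra suffices (since Proposition \ref{1.4} then gives rank $\geq 1$), it invokes solvable base change: one chooses $F'/F$ solvable so that the types $\tau_v$ become trivial over $F'$, $\rhobar|_{G_{F'}}$ is everywhere unramified, $\chi$ trivialises at $p$, and --- using (ord) together with the arguments of Corollary 3.1.6 of \cite{kis04} --- a congruent form $f_{F'}$ exists which is ordinary at exactly the places over $Z$. Over $F'$ the relevant local conditions reduce to Barsotti-Tate with a fixed (ordinary or non-ordinary) component at places over $p$ and unramified-or-Steinberg at places in $\Sigma'$, which is precisely the setting where the $R = T$ theorems of \cite{kis04} apply. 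Finiteness of $\overline{R}_{F',S'}$ over $\bigO$ then propagates to $R^{\psi,\tau}_{F,S}$ because the latter is finite over the former by the argument of Lemma 2.1.12 of \cite{cht} / Lemma 3.6 of \cite{kw}.

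Your direct-patching approach has a genuine gap in the step you yourself flag as ``the main obstacle.'' You assert that dimension counting plus irreducibility of each $\overline{R}_v^{\square,\psi,\tau_v}$ ``forces the chosen irreducible component of the local deformation rings to be hit by $M_\infty$.'' This is not so. If the Hecke module is built with the type-$\sigma(\tau_v)$ level structure at $v\in\Sigma$ (as in \cite{cdt}), then $M_\infty$ being nonzero is \emph{equivalent} to the existence of an automorphic lift of the prescribed type --- exactly the statement being proved --- so (ord) alone does not provide it, and dimension counting cannot conjure up a nonzero module. If instead one builds the Hecke module at an untyped level so that modularity of $\rhobar$ gives $M_\infty\neq 0$, then $M_\infty$ is a module over a power series ring over the untyped $\widehat{\otimes}_v R_v^{\square,\psi}$, and the dimension count only shows that $\operatorname{supp}M_\infty$ is \emph{some} union of irreducible components; nothing forces the particular component corresponding to $\tau$ and the chosen ordinary/non-ordinary branches to be among them. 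Faithfulness of $R_\infty$ on $M_\infty$ (``$M_\infty$ has full support'') needs $R_\infty$ irreducible \emph{and} $M_\infty\neq 0$ over that irreducible ring --- and the latter is precisely the automorphy-of-prescribed-type statement in disguise. The solvable base change in the paper is exactly the device that breaks this circularity, by replacing the prescribed type with the trivial type (and unramified/Steinberg away from $p$), where level-raising at Steinberg places is available and the nonvanishing input is supplied by (ord) and a congruent form over $F'$. You should also be careful that after base change the paper appeals to \cite{gee052} for the two-component structure of $R_{v'}^{\square,0,1}[1/p]$ at $v'|p$, which is the ingredient controlling which ordinary/non-ordinary component is selected; your sketch does not confront this point concretely.
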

\begin{proof}
	We note that it suffices to show that $R_{F,S}^{\psi,\tau}$ is
        a finite $\bigO$-algebra; if this holds, then if
        $R_{F,S}^{\psi,\tau}$ had rank $0$ as an $\bigO$-module then
        it would have dimension $0$, contradicting Proposition
        \ref{1.4} (note that $R_{F,S}^{\psi,\tau}$ is certainly
        nonzero, as it is a completed tensor product of nonzero $\mathcal{O}$-algebras).
	
	We may now (e.g. as in the proof of Theorem 3.5.5 of \cite{kis04}) choose a solvable extension $F'/F$ so that: 
	\begin{enumerate}
		\item The base change of $\pi_{f}$ to $F'$, denoted $\pi_{f_{F'}}$, is unramified or special at every finite place of $F'$.
                \item If $v'$ is a place of $F'$ lying over a place
                  $v$ in $\Sigma$, then $\tau_v|_{I_{F'_{v'}}}$ and
                  $\chi|_{G_{F'_{v'}}}$ are trivial. 
		\item $\overline{\rho}|_{G_{F'_{v'}}}$ and
                  $\chi|_{G_{F'_{v'}}}$ are trivial at all places $v'|p$ of $F'$. 
		\item $[F'(\zeta_p):F']=[F(\zeta_p):F]$, $[F':\Q]$ is even, and $\overline{\rho}|_{G_{F'(\zeta_p)}}$ is absolutely irreducible. 
		\item $\overline{\rho}|_{G_{F'}}$ is unramified at all places. 
	\end{enumerate}
	After possibly making a further base change and replacing the
        Hilbert modular form $f_{F'}$ corresponding to $\pi_{f_{F'}}$
        with a congruent form, the argument of Corollary 3.1.6 of
        \cite{kis04} (and the use of a Hecke algebra containing the
        Hecke operators at places dividing $p$) shows that we may assume that $f_{F'}$ is ordinary at precisely the places lying over places in $Z$.
	Let $S'$ denote the set of places of $F'$ lying over places in
        $S$, and $\Sigma'$ denote the set of places of $F'$ lying over
        places in $\Sigma$. Then as in the proof of Theorem 4.2.8. of
        \cite{kiscdm} (or the proof of Lemma 3.6 of \cite{kw}), $R_{F,S}$ is a finite $R_{F',S'}$-algebra.
	
	We now define a framed deformation ring which captures the
        base changes to $F'$ of the deformations parameterised by
        $R_{F,S}^{\psi,\tau}$. Restriction gives a basis $\beta_{v'}$
        of $V$ for each place $v'\in\Sigma'$. For each place
        $v'\in\Sigma'$ of $F'$ we let $R_{v'}^\square$ denote the
        universal framed deformation ring for
        $\overline{\rho}|_{G_{F_{v'}}}$. If $v'|p$, let
        $R_{v'}^{\square,0,1}$ be the quotient considered in
        \cite{kis04}; so if $E'$ is a finite extension of $E$, then a
        map $x:R_{v'}^\square\to E'$ factors through
        $R_{v'}^{\square,0,1}$ if and only if the corresponding Galois
        representation $V_x$ is Barsotti-Tate with Hodge-Tate weights
        equal to $0$, $1$, and $\det V_x$ is the cyclotomic character.
	
	It is shown in \cite{gee052} that $\Spec
        R_{v'}^{\square,0,1}[1/p]$ has precisely $2$ components, one
        corresponding to ordinary representations, and one to
        non-ordinary representations. Let
        $\overline{R}_{v'}^{\square,0,1}$ denote the quotient of
        $R_v^{\square,0,1}$ corresponding to the closure of the
        non-ordinary component if $v$ does not lie over a place in
        $Z$, and to the closure of the ordinary component if $v$ lies
        over a place in $Z$. If $v'\nmid p$ but $v'\in\Sigma'$, then
        let $\overline{R}_{v'}^{\square}$ be the quotient of
        ${R}_{v'}^{\square,\psi}$ corresponding to unramified
        representations, unless there are representations factoring
        through $\overline{R}_{v}^{\square,\psi,\tau}$ which are not
        potentially unramified, in which case let
        $\overline{R}_{v'}^{\square}$ be the quotient of
        ${R}_{v'}^{\square,\psi}$ defined in corollary 2.6.6 of
        \cite{kis04}; so if $E'$ is a finite extension of $E$, then a
        map $x:R_{v'}^\square\to E'$ factors through
        $R_{v'}^{\square}$ if and only if the corresponding Galois
        representation $V_x$ is an extension of the trivial character
        by the cyclotomic character. In either case
        $\overline{R}_{v'}^{\square}$ is a domain.
	
	Let $R_{F',S'}^\square$ denote the deformation ring which is
        universal for tuples $(V_A,\{\beta_{v',A}\}_{v'\in\Sigma'})$,
        where $V_A$ is a deformation of the $G_{F',S'}$-representation
        $\overline{\rho}|_{G_{F',S'}}$ with determinant $\psi$, which
        is unramified at all places $v'\notin\Sigma'$, together with a
        basis $\beta_{v',A}$ of $V_A$ lifting $\beta_{v'}$ for all
        $v'\in\Sigma'$. Let
        $R^\square_{\Sigma'}:=\widehat{\otimes}_{\bigO,v'\in\Sigma'}R^{\square}_{v'}$,
        $\overline{R}^\square_{\Sigma'}:=\widehat{\otimes}_{\bigO,v'\in\Sigma'}\overline{R}^\square_{v'}$. Set
        $\overline{R}^\square_{F',S'}:=R^\square_{F',S'}\otimes_{R^\square_{\Sigma'}}\overline{R}^\square_{\Sigma'}$,
        and let $\overline{R}_{F',S'}$ be the corresponding (unframed)
        deformation ring, so that $\overline{R}^\square_{F',S'}$ is
        formally smooth over $\overline{R}_{F',S'}$ of relative
        dimension $j:=4|\Sigma'|-1$.
	
Now, the proofs of Proposition 3.3.1 and Theorem 3.4.11 of \cite{kis04} show that if we identify $\overline{R}^{\square}_{F',S'}$ with a power series ring $\overline{R}_{F',S'}[[y_{1},\dots,y_{j}]]$, then $\overline{R}^{\square}_{F',S'}$ is a finite $\bigO[[y_{1},\dots,y_{j}]]$-algebra, so that $\overline{R}_{F',S'}$ is a finite $\bigO$-algebra. Since $R_{F,S}$ is a finite $R_{F',S'}$-algebra, we see that $R^{\psi,\tau}_{F,S}$ is a finite $\bigO$-algebra, as required.
\end{proof}
\begin{rem}\label{rem: ord doesn't matter if p splits}
	We note that the hypothesis (ord) is automatically satisfied
        if $F_v=\Qp$ for every $v\in Z$, by the arguments of Corollary 3.1.6 of \cite{kis04} and Lemma 2.14 of \cite{kis05}.
\end{rem}
\begin{corollary}
	\label{1.6}With the assumptions of Proposition \ref{1.5} (in
        addition to those imposed at the beginning of this section) there exists a modular lifting $\rho$ of $\overline{\rho}$ which is potentially Barsotti-Tate at all places $v|p$, which is unramified outside $S$, and which has type $\tau_v$ at all places $v\in\Sigma$. More precisely, given for each $v\in\Sigma$ a choice of component of $R_v^{\square,\psi,\tau_v}[1/p]$ satisfying \emph{(ord)}, we can choose $\rho$ so that the corresponding point of $R_v^{\square,\psi,\tau_v}[1/p]$ lies on the given component.
\end{corollary}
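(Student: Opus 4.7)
The plan is that Propositions \ref{1.4} and \ref{1.5} already manufacture a $\overline{\Q}_p$-point of $R_{F,S}^{\psi,\tau}$ with the prescribed local properties at $\Sigma$; the only extra work is to verify that such a point is modular, and this comes for free from the $R=T$ argument already used to prove Proposition \ref{1.5}.

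Concretely, for each $v\in\Sigma$ I take $\overline{R}_v^{\square,\psi,\tau_v}$ to be the quotient of $R_v^{\square,\psi,\tau_v}$ cutting out the closure of the specified irreducible component of $\Spec R_v^{\square,\psi,\tau_v}[1/p]$, so that hypothesis (ord) of Proposition \ref{1.5} is satisfied by assumption. Proposition \ref{1.5} then gives that $R_{F,S}^{\psi,\tau}$ is a finite $\bigO$-module of rank at least $1$, so that $R_{F,S}^{\psi,\tau}[1/p]$ is a nonzero finite-dimensional $E$-algebra and admits a surjection onto some finite extension $E'/E$. Pulling back the universal deformation along this map produces a continuous representation $\rho:G_{F,S}\to\GL_2(E')$ lifting $\overline{\rho}$, with determinant $\psi$ and unramified outside $S$. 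By the defining properties of the quotients $R_v^{\square,\psi,\tau_v}$ (Propositions \ref{1.2} and \ref{1.3}), this $\rho$ is automatically potentially Barsotti-Tate of type $\tau_v$ at each $v|p$, has Weil-Deligne inertial type $\tau_v$ at each $v\in\Sigma$ with $v\nmid p$, and corresponds to a point on the prescribed component of $R_v^{\square,\psi,\tau_v}[1/p]$ at every $v\in\Sigma$.

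For modularity, the key observation is that in the proof of Proposition \ref{1.5} the finiteness of $R_{F,S}^{\psi,\tau}$ over $\bigO$ was established by transferring through a solvable extension $F'/F$ and invoking Kisin's $R=T$ theorem for $\overline{R}_{F',S'}$. That theorem identifies (the reduction of) $\overline{R}_{F',S'}$ with a Hecke algebra, so every $\overline{\Q}_p$-point of $\overline{R}_{F',S'}$ is automorphic over $F'$. Applied to the image of the point giving $\rho$, this shows that $\rho|_{G_{F'}}$ is automorphic; since $\overline{\rho}|_{G_{F'(\zeta_p)}}$ is absolutely irreducible and $F'/F$ is solvable, solvable cyclic base change (Langlands, Arthur-Clozel) then descends automorphy back down to $F$, and $\rho$ is modular.

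The main technical hurdle is the one that was already absorbed into Proposition \ref{1.5}, namely matching the chosen local component of $R_v^{\square,\psi,\tau_v}$ at each $v|p$ with the corresponding (ordinary versus non-ordinary) component of $R_{v'}^{\square,0,1}$ seen by the Taylor-Wiles-Kisin patching over $F'$; this was handled there through the explicit choice of $\overline{R}_{v'}^{\square,0,1}$ and the use of a Hecke algebra containing the operators at places above $p$. With that compatibility in hand, the corollary is a formal consequence.
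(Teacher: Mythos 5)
Your proof is correct and takes essentially the same approach as the paper: the paper's proof is a one-sentence appeal to Proposition~\ref{1.5} for the existence of the lift and to its proof (via the $R=T$ argument over the solvable extension $F'$, together with solvable base change) for modularity, and your write-up unpacks exactly this chain of reasoning.
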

\begin{proof}
	The existence of such a Galois representation follows at once from Proposition \ref{1.5}, and its modularity from the proof of Proposition \ref{1.5} (note that $F'/F$ is solvable).
\end{proof}

\section{Serre weights}\label{serre}
\subsection{} In this section we use the results of section \ref{1} to prove some of results used in \cite{gee053}, and to discuss some related conjectures. We assume that $p>2$.

We begin by recalling some standard facts from the theory of quaternionic modular forms; see either \cite{taymero}, section 3 of \cite{kis04} or section 2 of \cite{kis062} for more details, and in particular the proofs of the results claimed below. We will follow Kisin's approach closely. Let $F$ be a totally real field (with no assumption on the ramification of $p$) and let $D$ be a quaternion algebra with center $F$ which is ramified at all infinite places of $F$ and at a set $\Sigma$ of finite places, which contains no places above $p$. Fix a maximal order $\bigO_D$ of $D$ and for each finite place $v\notin\Sigma$ fix an isomorphism $(\bigO_D)_v\isoto M_2(\bigO_{F_v})$. For any finite place $v$ let $\pi_{v}$ denote a uniformiser of $F_v$.

Let $U=\prod_v U_v\subset (D\otimes_F\mathbb{A}^f_F)^\times$ be a
compact open subgroup, with each $U_v\subset (\bigO_D)^\times_v$. Furthermore, assume that $U_v=(\bigO_D)_v^\times$ for all $v\in\Sigma$, and that $U_v=\GL_2(\bigO_{F_v})$ if $v|p$.

Take $A$ a topological $\Z_p$-algebra. For each $v|p$, fix a
continuous representation $\sigma_v:U_v\to\Aut(W_{\sigma_v})$ with
$W_{\sigma_v}$ a finite free $A$-module. Write
$W_\sigma=\otimes_{v|p,A}W_{\sigma_v}$ and let
$\sigma=\otimes_{v|p}\sigma_v$. We regard $\sigma$ as a representation
of $U$ in the obvious way (that is, we let $U_v$ act trivially if
$v\nmid p$). Assume that there is a character $\psi:(\mathbb{A}^f_F)^\times/F^\times\to A^\times$ such that for any place $v$ of $F$, $\sigma|_{U_v\cap\bigO_{F_v}^\times}$ is multiplication by $\psi^{-1}$. Then we can think of $W_\sigma$ as a $U(\mathbb{A}_F^f)^\times$-module by letting $(\mathbb{A}_F^f)^\times$ act via $\psi^{-1}$.

Let $S_{\sigma,\psi}(U,A)$ denote the set of continuous functions $$f:D^\times\backslash(D\otimes_F\mathbb{A}_F^f)^\times\to W_\sigma$$ such that for all $g\in (D\otimes_F\mathbb{A}_F^f)^\times$ we have $$f(gu)=\sigma(u)^{-1}f(g)\text{ for all }u\in U,$$ $$f(gz)=\psi(z)f(g)\text{ for all }z\in(\mathbb{A}_F^f)^\times.$$We can write $(D\otimes_F\mathbb{A}_F^f)^\times=\coprod_{i\in I}D^\times t_iU(\mathbb{A}_F^f)^\times$ for some finite index set $I$ and some $t_i\in(D\otimes_F\mathbb{A}_F^f)^\times$. Then we have $$S_{\sigma,\psi}(U,A)\isoto\oplus_{i\in I}W_\sigma^{(U(\mathbb{A}_F^f)^\times\cap t_i^{-1}D^\times t_i)/F^\times},$$the isomorphism being given by the direct sum of the maps $f\mapsto\{f(t_i)\}$. From now on we make the following assumption:$$\text{For all }t\in(D\otimes_F\mathbb{A}_F^f)^\times\text{ the group }(U(\mathbb{A}_F^f)^\times\cap t^{-1}D^\times t)/F^\times=1.$$ One can always replace $U$ by a subgroup (satisfying the above assumptions) for which this holds (c.f. section 3.1.1 of \cite{kis07}). Under this assumption $S_{\sigma,\psi}(U,A)$ is a finite free $A$-module, and the functor $W_\sigma\mapsto S_{\sigma,\psi}(U,A)$ is exact in $W_\sigma$.

We now define some Hecke algebras. Let $S$ be a set of finite places
containing $\Sigma$, the places dividing $p$, and the places $v$ of $F$ such that $U_v$ is not a maximal compact subgroup of $ D_v^\times$. Let $\T^{\operatorname{univ}}_{S,A}=A[T_v,S_v]_{v\notin S}$ be the commutative polynomial ring in the formal variables $T_v$, $S_v$. Consider the left action of $(D\otimes_F\mathbb{A}_F^f)^\times$ on $W_\sigma$-valued functions on $(D\otimes_F\mathbb{A}_F^f)^\times$ given by $(gf)(z)=f(zg)$. Then we make $S_{\sigma,\psi}(U,A)$ a $\T^{\operatorname{univ}}_{S,A}$-module by letting $S_v$ act via the double coset $U\bigl(
\begin{smallmatrix}
	\pi_{v}&0\\0&\pi_{v}
\end{smallmatrix}
\bigr)U$ and $T_v$ via $U\bigl(
\begin{smallmatrix}
	\pi_{v}&0\\0&1
\end{smallmatrix}
\bigr)U$. These are independent of the choices of $\pi_{v}$. We will write $\T_{\sigma,\psi}(U,A)$ or $\T_{\sigma,\psi}(U)$ for the image of $\T^{\operatorname{univ}}_{S,A}$ in $\End S_{\sigma,\psi}(U,A)$.

Let $\mathfrak{m}$ be a maximal ideal of $\T^{\operatorname{univ}}_{S,A}$. We say that $\mathfrak{m}$ is in the support of $(\sigma,\psi)$ if $S_{\sigma,\psi}(U,A)_\mathfrak{m}\neq 0$. Now let $\bigO$ be the ring of integers in  $\Qpbar$, with residue field $\F=\Fpbar$, and suppose that $A=\bigO$ in the above discussion, and that $\sigma$ has open kernel. Consider a maximal ideal $\mathfrak{m}\subset\T^{\operatorname{univ}}_{S,\bigO}$ which is induced by a maximal ideal of $\T_{\sigma,\psi}(U,\bigO)$. Then there is a semisimple Galois representation $\overline{\rho}_\mathfrak{m}:G_F\to\GL_2(\F)$ associated to $\mathfrak{m}$ which is characterised up to equivalence by the property that if $v\notin S$ and $\Frob_v$ is an arithmetic Frobenius at $v$, then the trace of $\overline{\rho}_\mathfrak{m}(\Frob_v)$ is the image of $T_v$ in $\F$. 

We are now in a position to define what it means for a representation to be modular of some weight. Let $F_v$ have ring of integers $\bigO_v$ and residue field $k_v$, and let $\sigma$ be an $\overline{\F}_p$-representation of $G:=\prod_{v|p}\GL_2(k_v)$. We also denote by $\sigma$ the representation of $\prod_{v|p}\GL_2(\bigO_{F_v})$ induced by the surjections $\bigO_{F_v}\onto k_v$.
\begin{defn}
	We say that an irreducible representation $\overline{\rho}:G_F\to\GL_2(\overline{\F}_p)$ is modular of weight $\sigma$ if for some $D$, $\sigma$, $S$, $U$, $\psi$, and $\mathfrak{m}$ as above we have $S_{\sigma,\psi}(U,\F)_\mathfrak{m}\neq 0$ and $\overline{\rho}_\mathfrak{m}\cong\overline{\rho}$.
\end{defn}

\subsection{}

It seems that the methods of \cite{gee053} do not in themselves suffice to completely prove the conjectures of \cite{bdj}. We can, however, prove the conjecture completely in the case where $p>2$ splits completely in $F$ and $\rhobar|_{G_{F(\zeta_{p})}}$ is absolutely irreducible, and we do so in section \ref{4.5}. Before doing this, we wish to discuss some conjectures which extend those of \cite{bdj}. Firstly, we discuss what we regard as the natural generalisation of their conjectures to the case where $p$ is no longer unramified in $F$. For the rest of this section, we allow $p=2$. 

Let $K/\Q_p$ be a finite extension with ring of integers $\bigO_K$ and
residue field $k_K$, and let $\rhobar_K:G_K\to\GL_2(\overline{\F}_p)$
be a continuous representation. Let the absolute ramification degree
of $K$ be $e$. If $\rho:G_{K}\to \GL_n(\Qpbar)$ is de Rham (in
particular, if it is crystalline) and $\tau:K\into \overline{\Q}_p$ we
say that the Hodge-Tate weights of $\rho$ with respect to $\tau$ are
the $i$ for which
$gr^{-i}((\rho\otimes_{\Qp}B_{dR})^{G_{K}}\otimes_{\Qpbar\otimes_{\Qp}
  K,1\otimes\tau}\Qpbar)\neq 0$. We define a set $W(\rhobar_K)$ of
irreducible representations of $\GL_2(k_K)$ in the following
fashion. Let $\sigma_{\vec{a},\vec{b}}=\otimes_{\overline{\tau}:
  k_K\into{\overline{\F}_p}}\det{}^{a_{\overline{\tau}}}
\otimes\Sym^{b_{\overline{\tau}}-1}k_K^2\otimes_{\overline{\tau}}\overline{\F}_p$
with $1\leq b_{\overline{\tau}}\leq p$. For each embedding
$\overline{\tau}:k_{K}\into\overline{\F}_p$, let $\tau^j:K\into
\overline{\Q}_p $, $j=1,\dots,e$ be the embeddings lifting
$\overline{\tau}$. Then $W(\rhobar_K)$ is the set of
$\sigma_{\vec{a},\vec{b}}$ such that there is a continuous crystalline
lift $\rho_K:G_K\to\GL_2(\overline{\Q}_p)$ of $\rho_K$ such that for
all $\overline{\tau}$ the Hodge-Tate weights of $\rho$ with respect to
$\tau^j$ are $a_{\overline{\tau}}$ and
$a_{\overline{\tau}}+b_{\overline{\tau}}$ if $j=1$ and $0$ and $1$
otherwise. Note that this definition is independent of the choice of
$\tau^1$, as for any $j$ there is an automorphism of $\Qpbar$ taking
$\tau^1$ to $\tau^j$

Let $\overline{\rho}$ be as above. Then we conjecture
\begin{conj}
  \label{3.4}$\overline{\rho}$ is modular of weight $\sigma$ if and
  only if $\sigma=\otimes_{v|p}\sigma_v$, with $\sigma_v\in
  W(\overline{\rho}|_{G_{F_v}})$ for all $v|p$.
\end{conj}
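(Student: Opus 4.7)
The plan is to prove the conjecture by splitting it into two directions and reducing each to local questions at places dividing $p$, exploiting the deformation-theoretic machinery of Sections \ref{local} and \ref{1}.

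For the ``only if'' direction, I would start from a Hilbert modular form $f$ whose associated residual representation is $\rhobar$ and invoke local-global compatibility at places $v|p$: for a form of appropriate classical weight the associated $\rho_f|_{G_{F_v}}$ is potentially semistable with Hodge-Tate weights controlled by the multiweight of $f$ at $v$. The key step is a companion-form argument producing a congruent form whose weight is concentrated on a single lift $\tau^1$ of each residual embedding $\overline{\tau}$, while all other embeddings of $F_v$ lifting $\overline{\tau}$ carry Hodge-Tate weights $\{0,1\}$; this is exactly the crystalline lift required by the definition of $W(\rhobar|_{G_{F_v}})$.

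For the ``if'' direction, I would fix local crystalline lifts $\rho_v$ at each $v|p$ witnessing $\sigma_v \in W(\rhobar|_{G_{F_v}})$, assemble them into potentially Barsotti-Tate types $\tau_v$ (together with appropriate types at places in $\Sigma$ not above $p$, typically trivial after solvable base change), and apply Corollary \ref{1.6} to produce a modular global lift $\rho$ of $\rhobar$ whose local restrictions at $v|p$ are crystalline with the prescribed Hodge-Tate weights. The existence of the local lift $\rho_v$ itself witnesses that $R_v^{\square,\psi,\tau_v}$ is nonzero and selects the component of Proposition \ref{1.2} on which the resulting closed point lies; one then checks the hypothesis (ord) of Proposition \ref{1.5}, either directly in the totally split case via Remark \ref{rem: ord doesn't matter if p splits} or via a separate existence-of-ordinary-lifts input. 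Local-global compatibility then identifies a Hilbert modular form of the prescribed classical weight at $p$, exhibiting $\rhobar$ as modular of weight $\otimes_v \sigma_v$.

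The principal obstacle is that the definition of $W(\rhobar_K)$ in the ramified case involves an asymmetric distribution of Hodge-Tate weights across the $e$ embeddings lifting a given $\overline{\tau}$: only one embedding carries the genuine weight $\{a_{\overline{\tau}}, a_{\overline{\tau}}+b_{\overline{\tau}}\}$, while the other $e-1$ carry $\{0,1\}$. Bridging this asymmetric local condition with the symmetric algebraic weight $\sigma_{\vec{a},\vec{b}}$ of $\GL_2(k_K)$ requires delicate integral $p$-adic Hodge theory (Breuil or Kisin modules with descent data) that goes well beyond the Fontaine-Laffaille range, and in particular one must both exhibit such crystalline lifts when $e>1$ and match them to the expected classical weight via the right theory of local models for potentially Barsotti-Tate deformation rings. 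This is the main reason the conjecture can only be proved unconditionally in the case where $p$ splits completely in $F$, where $e=1$ and the asymmetry collapses to the standard BDJ picture.
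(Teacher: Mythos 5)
This statement is a \emph{conjecture}, not a theorem: the paper does not prove Conjecture \ref{3.4} in the generality in which it is stated, and you should be careful not to present a heuristic as a proof. What the paper actually offers is (a) a motivational argument for the \emph{necessity} direction, valid only under the restrictive hypothesis $2a_{\taubar}+b_{\taubar}=1$ for all $\taubar$ (so that $\psi$ lifts to a suitable finite-order character and Lemma \ref{432} together with Jacquet--Langlands and local-global compatibility apply), together with an appeal to the Fontaine--Mazur philosophy for sufficiency; and (b) a complete proof \emph{only} in the case that $p>2$ splits completely in $F$, carried out in \S\ref{4.5}. Outside that case the conjecture remains open, and the paper says so explicitly.

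Your proposal has the right flavour but papers over the genuine obstruction in the ``if'' direction. Corollary \ref{1.6} produces lifts that are \emph{potentially Barsotti--Tate} at $v\mid p$, i.e.\ of Hodge--Tate weights $\{0,1\}$; it does not produce crystalline lifts with the mixed Hodge--Tate weights $\{a_{\taubar},a_{\taubar}+b_{\taubar}\}$ at one embedding and $\{0,1\}$ at the others, which is what the definition of $W(\rhobar|_{G_{F_v}})$ actually demands. The bridge between the two is the inertial-type/weight dictionary: one replaces the crystalline lift of non-minimal Hodge--Tate weights by a potentially Barsotti--Tate lift of a tame type $\tau_v$, and then reads off possible Serre weights from the Jordan--H\"older factors of $\sigma(\tau_v)\otimes_{\bigO}\F$ via Lemmas \ref{441} and \ref{lem:typesversusweights}, together with Savitt's local results (Theorem \ref{442}). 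That dictionary is only worked out in the paper for $F_v=\Q_p$, which is precisely why the proof in \S\ref{4.5} requires $p$ to split completely. When $F_v/\Q_p$ is ramified, the relevant inertial local Langlands correspondence and the structure of the mod-$p$ reductions of the types become substantially more complicated, and, equally seriously, one lacks the $R=T$ input over ramified extensions that would be needed. Your ``companion form'' sketch for the ``only if'' direction is likewise not what the paper does: the actual argument (in the split case) goes through the same type/weight dictionary, not through companion forms. You correctly flag the asymmetry of the Hodge--Tate weights across embeddings as a difficulty, but the deeper issue is the absence of the type/weight correspondence and the $R=T$ theorems needed to run Corollary \ref{1.6} in the ramified setting, not merely ``delicate integral $p$-adic Hodge theory.''
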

We now explain the motivation for this conjecture. Let
$\sigma=\otimes_{v|p}\sigma_v$, with each
$\sigma_v=\sigma_{\vec{a},\vec{b}}$. Suppose temporarily
that all of the $b_{\taubar}$ are odd, and that
$2a_{\taubar}+b_{\taubar}=1$ for all $\taubar$. Then we may define a
representation $\tilde{\sigma}$ of
$\prod_{v|p}\GL_2(\bigO_{F_v})$ by
$\tilde{\sigma}=\otimes\tilde{\sigma}_v$, where $\tilde{\sigma}_v=\otimes_{\overline{\tau}:
  k_v\into{\overline{\F}_p}}\det{}^{a_{\overline{\tau}}}
\otimes\Sym^{b_{\overline{\tau}}-1}F_v^2\otimes_{\tau^1}\Qpbar$
(where $k_v$ is the residue field of $F_v$). Note that the reduction
mod $p$ of $\tilde{\sigma}$ is just $\sigma$. The assumption that
$2a_{\taubar}+b_{\taubar}=1$ for all $\taubar$ allows us to lift
$\psi$ as above to a finite-order character $\tilde{\psi}$ satisfying
the required compatibility with $\tilde{\sigma}$, and Lemma \ref{432}
below, together with the Jacquet-Langlands correspondence and the
standard properties of Galois representations associated to Hilbert
modular forms, shows that $\rhobar$ is modular of weight $\sigma$ if and only
if $\rhobar$ lifts to a modular representation
$\rho:G_F\to\GL_2(\Qpbar)$ such that for all $v|p$, $\rho|_{G_{F_v}}$
is crystalline, and for all $\taubar$ the Hodge-Tate weights of $\rho$
with respect to $\tau^j$ are  $a_{\overline{\tau}}$ and
$a_{\overline{\tau}}+b_{\overline{\tau}}$ if $j=1$ and $0$ and $1$
otherwise.

Thus (under the assumption that $2a_{\taubar}+b_{\taubar}=1$ for all
$\taubar$) we see that the condition on the existence of local
crystalline lifts should be a necessary condition. On the other hand,
we conjecture that it is also sufficient, on the grounds of a general
principle that ``the only obstructions to lifting Galois
representations should be obvious'', and the Fontaine-Mazur
conjecture. Corollary \ref{1.6} is a persuasive example of the
validity of this principle. In the general case (where, for example,
some of the $b_{\taubar}$ are allowed to be even) this line of
reasoning is no longer valid, but we continue to make the
conjecture. This may be justified in two ways: firstly, it is
generally believed that there should be a purely local formulation of
the weight part of any generalisation of Serre's conjecture, and the
obstruction to reasoning as above in the general case is a global
one. Secondly, this obstruction is already present in many cases in
the original Buzzard-Diamond-Jarvis conjecture (i.e. the case where
$p$ is unramified in $F$), which has been proved in many cases in
\cite{gee053}. 

One could hope to make this conjecture more explicit by finding
conditions on $\rhobar_K$ for such a lift $\rho_K$ to exist; this is a
purely local question. One might also hope to use Corollary \ref{1.6}
to prove cases of Conjecture \ref{3.4} for general totally real
fields; we do not know whether this is possible, although we intend to
investigate the possibility in future work. Note however that in
general as the ramification of $F$ increases we expect that the set of
possible weights will also increase. The combinatorial arguments of
\cite{gee053} depend on being able to rule out many weights, because
lifts of the corresponding types do not exist, and these arguments
will not work if more weights occur.

We now wish to propose a more general conjecture that deals with
``higher'' weights. Namely, suppose that
$\sigma=\sigma_{\vec{a},\vec{b}}$ as above, except that we now drop
the assumption that $b_\tau\leq p$, so that $\sigma$ is no longer
necessarily irreducible. Exactly as above, let $K/\Q_p$ be a finite
extension with ring of integers $\bigO_K$ and residue field $k_K$, and
let $\rhobar_K:G_K\to\GL_2(\overline{\F}_p)$ be a continuous
representation. Let the absolute ramification degree of $K$ be $e$. We
define a set $W(\rhobar_K)$ of representations of $\GL_2(k_K)$ in the
following fashion. Let
$\sigma_{\vec{a},\vec{b}}=\otimes_{\overline{\tau}:
  k_K\into{\overline{\F}_p}}\det{}^{a_{\overline{\tau}}}
\otimes\Sym^{b_{\overline{\tau}}-1}k_K^2\otimes_{\overline{\tau}}\overline{\F}_p$. For each embedding
$\overline{\tau}:k_{K}\into\overline{\F}_p$, let $\tau^j:K\into
\overline{\Q}_p $, $j=1,\dots,e$ be the embeddings lifting
$\overline{\tau}$. Then $W(\rhobar_K)$ is the set of
$\sigma_{\vec{a},\vec{b}}$ such that there is a continuous crystalline
lift $\rho_K:G_K\to\GL_2(\overline{\Q}_p)$ of $\rho_K$ such that for
all $\overline{\tau}$ the Hodge-Tate weights of $\rho$ with respect to
$\tau^j$ are $a_{\overline{\tau}}$ and
$a_{\overline{\tau}}+b_{\overline{\tau}}$ if $j=1$ and $0$ and $1$
otherwise.

Let $\overline{\rho}$ be as above. Then we conjecture
\begin{conj}\label{conj:BDJ higher weight}
$\overline{\rho}$ is modular of weight $\sigma$ if and only if $\sigma=\otimes_{v|p}\sigma_{v}$, with $\sigma_v\in W(\overline{\rho}|_{G_{F_v}})$.
\end{conj}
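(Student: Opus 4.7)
The plan is to prove each direction of the conjecture separately, following the strategy sketched for Conjecture \ref{3.4} but extending it to deal with the possibly reducible $\sigma$.

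For the necessity direction, my approach is to first reduce to the case $2a_{\taubar}+b_{\taubar}=1$ for all $\taubar$ by a preliminary central twist, then build a characteristic-zero lift $\tilde{\sigma}=\otimes_{v|p}\tilde{\sigma}_v$ together with a finite-order character $\tilde{\psi}$ compatible with $\tilde{\sigma}$, exactly as in the paragraph preceding Conjecture \ref{3.4}. Modularity of weight $\sigma$ gives a non-zero class in $S_{\sigma,\psi}(U,\F)_\mathfrak{m}$; by the exactness of $W\mapsto S_{W,\psi}(U,\cdot)$ in its first argument and a Deligne-Serre style lifting argument (or equivalently an $R=T$ comparison), this class lifts to a class in $S_{\tilde\sigma,\tilde\psi}(U,\bigO)_\mathfrak{m}$, producing a Hilbert modular form $f$ whose associated Galois representation $\rho_f$ lifts $\overline{\rho}$ and is crystalline at each $v|p$ with exactly the prescribed Hodge-Tate weights in each embedding. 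Restricting to each $G_{F_v}$ then exhibits the required local crystalline lifts, proving $\sigma_v\in W(\overline{\rho}|_{G_{F_v}})$.

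For the sufficiency direction, the plan is to use the local crystalline lifts $\rho_v$ of $\overline{\rho}|_{G_{F_v}}$ (for $v|p$) postulated by the hypothesis as non-emptiness witnesses for appropriate components of the local deformation rings at $p$. At each $v\nmid p$ in some finite auxiliary set $\Sigma$, one chooses a type $\tau_v$ admitting a lift. One then applies Corollary \ref{1.6} (or an appropriate generalisation encompassing crystalline local conditions) to produce a global modular lift $\rho$ of $\overline{\rho}$ realising this collection of local behaviours, and pulls the resulting Hilbert modular form back to the quaternionic side using Jacquet-Langlands. A reduction mod $p$, combined with the computation relating $\tilde\sigma_v$ to Hodge-Tate weights, should then produce a non-zero class in $S_{\sigma,\psi}(U,\F)_\mathfrak{m}$ and so establish modularity of weight $\sigma$.

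The hard part -- the reason this statement remains a conjecture rather than a theorem -- is three-fold. First, Corollary \ref{1.6} is restricted to potentially Barsotti-Tate local conditions at $v|p$; genuinely extending the argument to crystalline lifts of arbitrary Hodge-Tate weights requires the local deformation-ring results of \cite{kis06} and the present Theorem \ref{2.6} to be strengthened to cover those situations, which is a serious technical task. Second, when $\sigma$ is reducible, different Jordan-Hölder constituents of $\sigma$ can contribute to $S_{\sigma,\psi}$ via automorphic representations whose Galois representations have Hodge-Tate weights not matching those prescribed by $\sigma_{\vec a,\vec b}$, so an additional combinatorial argument would be needed to extract the correct constituent. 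Third, the numerical constraint $2a_{\taubar}+b_{\taubar}=1$ used to define $\tilde\psi$ must be removed by an independent argument; as the authors emphasise, the local-global dictionary used in the irreducible case breaks down there and must presumably be replaced by a purely local formulation of the weight condition, which is itself a substantial open problem even in the classical $\GL_2/\Q$ setting.
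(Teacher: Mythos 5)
The statement is a conjecture, not a theorem, and the paper does not prove it; you correctly recognise this and your list of obstacles is reasonable. But the paper's supporting discussion takes a genuinely different and simpler route than your proposed direct argument. You try to extend the lifting-to-characteristic-zero strategy from the motivation for Conjecture \ref{3.4} to arbitrary Hodge--Tate weights, which as you note would require strengthening the potentially Barsotti--Tate deformation-ring results behind Corollary \ref{1.6}. The paper instead reduces Conjecture \ref{conj:BDJ higher weight} to two other conjectures via a single elementary observation: by exactness of $\sigma\mapsto S_{\sigma,\psi}(U,\cdot)$, $\rhobar$ is modular of weight $\sigma$ if and only if $\rhobar$ is modular of weight $\sigma'$ for some Jordan--H\"older factor $\sigma'$ of $\sigma$. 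Combined with the definition of $W(\rhobar|_{G_{F_v}})$, this shows that Conjecture \ref{conj:BDJ higher weight} follows from Conjecture \ref{3.4} (the irreducible-weight case) together with Conjecture \ref{conj: local formulation of higher weight BDJ}, a purely local statement asserting that a crystalline lift with the Hodge--Tate weights attached to $\sigma$ exists if and only if one exists with the weights attached to some Jordan--H\"older factor of $\sigma$; the paper identifies this local statement as a weak form of the Breuil--M\'ezard conjecture. Your second obstacle --- extracting the correct Jordan--H\"older constituent --- is thus not an additional combinatorial problem on top of the others; it is precisely the content of the reduction, and is absorbed into the local Conjecture \ref{conj: local formulation of higher weight BDJ}. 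The paper also proves a special case ($F=\Q$ with mild hypotheses on $\rhobar|_{G_{\Qp}}$) along these lines, invoking Kisin's proof of the relevant cases of Breuil--M\'ezard in \cite{kis062} for the ``if'' direction and, for the ``only if'' direction, simply the classical fact that weight-$k$, level-prime-to-$p$ eigenforms give crystalline representations of Hodge--Tate weights $0$, $k-1$, without needing the elaborate $\tilde\sigma$/$\tilde\psi$ set-up you describe.
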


While this formulation is striking, if one believes Conjecture
\ref{3.4} then this conjecture is equivalent to a purely local one. Indeed, we
conjecture:

\begin{conj}
  \label{conj: local formulation of higher weight BDJ}Let $K/\Qp$ be a
  finite extension with residue field $k_K$. Let
  $\rhobar_K:G_K\to\GL_2(\Fpbar)$ be a continuous representation. Let $\sigma_{\vec{a},\vec{b}}=\otimes_{\overline{\tau}:
  k_K\into{\overline{\F}_p}}\det{}^{a_{\overline{\tau}}}
\otimes\Sym^{b_{\overline{\tau}}-1}k_K^2\otimes_{\overline{\tau}}\overline{\F}_p$,
with the $b_{\taubar}\ge 1$. For each embedding
$\overline{\tau}:k_{K}\into\overline{\F}_p$, let $\tau^j:K\into
\overline{\Q}_p $, $j=1,\dots,e$ be the embeddings lifting
$\overline{\tau}$. Then the following two statements are equivalent.
\begin{enumerate}
\item $\rhobar_K$ has a lift to a continuous crystalline
  representation $\rho:G_K\to\GL_2(\Qpbar)$ such that
  \begin{itemize}
  \item  for
all $\overline{\tau}$ the Hodge-Tate weights of $\rho$ with respect to
$\tau^j$ are $a_{\overline{\tau}}$ and
$a_{\overline{\tau}}+b_{\overline{\tau}}$ if $j=1$ and $0$ and $1$
otherwise
  \end{itemize}
\item For some Jordan-H\"older factor  $\sigma'_{\vec{a'},\vec{b'}}=\otimes_{\overline{\tau}:
  k_K\into{\overline{\F}_p}}\det{}^{a'_{\overline{\tau}}}
\otimes\Sym^{b'_{\overline{\tau}}-1}k_K^2\otimes_{\overline{\tau}}\overline{\F}_p$
of $\sigma_{\vec{a},\vec{b}}$ (so that $1\le b_{\taubar}'\le p$ for
all $\taubar$), $\rhobar_K$ has a lift to a continuous crystalline
  representation $\rho':G_K\to\GL_2(\Qpbar)$ such that
  \begin{itemize}
  \item  for
all $\overline{\tau}$ the Hodge-Tate weights of $\rho$ with respect to
$\tau^j$ are $a'_{\overline{\tau}}$ and
$a'_{\overline{\tau}}+b'_{\overline{\tau}}$ if $j=1$ and $0$ and $1$
otherwise.
\end{itemize}
\end{enumerate}

\end{conj}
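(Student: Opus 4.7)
The plan is to approach the two implications separately, with each direction requiring a different flavour of $p$-adic Hodge theoretic input. A preliminary combinatorial step is to describe the Jordan-H\"older factors of $\sigma_{\vec{a},\vec{b}}$ as a $\GL_2(k_K)$-representation. By Steinberg's tensor product theorem (applied embedding by embedding), these factors $\sigma_{\vec{a'},\vec{b'}}$ are obtained from $(\vec{a},\vec{b})$ by writing each $b_{\taubar}-1$ in base $p$ and redistributing the digits among the various embeddings of $k_K$ into $\Fpbar$, with the $a'_{\taubar}$ adjusted so that $\det\sigma_{\vec{a'},\vec{b'}}=\det\sigma_{\vec{a},\vec{b}}$. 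This gives an explicit numerical relation between the two Hodge-Tate types appearing in $(1)$ and $(2)$, and should be established first.

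For $(1)\Rightarrow (2)$, given a crystalline lift $\rho$ of Hodge-Tate type $(\vec{a},\vec{b})$, one would compute the semisimple reduction $\rhobar_K$ using the theory of Kisin modules (or Fontaine-Laffaille modules when the weights are small enough relative to the ramification). The expected output, consistent with known partial results, is that the Serre weights appearing in $\rhobar_K$ are governed precisely by the Jordan-H\"older factors of $\sigma_{\vec{a},\vec{b}}$; fixing such a factor $\sigma_{\vec{a'},\vec{b'}}$, one then exhibits a crystalline lift of type $(\vec{a'},\vec{b'})$ by direct construction in the Kisin-module category, which is tractable because all $b'_{\taubar}\le p$.

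For $(2)\Rightarrow (1)$, one starts from a small-weight crystalline lift $\rho'$ and must produce one of the larger weight. The natural strategy is to combine a twist of $\rho'$ by an auxiliary crystalline character (to place the Hodge-Tate weights in the correct range) with a deformation argument inside an irreducible component of a potentially crystalline deformation ring, using Theorem \ref{2.6} to supply the required dimension-theoretic input. The main obstacle is this direction: producing a crystalline lift of large Hodge-Tate weight from one of small Hodge-Tate weight seems to require detailed control over the crystalline deformation rings for Hodge types of unbounded size, beyond what is developed in the present paper, so a complete proof will likely demand further Breuil-Kisin or integral $p$-adic Hodge theoretic calculations.
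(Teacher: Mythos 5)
The statement you were given is labeled \verb|\begin{conj}| in the paper: it is Conjecture \ref{conj: local formulation of higher weight BDJ}, and the paper gives \emph{no proof} of it. The paper simply states it, observes that it ``can be regarded as a weak generalisation of the Breuil--M\'ezard conjecture,'' and notes that Conjecture \ref{conj:BDJ higher weight} would follow from Conjecture \ref{3.4} together with this conjecture. So there is no ``paper's own proof'' against which to compare your proposal.

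Evaluated on its own terms, your sketch correctly identifies both the combinatorial ingredient (Steinberg's tensor product theorem for the Jordan--H\"older factors of $\sigma_{\vec{a},\vec{b}}$, embedding by embedding) and the $p$-adic Hodge-theoretic nature of the two implications; and you are right that the direction $(1)\Rightarrow(2)$ is in the spirit of the Breuil--M\'ezard conjecture. But this is an outline of a possible attack, not a proof, and you say as much yourself: you acknowledge that the direction $(2)\Rightarrow(1)$ ``seems to require detailed control over the crystalline deformation rings for Hodge types of unbounded size, beyond what is developed in the present paper.'' Since the statement was genuinely open (and offered only as a conjecture) at the time of writing, the honest conclusion is that there is nothing to verify: the paper does not claim to prove it, and your proposal does not prove it either. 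If you want to make progress you should start with the cases where the Fontaine--Laffaille or Breuil--M\'ezard machinery is available in full (e.g.\ $K=\Q_p$, where $(1)\Rightarrow(2)$ is essentially the content of \cite{bm} and \cite{kis062}), rather than treating this as a result already established in the text.
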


This conjecture can be regarded as a weak generalisation of the
Breuil-M\'{e}zard conjecture (\cite{bm}). The connection between the
two conjectures is as follows: $\rhobar$ is modular of weight
$\sigma$ if and only if $\rhobar$ is modular of weight $\sigma'$ for
$\sigma'$ some Jordan-H\"older factor of $\sigma$. Then Conjecture
\ref{conj:BDJ higher weight} follows from Conjectures \ref{3.4} and
\ref{conj: local formulation of higher weight BDJ}, together with the
definition of $ W(\overline{\rho}|_{G_{F_v}})$.

We remark that Conjecture \ref{conj:BDJ higher weight} is true provided that $F=\Q$ and $\rhobar|_{G_{\Q(\zeta_{p})}}$ is absolutely irreducible, $\rhobar|_{G_{\Qp}}\ncong\bigl(
\begin{smallmatrix}
	\omega\chi&*\\0&\chi
\end{smallmatrix}
\bigr)$ for any character $\chi$, and if $\rhobar|_{G_{\Qp}}$ has scalar semisimplification then it is scalar. The necessity of the condition in the conjecture is clear; twisting, we may assume that $\sigma=\Sym^{k-2}\F_p^2$, $k\geq 2$. If $\rhobar$ is modular of weight $\sigma$ then it lifts to a characteristic zero modular form of weight $k$ and level prime to $p$, whose associated Galois representation is crystalline of Hodge-Tate weights $0$, $k-1$. The converse follows from the proof of the relevant cases of the Breuil-Mezard conjecture in \cite{kis062}, as one sees that if $\rhobar|_{G_{\Qp}}$ has a crystalline lift of Hodge-Tate weights $0$, $k-1$ then $\rhobar$ must be modular of weight $\sigma'$ for some irreducible subquotient $\sigma'$ of $\sigma$, so that $\rhobar$ must be modular of weight $\sigma$.

\subsection{}We now state a Serre-type conjecture on the possible weights of an $n$-dimensional mod $p$ representation of $G_\Q$. For this section we allow $p=2$. We anticipate that an entirely analogous statement should be true for any number field, but for simplicity of statement, and the lack of evidence in other cases, we restrict ourselves to the rationals. Such conjectures have also been made in \cite{adp} and \cite{her06}, but the set of weights predicted in \cite{adp} is  incomplete (although one should note that it is never claimed to be a complete list), and \cite{her06} only considers tame representations.

Suppose that $$\rhobar:G_\Q\to\GL_n(\Fpbar)$$ is continuous, odd, and irreducible. Here ``odd'' means either that $p=2$, or that $|n_+-n_-|\leq 1$, where $n_+$ (respectively $n_-$) is the number of eigenvalues equal to $1$ (respectively $-1$) of the image of a complex conjugation in $G_\Q$. It is conjectured that any such representation should arise from a Hecke eigenclass in some cohomology group. We now make this precise, following \cite{her06} (although note that our  conventions differ from the ones used there).

For any positive integer $N$ with $(N,p)=1$, we define $S_1(N)$ to be the matrices in $\GL_n^+(\Z_{(N)})$ whose first row is congruent to $(1,0,\dots,0)$ mod $N$, and let $\Gamma_1(N)$ to be the matrices in $\SL_n(\Z)$ whose first row is congruent to $(1,0,\dots,0)$ mod $N$. We obtain a Hecke algebra $\mathcal{H}_1(N)$ in the usual fashion: $\mathcal{H}_1(N)$ consists of right $\Gamma_1(N)$-invariant elements in the free abelian group of right cosets $\Gamma_1(N)s$, $s\in S_1(N)$. Thus any double coset $$\Gamma_1(N)s\Gamma_1(N)=\coprod_i\Gamma_1(N)s_i$$(a finite union) is an element of $\mathcal{H}_1(N)$ in the obvious way, and we denote it $[\Gamma_1(N)s\Gamma_1(N)]$. If $M$ is a right $S_1(N)$-module then the group cohomology modules $H^\bullet (\Gamma_1(N),M)$ have a natural linear action of $\mathcal{H}_1(N)$, determined by $$[\Gamma_1(N)s\Gamma_1(N)]m=\sum_i s_i m$$for all $s\in S_1(N)$, $m\in H^0(\Gamma_1(N),M)$. 

For any prime $l\nmid N$ and $0\leq k\leq n$ we define a Hecke operator$$T_{l,k}=\left[\Gamma_1(N) \left( 
\begin{smallmatrix}
	l & & & & & \\
	&\ddots & & & & \\
	& & l& & & \\
	& & &1 & & \\
	& & & &\ddots & \\
	& & & & & 1 
\end{smallmatrix}
\right)\Gamma_1(N)\right]\in\mathcal{H}_0(N)$$(where there are $k$ $l$'s on the diagonal). Now let $M$ be a right $\Fpbar[S_1(N)]$-module, and let $\alpha\in H^0(\Gamma_1(N),M)$ be a common eigenvector of the $T_{l,k}$ for all $l\nmid Np$, $0\leq k\leq n$. We say that $\rhobar$ is \emph{attached} to $\alpha$ if for all $l\nmid Np$ we have $\rhobar$ unramified at $l$ and $$\det(1-\rhobar(\Frob_l)X)=\sum_{i=0}^n(-1)^il^{i(i-1)/2}a(l,i)X^i$$ where $a(l,i)$ is the eigenvalue of $T_{l,i}$ acting on $\alpha$, and $\Frob_l$ is an arithmetic Frobenius at $l$.

Let $F$ be a simple $\Fpbar[\GL_n(\F_p)]$-module, and let $S_1(N)$ act on $F$ via the natural projection $S_1(N)\onto\GL_n(\F_p)$. 
\begin{defn}
	We say that $F$ is a weight for $\rhobar$, or $\rhobar$ has weight $F$, if there exists an $N$ coprime to $p$ and a non-zero Hecke eigenvector $\alpha\in H^e(\Gamma_1(N),F)$ (for some $e$) such that $\rhobar$ is attached to $\alpha$.
\end{defn}

We now give a conjectural description of the set of weights for $\rhobar$. Recall that the simple $\Fpbar[\GL_n(\F_p)]$-modules are the $F(a_1,\dots,a_n)$, where $0\leq a_1-a_2,a_2-a_3,\dots,a_{n-1}-a_n\leq p-1$, and $F(a_1,\dots, a_n)$ is the socle of the dual Weyl module for $\GL_n(\F_p)$ of highest weight $(a_1,\dots,a_n)$ (see \cite{her06}). We have $F(a_1,\dots,a_n)\cong F(a_1',\dots,a_n')$ if and only if $(a_1,\dots,a_n)-(a'_1,\dots,a'_n)\in(p-1,\dots,p-1)\Z$.
\begin{conj}
	$F(a_1,\dots,a_n)$ is a weight for $\rhobar$ if and only if $\rhobar|_{G_{\Q_{p}}}$ has a crystalline lift with Hodge-Tate weights $a_1+(n-1),a_2+(n-2),\dots,a_n$.
\end{conj}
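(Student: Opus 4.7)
The statement is a conjecture rather than a theorem, so the plan below is necessarily speculative and reduces the claim to a set of currently open inputs, following the strategy that has been successful for $n=2$.

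For the necessity direction, suppose $F(a_1,\ldots,a_n)$ is a weight for $\rhobar$, witnessed by a nonzero Hecke eigenvector $\alpha\in H^e(\Gamma_1(N),F(a_1,\ldots,a_n))$. The first step is a congruence argument, in the style of Ash--Stevens, lifting $\alpha$ to a characteristic-zero Hecke eigenclass $\tilde\alpha$ in the cohomology of $\Gamma_1(N)$ (possibly after deepening level or adjusting nebentypus) with coefficients in a dual Weyl module of highest weight $(a_1,\ldots,a_n)$. The second step is to attach to $\tilde\alpha$ an $n$-dimensional $p$-adic Galois representation $\tilde\rho:G_\Q\to\GL_n(\Qpbar)$. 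Local--global compatibility at $p$ should then give that $\tilde\rho|_{G_{\Qp}}$ is crystalline with Hodge--Tate weights $a_1+(n-1),\ldots,a_n$, and reducing mod $p$ produces the required crystalline lift of $\rhobar|_{G_{\Qp}}$.

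For the sufficiency direction, given a crystalline lift $\rho:G_{\Qp}\to\GL_n(\Qpbar)$ of $\rhobar|_{G_{\Qp}}$ with the prescribed Hodge--Tate weights, the plan is to globalise: find an automorphic Galois representation $\tilde\rho:G_\Q\to\GL_n(\Qpbar)$ lifting $\rhobar$ whose restriction to $G_{\Qp}$ realises $\rho$, or at least lies on the same component of a suitably defined local crystalline deformation ring. This is precisely the type of problem addressed by Corollary \ref{1.6} in dimension two, and the natural approach is to establish the analogue of Theorem \ref{2.6} and Proposition \ref{1.5} in $n$ dimensions, combined with an $R=T$ theorem for $\GL_n/\Q$ allowing ramification at $p$. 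Once $\tilde\rho$ is in hand, the corresponding characteristic-zero Hecke eigenclass lies in cohomology with coefficients in the dual Weyl module of highest weight $(a_1,\ldots,a_n)$, and a Jordan--H\"older analysis in the spirit of the Breuil--M\'ezard philosophy (compare the discussion surrounding Conjecture \ref{conj: local formulation of higher weight BDJ}) should then pin down $F(a_1,\ldots,a_n)$ as one of the constituent weights occurring for $\rhobar$.

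The main obstacle is that essentially none of the required modularity / attachment inputs is available in this generality: attaching Galois representations to mod $p$ Hecke eigenclasses in the cohomology of $\SL_n(\Z)$-arithmetic groups is out of reach beyond low-dimensional cases, and there is no general $\GL_n$ analogue of the Kisin / Khare--Wintenberger $R=T$ machinery permitting arbitrary crystalline lifts at $p$. The combinatorial identification of $F(a_1,\ldots,a_n)$ inside the mod $p$ reduction of the dual Weyl module also requires a form of the Breuil--M\'ezard conjecture which is itself open for $n>2$. Thus the conjecture should be regarded as a clean formulation that organises several currently unapproachable inputs, and any unconditional progress is likely to be restricted to small $n$, to highly regular or generic weights, or to the semisimple case where one can exploit the tame structure studied in \cite{her06}.
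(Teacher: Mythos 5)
This statement is a conjecture, and the paper provides no proof of it; you have correctly recognised this and treated the ``proof'' as a strategy outline rather than an argument. The paper's own discussion is limited to motivation (it is presented as the natural generalisation of Conjecture \ref{3.4}, compared to \cite{her06} and \cite{adp}, and checked in one explicit example with $\rhobar|_{G_{\Q_p}}=1\oplus\omega^2\oplus\omega^4$ by exhibiting crystalline lifts for each of the nine predicted weights). Your outline is consistent with the paper's philosophy and correctly identifies the two directions (necessity via lifting a mod $p$ eigenclass to characteristic zero and invoking local--global compatibility; sufficiency via a globalisation/automorphy-lifting argument in the spirit of Corollary \ref{1.6}), and your list of obstacles --- attaching Galois representations to torsion classes for $\GL_n/\Q$, the absence of an $R=\mathbb{T}$ theorem for $\GL_n$ over $\Q$ permitting general crystalline types at $p$, and the open Breuil--M\'ezard-type combinatorics for $n>2$ --- is accurate and matches the reasons the paper leaves this as a conjecture. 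One small addition worth noting: the paper explicitly points out that the conjecture is well-posed under $F(a_1,\dots,a_n)\cong F(a_1+(p-1),\dots,a_n+(p-1))$ because one can twist any crystalline lift by $\epsilon^{k(p-1)}$; your outline does not address this consistency check, though it is straightforward.
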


Note that this is a well-defined conjecture, as if $\rho$ is a
crystalline lift of $\rhobar|_{G_{\Q_{p}}}$ with Hodge-Tate weights
$a_1+(n-1),\dots,a_n$, then for any $k$, $\rho\otimes\epsilon^{k(p-1)}$ is also a crystalline lift, with Hodge-Tate weights $a_1+(n-1)+(p-1)k,\dots,a_n+(p-1)k$.

This conjecture is the natural generalisation of Conjecture \ref{3.4} to higher-dimensional representations. It is rather less explicit than the conjecture of \cite{her06}, but it has the advantage of applying to all $\rhobar$, rather than just the $\rhobar$ which are tame at $p$, and it also applies to all weights, rather than just the regular weights. While it is less explicit, one may well not need a more explicit formulation in applications, for example to modularity lifting theorems.

It is natural to ask whether our conjecture agrees with that of \cite{her06} in the cases where $\rhobar|_{G_{\Q_{p}}}$ is tamely ramified. We do not know if this is the case in general; one difficulty in checking this is that typically the conjectural crystalline representations are not in the Fontaine-Laffaille range. However, one can in many cases show that we predict at least as many weights as \cite{her06}. To keep the notation reasonably clear, we restrict ourselves to a single example.

Suppose that $p\geq 5$ and $\rhobar|_{G_{\Q_{p}}}=1\oplus\omega^2\oplus\omega^4$. Then \cite{her06} predicts $9$ weights for $\rhobar$, namely 

\begin{tabular}
	{llll}$(a_1,a_2,a_3)$=&$(2,1,0),$& $(p-1,p-2,4),$ &$(p-3,3,2),$\\&$(p-1,3,0)$&$(p+1,p-2,2),$&$(2p-4,p,4),$\\&$(p+2,p-2,1)$&$(2p-3,p,3),$&$(2p-1,p+2,p-2)$
\end{tabular}

\medskip(see Proposition 7.3 and Lemma 7.4 of \cite{her06}, and note that these weights were also predicted by \cite{adp}). For each of these weights we will write down a crystalline lift of $\rhobar|_{G_{\Q_{p}}}$ with the appropriate Hodge-Tate weights.

Note that e.g. from Theorem 3.2.1 of \cite{ber05} there is a $2$-dimensional crystalline representation $V$ of $G_{\Q_p}$ with Hodge-Tate weights $0$, $p+3$ and $\overline{V}=\omega\oplus\omega^3$, and a crystalline representation $W$ with Hodge-Tate weights $0$, $2p-4$ and $\overline{W}=\omega^{-3}\oplus\omega$ (in the notation of \cite{ber05}, take $V=V_{p+4,p/3}$ and $W=V_{2p-3,-p/4}$). Then the appropriate lifts are as follows: \medskip
\begin{tabular}
	{lll}&\qquad $(2,1,0)$&$\epsilon^4\oplus\epsilon^2\oplus 1$\\
	&\qquad$(p-1,p-2,4)$&$\epsilon^{p+1}\oplus\epsilon^{p-1}\oplus\epsilon^4$\\
	&\qquad$(p-3,3,2)$&$\epsilon^{p-1}\oplus\epsilon^4\oplus\epsilon^2$\\
	&\qquad$(p-1,3,0)$&$\epsilon^{p+1}\oplus\epsilon^4\oplus 1$\\
	&\qquad$(p+1,p-2,2)$&$\epsilon^{p+3}\oplus\epsilon^{p-1}\oplus\epsilon^2$\\
	&\qquad$(2p-4,p,4)$&$\epsilon^{2p-2}\oplus\epsilon^{p+1}\oplus\epsilon^4$\\
	&\qquad$(p+2,p-2,1)$ & $\epsilon V\oplus\epsilon^{p-1}$\\
	&\qquad$(2p-3,p,3)$ & $\epsilon^3W\oplus\epsilon^{p+1}$\\
	&\qquad$(2p-1,p+2,p-2)$ & $\epsilon^{p-2}V\oplus\epsilon^{p+3}$ 
\end{tabular}
\medskip
These lifts are all reducible; it seems likely that one can always produce appropriate reducible lifts whenever the 3-dimensional representation $\rhobar|_{G_{\Q_{p}}}$ is semisimple of niveau $1$, but it will not be possible in general, of course. 
 
Finally, we remark that it may be possible to make a conjecture for ``higher weights'', as we did for $\GL_2$ (see Conjecture \ref{conj:BDJ higher weight}). We do not do this for the simple reason that we have no evidence for such a conjecture. In addition, one can make an analogous conjecture for unitary groups, which one might hope to prove via the techniques of \cite{gee053} and section \ref{unitary} of this paper. We hope to return to this in future work.

\subsection{}\label{4.5}From now on we suppose that $p>2$ splits completely in $F$.


 We recall some group-theoretic results from section 3 of \cite{cdt}. Firstly, recall the irreducible finite-dimensional representations of $\GL_2(\F_p)$ over $\overline{\Q}_p$. Once one fixes an embedding $\F_{p^2}\into M_2(\F_p)$, any such representation is equivalent to one in the following list: 
\begin{itemize}
	\item For any character $\chi:\F_p^\times\to\overline{\Q}_p^\times$, the representation $\chi\circ\det$. 
	\item For any $\chi:\F_p^\times\to\overline{\Q}_p^\times$, the representation $\operatorname{sp}_\chi=\operatorname{sp}\otimes(\chi\circ\det)$, where $\operatorname{sp}$ is the representation of $\GL_2(\F_p)$ on the space of functions $\mathbb{P}^1(\F_p)\to\overline{\Q}_p$ with average value zero. 
	\item For any pair $\chi_1\neq\chi_2:\F_p^\times\to\overline{\Q}_p^\times$, the representation $$I(\chi_1,\chi_2)=\Ind_{B(\F_p)}^{\GL_2(\F_p)}\chi_1\otimes\chi_2,$$ where $B(\F_p)$ is the Borel subgroup of upper-triangular matrices in $\GL_2(\F_p)$, and $\chi_1\otimes\chi_2$ is the character $$ \left( 
	\begin{array}{cc}
		a & b \\
		0 & d \\
	\end{array}
	\right)\mapsto\chi_1(a)\chi_2(d). $$ 
	\item For any character $\chi:\F_{p^2}^\times\to\overline{\Q}_p^\times$ with $\chi\neq\chi^p$, the cuspidal representation $\Theta(\chi)$, characterised by $$\Theta(\chi)\otimes\operatorname{sp}\cong\Ind_{\F_{p^2}^\times}^{\GL_2(\F_p)}\chi.$$
\end{itemize}
We now recall the reductions mod $p$ of these representations. Let $\sigma_{m,n}$ be the irreducible $\overline{\F}_p$-representation $\det^m\otimes\Sym^{n}\Fpbar^{2}$, with $0\leq m<p-1$, $0\leq n\leq p-1$. Then we have: 
\begin{lemma}
	\label{441}Let $L$ be a finite free $\bigO$-module with an action of $\GL_2(\F_p)$ such that $V=L\otimes_\bigO\overline{\Q}_p$ is irreducible. Let $\tilde{}$ denote the Teichm\"{u}ller lift. 
	\begin{enumerate}
		\item If $V\cong\chi\circ\det$ with $\chi(a)=\tilde{a}^m$, then $L\otimes_\bigO \F\cong\sigma_{m,0}$. 
		\item If $V\cong\operatorname{sp}_{\chi}$ with $\chi(a)=\tilde{a}^m$, then $L\otimes_\bigO \F\cong\sigma_{m,p-1}$. 
		\item If $V\cong I(\chi_1,\chi_2)$ with $\chi_i(a)=\tilde{a}^{m_i}$ for distinct $m_i\in\Z/(p-1)\Z$, then $L\otimes_\bigO \F$ has two Jordan-H\"{o}lder subquotients: $\sigma_{m_2,\{m_1-m_2\}}$ and $\sigma_{m_1,\{m_2-m_1\}}$ where $0<\{m\}<p-1$ and $\{m\}\equiv m \text{ mod }p-1$. 
		\item If $V\cong\Theta(\chi)$ with $\chi(c)=\tilde{c}^{i+(p+1)j}$ where $1\leq i\leq p$ and $j\in\Z/(p-1)\Z$, then $L\otimes_\bigO \F$ has two Jordan-H\"{o}lder subquotients: $\sigma_{1+j,i-2}$ and $\sigma_{i+j,p-1-i}$. Both occur unless $i=p$ (when only the first occurs), or $i=1$ (when only the second one occurs), and in either of these cases $L\otimes_\bigO \F\cong\sigma_{1+j,p-2}$ . 
	\end{enumerate}
\end{lemma}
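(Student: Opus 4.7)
The plan is to handle each of the four parts using explicit models and the exactness of induction with respect to reduction mod $p$, combined in the hardest case with a Brauer-character calculation.

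Parts (1) and (2) are essentially immediate. For (1), $\chi\circ\det$ already takes values in $\bigO^\times$, and reduces to the character whose value on $g$ is $(\det g)^m$, which is $\sigma_{m,0}$. For (2), one uses the classical integral model of $\operatorname{sp}$ inside $\Ind_{B(\F_p)}^{\GL_2(\F_p)}\mathbf{1}$ (the subspace of $\bigO$-valued functions on $\mathbb{P}^1(\F_p)$ with zero average), whose reduction mod $p$ is well known to be isomorphic to $\Sym^{p-1}\Fpbar^2 = \sigma_{0,p-1}$; twisting by $\chi\circ\det$ then gives $\sigma_{m,p-1}$.

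For part (3), the functor $\Ind_{B(\F_p)}^{\GL_2(\F_p)}$ is exact and commutes with $\otimes_{\bigO}\F$ applied to the tautological $\bigO$-lattice in $\chi_1\otimes\chi_2$, so the reduction of $I(\chi_1,\chi_2)$ is $\Ind_{B(\F_p)}^{\GL_2(\F_p)}(\bar{\chi}_1\otimes\bar{\chi}_2)$ over $\Fpbar$. The Jordan-H\"older decomposition of this mod $p$ principal series is standard: realising the representation on polynomial functions and exhibiting the unique proper $\GL_2(\F_p)$-stable subrepresentation, one identifies the two factors $\sigma_{m_2,\{m_1-m_2\}}$ and $\sigma_{m_1,\{m_2-m_1\}}$. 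Since $m_1\not\equiv m_2\pmod{p-1}$ these factors have distinct central characters, and the dimension count $(\{m_1-m_2\}+1)+(\{m_2-m_1\}+1)=p+1=\dim I(\chi_1,\chi_2)$ pins them down.

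For part (4), I would exploit the characterisation $\Theta(\chi)\otimes\operatorname{sp}\cong\Ind_{\F_{p^2}^\times}^{\GL_2(\F_p)}\chi$. Passing to the Grothendieck group of $\Fpbar[\GL_2(\F_p)]$-modules and using (2), this gives $[\overline{\Theta(\chi)}]\cdot[\sigma_{0,p-1}]=[\overline{\Ind_{\F_{p^2}^\times}^{\GL_2(\F_p)}\chi}]$. The right-hand side is computed by restricting the ordinary character of the induced representation to $p$-regular classes and decomposing in the basis of Brauer characters furnished by the $\sigma_{m,n}$; then one divides in the representation ring by $[\sigma_{0,p-1}]$. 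A cleaner substitute is to compare with an explicit $\GL_2(\F_p)$-stable $\bigO$-lattice in $\Theta(\chi)$ (one built from its construction as a Deligne--Lusztig representation, or from a concrete model inside $\Ind_{\F_{p^2}^\times}^{\GL_2(\F_p)}\chi$): successive quotients give the asserted $\sigma_{1+j,i-2}$ and $\sigma_{i+j,p-1-i}$ in the generic range. The main obstacle, and the place that requires the most care, is the boundary analysis for (4): when $i=1$ or $i=p$, one of the candidate factors becomes ill-defined (a $\Sym^{-1}$ would appear), and one must check separately that $\overline{\Theta(\chi)}$ collapses to the single irreducible $\sigma_{1+j,p-2}$; the dimension identity $\dim\Theta(\chi)=p-1=(i-1)+(p-i)$, together with matching central characters on $\F_p^\times\subset\F_{p^2}^\times$, serves as a sanity check in both the generic and boundary subcases.
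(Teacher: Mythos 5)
Your proposal takes a genuinely different route from the paper: the paper gives no argument at all, disposing of this lemma by a one-line citation to Lemma 3.1.1 of the Conrad--Diamond--Taylor paper \cite{cdt}, whereas you actually supply the computation. Parts (1)--(3) of your argument are correct and standard: the reduction of $\chi\circ\det$ is immediate, the integral model of $\operatorname{sp}$ as average-zero $\bigO$-valued functions on $\mathbb{P}^1(\F_p)$ visibly reduces to $\Sym^{p-1}$, and for the principal series one uses the compatibility of $\Ind$ with reduction modulo $p$ together with the standard two-step filtration of the mod $p$ principal series for $\GL_2(\F_p)$.

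On part (4), the Grothendieck-group manipulation as written has a small gap you should be aware of: from the identity $[\overline{\Theta(\chi)}]\cdot[\sigma_{0,p-1}]=[\overline{\Ind_{\F_{p^2}^\times}^{\GL_2(\F_p)}\chi}]$ and a computation of the right-hand side, you want to ``divide by $[\sigma_{0,p-1}]$''; this is only legitimate if multiplication by the Steinberg class is injective on the relevant subset of the Grothendieck ring, which is not automatic (the Grothendieck ring of $\Fpbar[\GL_2(\F_p)]$-modules under tensor product has zero-divisors). The fix is straightforward --- you know $[\overline{\Theta(\chi)}]$ a priori is an effective class of dimension $p-1$, and you need only exhibit one such class with the correct product, then argue uniqueness among effective classes, e.g.\ by comparing Brauer characters directly or by noting that the decomposition of $[\overline{\Ind_{\F_{p^2}^\times}^{\GL_2(\F_p)}\chi}]$ into simples already determines the possible constituents of $\overline{\Theta(\chi)}$ by central character and dimension. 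Your ``cleaner substitute'' via an explicit $\bigO$-lattice in $\Theta(\chi)$ sidesteps this entirely and is closer to how the computation is actually carried out in the literature, including in \cite{cdt}; with that route your argument is complete, including the boundary cases $i=1,p$ where the two-term filtration degenerates to the single constituent $\sigma_{1+j,p-2}$.
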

\begin{proof}
	This is Lemma 3.1.1 of \cite{cdt}.
\end{proof}
In the below, we will sometimes consider the above representations as representations of $\GL_{2}(\Z_{p})$ via the natural projection map.

We now show how one can gain information about the weights associated to a particular Galois representation by considering lifts to characteristic zero. The key is the following lemma.
\begin{lemma}
	\label{432}Let $\psi:F^{\times}\backslash(\A_{F})^{\times}\to\bigO^{\times}$ be a continuous character, and write $\overline{\psi}$ for the composite of $\psi$ with the projection $\bigO^{\times}\to \F^{\times}$. Fix a representation $\sigma$ on a finite free $\bigO$-module $W_{\sigma}$, and an irreducible representation $\sigma'$ on a finite free $\F$-module $W_{\sigma'}$. Suppose that for each $v|p$ we have $\sigma|_{U_v\cap\bigO_{F_v}^\times}=\psi^{-1}|_{U_v\cap\bigO_{F_v}^\times}$ and $\sigma'|_{U_v\cap\bigO_{F_v}^\times}=\overline{\psi}^{-1}|_{U_v\cap\bigO_{F_v}^\times}$
	
	Let $\mathfrak{m}$ be a maximal ideal of $\mathbb{T}_{S,\bigO}^{\textrm{univ}}$.
	
	Suppose that $W_{\sigma'}$ occurs as a $\prod_{v|p}U_v$-module subquotient of ${W}_{\overline{\sigma}}:=W_\sigma\otimes\F$. If $\mathfrak{m}$ is in the support of $(\sigma',\overline{\psi})$, then $\mathfrak{m}$ is in the support of $(\sigma,\psi)$.
	
	Conversely, if $\mathfrak{m}$ is in the support of $(\sigma,\psi)$, then $\mathfrak{m}$ is in the support of $(\sigma',\overline{\psi})$ for some irreducible $\prod_{v|p}U_v$-module subquotient $W_{\sigma'}$ of ${W}_{\overline{\sigma}}$.
\end{lemma}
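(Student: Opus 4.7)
The plan is to exploit two basic properties of the spaces $S_{\sigma,\psi}(U,A)$ recalled earlier. First, exactness in the coefficient module: a short exact sequence of $U$-modules (with the assumed compatibility on the centre) gives a short exact sequence of Hecke modules. Second, compatibility with base change: under the running hypothesis on $U$ the space $S_{\sigma,\psi}(U,A)$ is simply a direct sum of copies of $W_\sigma$ indexed by $I$, so for any $\bigO$-algebra $A'$ we have $S_{\sigma,\psi}(U,\bigO)\otimes_\bigO A'\cong S_{\sigma\otimes_\bigO A',\psi}(U,A')$ as Hecke modules; in particular $S_{\sigma,\psi}(U,\bigO)\otimes_\bigO\F\cong S_{\overline{\sigma},\overline{\psi}}(U,\F)$. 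The Hecke action on both sides comes from left translation of functions on $(D\otimes_F\A_F^f)^\times$ and so intertwines any $\prod_{v|p}U_v$-equivariant morphism of coefficient modules, guaranteeing that these are Hecke-module identifications.

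For the forward direction, write $\sigma'=A/B$ with $\prod_{v|p}U_v$-stable submodules $B\subset A\subset \overline{\sigma}$. Applying exactness to $0\to A\to\overline{\sigma}\to\overline{\sigma}/A\to 0$ and to $0\to B\to A\to\sigma'\to 0$ yields, respectively, an injection $S_{A,\overline{\psi}}(U,\F)\hookrightarrow S_{\overline{\sigma},\overline{\psi}}(U,\F)$ and a surjection $S_{A,\overline{\psi}}(U,\F)\twoheadrightarrow S_{\sigma',\overline{\psi}}(U,\F)$ of Hecke modules. Since localisation at $\mathfrak{m}$ is exact, $S_{\sigma',\overline{\psi}}(U,\F)_\mathfrak{m}\neq 0$ forces $S_{A,\overline{\psi}}(U,\F)_\mathfrak{m}\neq 0$ and hence $S_{\overline{\sigma},\overline{\psi}}(U,\F)_\mathfrak{m}\neq 0$. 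By base change this equals $S_{\sigma,\psi}(U,\bigO)_\mathfrak{m}\otimes_\bigO\F$, so $S_{\sigma,\psi}(U,\bigO)_\mathfrak{m}$ is nonzero.

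For the converse, choose any $\prod_{v|p}U_v$-stable Jordan--H\"older filtration of $\overline{\sigma}$ with irreducible graded pieces $\sigma'_i$. Exactness transports this to a filtration of $S_{\overline{\sigma},\overline{\psi}}(U,\F)$ by Hecke submodules with graded pieces $S_{\sigma'_i,\overline{\psi}}(U,\F)$. Now $S_{\sigma,\psi}(U,\bigO)_\mathfrak{m}$ is a nonzero direct summand of the finite free $\bigO$-module $S_{\sigma,\psi}(U,\bigO)$, hence finitely generated over $\bigO$; by Nakayama its reduction $S_{\sigma,\psi}(U,\bigO)_\mathfrak{m}\otimes_\bigO\F$ is nonzero, and base change identifies this with $S_{\overline{\sigma},\overline{\psi}}(U,\F)_\mathfrak{m}$. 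Localising the filtration above at $\mathfrak{m}$ and using exactness, at least one $S_{\sigma'_i,\overline{\psi}}(U,\F)_\mathfrak{m}$ must be nonzero, and we take $\sigma'=\sigma'_i$.

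There is no serious obstacle: the only point that needs to be handled cleanly is that all the comparisons — exactness in the coefficient module, base change from $\bigO$ to $\F$, and passage to composition factors — are genuinely compatible with the $\mathbb{T}^{\textrm{univ}}_{S,\bigO}$-action. This is immediate from the explicit description of $S_{\sigma,\psi}(U,A)$ as a sum of copies of $W_\sigma$ together with the formula defining the action of the double cosets $T_v$ and $S_v$, which manipulates the argument of the function $f$ rather than its values.
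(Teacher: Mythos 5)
Your proof is correct and fills in exactly the arguments that the paper delegates to Lemma 3.1.4 of Kisin (exactness of $W_\sigma\mapsto S_{\sigma,\psi}(U,\cdot)$ plus base change $\bigO\to\F$ for the subquotient direction) and Proposition 1.2.3 of Ash--Stevens (Nakayama/reduction plus a Jordan--H\"older filtration for the converse); the only mild care needed, which you handle, is that the Hecke operators at $v\notin S$ act purely on the argument of $f$, so all the module comparisons are $\mathbb{T}^{\mathrm{univ}}_{S,\bigO}$-equivariant. This is essentially the paper's approach, just spelled out rather than cited.
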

\begin{proof}
	The first part is proved just as in Lemma 3.1.4 of \cite{kis04}, and the second part follows from Proposition 1.2.3 of \cite{as862}.
\end{proof}

We now need a very special case (the tame case) of the inertial local Langlands correspondence of Henniart (see the appendix to \cite{bm}). If $\chi_{1}\neq\chi_{2}: \F_{p}^{\times}\to\bigO^{\times}$, let $\tau_{\chi_{1},\chi_{2}}$ be the inertial type $\chi_{1}\oplus\chi_{2}$ (considered as a representation of $I_{\Qp}$ via local class field theory). Then we let $\sigma(\tau_{\chi_{1},\chi_{2}})$ be a representation on a finite $\bigO$-module given by taking a lattice in $I(\chi_{1},\chi_{2})$. If $\chi:\F_{p}^{\times}\to\bigO^{\times}$, we let $\tau_{\chi,\chi}=\chi\oplus\chi$, and $\sigma(\tau_{\chi,\chi})$ be $\chi\circ\det$. If $\chi:\F_{p^{2}}^{\times}\to\bigO^{\times}$ with $\chi\neq\chi^{p}$, let $\tau_{\chi}$ be the inertial type $\chi\oplus\chi^{p}$ (considered as a representation of $I_{\Qp}=I_{\Q_{p^{2}}}$ via local class field theory). Then  we let $\sigma(\tau_{\chi})$ be a representation on a finite $\bigO$-module given by taking a lattice in $\Theta(\chi)$.

The following result then follows from Lemma \ref{432}, the
Jacquet-Langlands correspondence, the compatibility of the local and
global Langlands correspondences at places dividing $p$ (see
\cite{kis06}), and standard properties of the local Langlands
correspondence (see section 4 of \cite{cdt}); recall that the
definition of ``potentially Barsotti-Tate of type $\tau$'' includes
the requirement that the determinant be a finite order character times
the cyclotomic character.

\begin{lemma}\label{lem:typesversusweights}For each $v|p$ fix a tame
  type $\tau_{v}$ (i.e. $\tau_{v}=\tau_{\chi_{1},\chi_{2}}$ or
  $\tau_{\chi}$ as above). Suppose that $\rhobar$ is irreducible and modular of weight
  $\sigma$, and that $\sigma$ is a
  $\prod_{v|p}\GL_{2}(\bigO_{v})$-module subquotient of
  $\otimes_{v|p}\sigma(\tau_{v})\otimes_{\bigO} \Fpbar$. Then
  $\rhobar$ lifts to a modular Galois representation which is
  potentially Barsotti-Tate of type $\tau_{v}$ for each $v|p$.
	
	Conversely, if $\rhobar$ lifts to a modular Galois representation which is potentially Barsotti-Tate of type $\tau_{v}$ for each $v|p$, then $\rhobar$ is modular of weight $\sigma$ for some $\otimes_{v|p}\GL_{2}(\bigO_{v})$-module subquotient $\sigma$ of $\otimes_{v|p}\sigma(\tau_{v})\otimes_{\bigO} \Fpbar$.
	
\end{lemma}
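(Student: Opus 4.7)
The plan is to deduce both directions from Lemma \ref{432} by realizing $\rhobar$ in the cohomology of a definite quaternion algebra and then passing between mod $p$ and characteristic zero coefficients along the lattices $\sigma(\tau_v)$. Choose a totally definite quaternion algebra $D/F$ ramified exactly at the infinite places (adjoining an auxiliary split place if $[F:\Q]$ is odd), with maximal order $\bigO_D$. Let $U=\prod_v U_v$ be a compact open subgroup with $U_v=\GL_2(\bigO_{F_v})$ for $v\mid p$, shrunk at auxiliary places so that the standing hypothesis $(U(\A_F^f)^\times\cap t^{-1}D^\times t)/F^\times=1$ is satisfied. Fix a continuous character $\psi:F^\times\backslash(\A_F)^\times\to\bigO^\times$ whose mod $p$ reduction $\overline\psi$ matches $\det\rhobar$ and whose $v$-component for $v\mid p$ agrees with $\det\sigma(\tau_v)$ on $\bigO_{F_v}^\times$ (this is possible because each $\det\sigma(\tau_v)$ factors through a finite quotient of $I_{F_v}$ paired with the cyclotomic character, matching the determinant condition built into the definition of \emph{potentially Barsotti-Tate of type} $\tau_v$). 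Let $\mathfrak{m}\subset\T^{\operatorname{univ}}_{S,\bigO}$ be the maximal ideal whose attached residual Galois representation is $\rhobar$.

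For the forward implication, suppose $\rhobar$ is modular of weight $\sigma$, so $S_{\sigma,\overline\psi}(U,\F)_\mathfrak{m}\neq 0$. Set $W_\sigma:=\otimes_{v\mid p,\bigO}\sigma(\tau_v)$, which by hypothesis has $\sigma$ as a $\prod_{v\mid p}U_v$-subquotient of its reduction $\overline{W_\sigma}$. Applying the first part of Lemma \ref{432}, we conclude that $\mathfrak{m}$ is in the support of $(W_\sigma,\psi)$, i.e.\ $S_{W_\sigma,\psi}(U,\bigO)_\mathfrak{m}\otimes_\bigO\Qpbar\neq 0$. This produces a quaternionic automorphic form $f$ for $D^\times$ of the desired weight and level; via Jacquet--Langlands, $f$ transfers to a Hilbert modular eigenform $f'$ on $\GL_2/F$ with $\rhobar_{f'}\cong\rhobar$. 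By the compatibility of the local and global Langlands correspondences at places dividing $p$ (as used in \cite{kis06}) together with the standard statement that the $\GL_2(\Z_p)$-type $\sigma(\tau_v)$ occurs in $\pi_v|_{\GL_2(\Z_p)}$ if and only if $\pi_v$ has inertial type $\tau_v$ (section 4 of \cite{cdt}), the associated Galois representation $\rho_{f'}$ is potentially Barsotti-Tate of type $\tau_v$ at each $v\mid p$, as required.

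For the converse, start with a modular lift $\rho$ of $\rhobar$ which is potentially Barsotti-Tate of type $\tau_v$ at every $v\mid p$; let $\pi$ be the automorphic representation of $\GL_2(\A_F)$ attached to $\rho$, transferred via Jacquet--Langlands to $D^\times(\A_F)$. The inertial local Langlands correspondence characterising $\sigma(\tau_v)$ ensures that $\pi_v|_{\GL_2(\bigO_{F_v})}$ contains $\sigma(\tau_v)$ for every $v\mid p$, so $S_{W_\sigma,\psi}(U,\bigO)_\mathfrak{m}$ is nonzero for $W_\sigma=\otimes_v\sigma(\tau_v)$ (after adjusting $U$ at auxiliary finite places, using new-vector theory). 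By the second part of Lemma \ref{432}, there exists some irreducible $\prod_{v\mid p}U_v$-subquotient $\sigma$ of $\overline{W_\sigma}$ such that $\mathfrak{m}$ lies in the support of $(\sigma,\overline\psi)$, which by definition means $\rhobar$ is modular of weight $\sigma$.

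The main obstacle lies in assembling the various compatibilities precisely — in particular, matching up central characters of $\sigma(\tau_v)$ and $\psi$ with $\det\rho$ and the cyclotomic twist baked into the definition of type $\tau_v$, and verifying that the $\GL_2(\bigO_{F_v})$-type $\sigma(\tau_v)$ extracted from $\pi_v$ by local Langlands indeed corresponds bijectively to the inertial type $\tau_v$ appearing in $\rho|_{G_{F_v}}$. Both points reduce, after the choice of central character above, to the cases of the inertial local Langlands correspondence worked out explicitly in section 4 of \cite{cdt} (and summarised in the paragraph preceding the lemma), so no new argument beyond careful bookkeeping is needed.
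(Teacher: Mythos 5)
Your proof is correct and takes essentially the same route as the paper, which states the lemma as a one-sentence consequence of Lemma~\ref{432}, the Jacquet--Langlands correspondence, local--global compatibility at $p$, and the (tame) inertial local Langlands correspondence from section~4 of \cite{cdt}; your write-up simply unpacks exactly that chain of reductions, including the bookkeeping of central characters and the lifting of $\overline\psi$ to $\psi$ that the paper flags implicitly via its remark about the determinant condition in ``potentially Barsotti--Tate of type $\tau$''.
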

\subsection{}  We need a slight refinement of this result in some cases, to take
  account of Hecke eigenvalues at places $v|p$. Suppose firstly that
  $A=\F$ and $\sigma$ is irreducible. We can extend the action of
  $\GL_2(\bigO_{v})$ on $\sigma$ to an action of $\GL_2(F_v)\cap
  M_2(\bigO_{F_v})$; in the case that $\sigma=\Sym^{n}\Fpbar^{2}$, we
  extend in the obvious fashion, by thinking of $\sigma$
  as homogeneous polynomials of degree $n$ in two variables $X$, $Y$,
  with $\GL_2(F_v)\cap
  M_2(\bigO_{F_v})$ acting by \[
  \left(\begin{pmatrix}
    a&b\\c&d
  \end{pmatrix}F\right)(X,Y)=F(aX+cY,bX+dY),\] and in the general case
  that $\sigma=\det{}^{m}\Sym^{n}\Fpbar^{2}$ we twist the above action
  by $\chi^{m}\circ\det$, where $\chi$ is the character
  $\Qp^{\times}\to\Fpbar^{\times}$ with $\chi(p)=1$ and
  $\chi|_{\Z_{p}^{\times}}$ equal to reduction mod $p$. We denote this
  action by $\widetilde{\sigma}$. For each $v|p$ we let
  $\GL_2(F_v)\cap M_2(\bigO_{F_v})$ act on the space of functions
  $(D\otimes_F \A_F^f)^\times\to W_\sigma$ via
  $u(f)(g):=\widetilde{\sigma}(u)(f(gu))$, so that for each $g\in
  \GL_2(F_v)\cap M_2(\bigO_{F_v})$ there is a corresponding Hecke operator $[UgU]$
  acting on $S_{\sigma,\psi}(U,\F)$. Let
  $\mathbb{T}_{S^{p},\F}^{\textrm{univ}}=\mathbb{T}_{S,\F}^{\textrm{univ}}[T_{v},S_{v}]_{v|p}$,
  and extend the action of $\mathbb{T}_{S,\F}^{\textrm{univ}}$ on $S_{\sigma,\psi}(U,\F)$ to one
  of $\mathbb{T}_{S^{p},\F}^{\textrm{univ}}$ by letting $T_{v}$,
  $S_{v}$ act via the Hecke operators corresponding to $\bigl(
\begin{smallmatrix}
	\pi_{v}&0\\0&1
\end{smallmatrix}
\bigr)$ and $\bigl(
\begin{smallmatrix}
	\pi_{v}&0\\0&\pi_{v}
\end{smallmatrix}
\bigr)$ respectively. Any maximal ideal $\mathfrak{m}$ of
$\mathbb{T}_{S^{p},\F}$ induces a maximal ideal $\mathfrak{m}'$ of
$\mathbb{T}_{S,\F}$, and we write 
$\rhobar_{\mathfrak{m}}$ for $\rhobar_{\mathfrak{m}'}$.

We also need to consider the case that $A=\bigO$ and
$\sigma=\prod_{w|p}\sigma_{w}$, where for some $v|p$ we have
$\sigma_{v}=\sigma(\tau_{v})$,
$\tau_{v}=\chi_{1,v}\oplus\chi_{2,v}$. We suppose in order to simplify
the discussion that $\chi_{{1,v}}=1$; as above, the operators in the
general case are easily defined via twisting. If $\chi_{2,v}=1$ then
$\sigma(\tau_{v})$ is trivial, and we can define $T_{v}$ in the usual
fashion. Suppose now that $\chi_{2,v}\neq 1$. Then
$\sigma(\tau_{v})=\Ind_{B(\F_p)}^{\GL_{2}(\F_{p})}1\otimes\chi_{2,v}$
(the $\bigO$-valued induction). The dual of $\sigma(\tau_{v})$ is
$\sigma(\tau_{v})^{*}=\Ind_{B(\F_p)}^{\GL_{2}(\F_{p})}1\otimes\chi_{2,v}^{-1}$,
and there is a natural $\GL_{2}(\F_{p})$-equivariant pairing
$\langle,\rangle_{v}:\sigma(\tau_{v})\times\sigma(\tau_{v})^{*}\to\bigO$. Let
$L_{\sigma(\tau_{v})^{*}}$ be the underlying $\bigO$-module of
$\sigma(\tau_{v})^{*}$. Then we let $x_{v}\in
L_{\sigma(\tau_{v})^{*}}$ be the function in
$\Ind_{B(\F_p)}^{\GL_{2}(\F_{p})}1\otimes\chi_{2,v}^{-1}$ supported on
$B(\F_{p})$ and sending the identity to $1\in\bigO$. Note that this
function is $\Gamma_{1}$-invariant, where $\Gamma_{1}$ is the
inverse image in $\GL_{2}(\Z_{p})$ of the subgroup of $\GL_2(\F_p)$ of
elements of the form $
\begin{pmatrix}
  a&b\\0 &1
\end{pmatrix}$. Then if $f\in
S_{\sigma,\psi}(U,\bigO)$, we can consider the function
$f_{x}:D^{\times}\backslash(D\otimes_{F}\A_{F}^{f})^{\times}\to\otimes_{w\neq
  v}\sigma_{w}$
given by $\langle f,x_{v}\rangle_{v}$. The map $f\mapsto f_{x}$ is an
injective map from $S_{\sigma,\psi}(U,\bigO)$ to the space of
functions
$D^{\times}\backslash(D\otimes_{F}\A_{F}^{f})^{\times}\to\otimes_{w\neq
  v}\sigma_{w}$ which are $\Gamma_{1,v}$-invariant, where
$\Gamma_{1,v}\subset U_{v}$ is the subgroup defined above. This latter
space has a natural action of a Hecke operator
$U_{v}=\bigl[\Gamma_{1,v}\left(\begin{smallmatrix} p&0\\0&1
\end{smallmatrix}\right) \Gamma_{1,v}\bigr]$. In fact, $U_v$ acts on the
image of $S_{\sigma,\psi}(U,\bigO)$; an easy way to see this is to
note that an alternative description of the map $f\mapsto f_x$ is
provided by using Frobenius reciprocity \[\Hom_{GL_2(\F_p)}(\cdot,\Ind_{B(\F_p)}^{\GL_2(\F_p)}1\otimes\chi_{2,v})=\Hom_{B(\F_p)}(\cdot,1\otimes\chi_{2,v})\] to identify
$S_{\sigma,\psi}(U,\bigO)$ with a space of functions to $\bigO\otimes\otimes_{w\neq
  v}\sigma_{w}$ which transform via $1\otimes\chi_{2,v}$ under the standard
Iwahori subgroup of $U_v$. This is obviously a subspace of the
functions which are $\Gamma_{1,v}$-invariant, and is easily seen to be $U_v$-stable.

Assume now that $\chi_{2,v}\neq 1$. Then for some
$1\leq r\leq p-2$ we have
$\sigma(\tau_{v})\otimes_{\bigO}\F=\Ind_{B(\F_{p})}^{\GL_{2}(\F_{p})}1\otimes
\delta^{r}$, where $1\otimes\delta^{r}\left(\left(\begin{smallmatrix}
      a&b\\0&d
\end{smallmatrix}\right)\right)=d^{r}$. Then (for example, by Lemma 3.2
of \cite{as86}) there is an exact sequence $$0\to \sigma_{0,r}\to
\Ind_{B(\F_{p})}^{\GL_{2}(\F_{p})}1\otimes \delta^{r}\to
\sigma_{r,p-1-r}\to 0.$$

\begin{lemma}\label{lem:compatibility of Hecke at p}
   Let $\sigma'=\sigma_{0,r}\otimes(\otimes_{w\ne
    v}\sigma_w\otimes_\bigO \F)$. The natural map
  $S_{\sigma',\psi}(U,\F)\to S_{\sigma,\psi}(U,\F)$ is equivariant for
  the actions of $T_v$ on the left hand side and $U_v$ on the right
  hand side.
\end{lemma}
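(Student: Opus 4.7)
The plan is to verify the intertwining by direct computation with coset decompositions, the crucial point being that the one extra coset representative appearing for $T_v$ (but not for $U_v$) contributes zero after pairing against $x_v$. Writing $p=\pi_v$ (recall $F_v=\Qp$), we use the decompositions
\begin{align*}
U_v\begin{pmatrix} p & 0 \\ 0 & 1\end{pmatrix}U_v &= \coprod_{a=0}^{p-1} U_v \begin{pmatrix} p & a \\ 0 & 1\end{pmatrix}\ \sqcup\ U_v \begin{pmatrix} 1 & 0 \\ 0 & p\end{pmatrix}, \\
\Gamma_{1,v}\begin{pmatrix} p & 0 \\ 0 & 1\end{pmatrix}\Gamma_{1,v} &= \coprod_{a=0}^{p-1} \Gamma_{1,v} \begin{pmatrix} p & a \\ 0 & 1\end{pmatrix}.
\end{align*}
The first defines $T_v$ on $S_{\sigma',\psi}(U,\F)$ through the extended action $\widetilde{\sigma_{0,r}}$, while the second defines $U_v$ on the target.

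The key local computation makes the composite ``inclusion followed by $\langle-,x_v\rangle_v$'' explicit. Writing $F(X,Y)=\sum_{i=0}^r c_i X^i Y^{r-i}\in\sigma_{0,r}$, the extended action reduces mod $p$ to
\begin{align*}
\widetilde{\sigma_{0,r}}\!\begin{pmatrix} 1 & 0 \\ 0 & p\end{pmatrix}F(X,Y) &\equiv F(X,0) = c_r X^r, \\
\widetilde{\sigma_{0,r}}\!\begin{pmatrix} p & a \\ 0 & 1\end{pmatrix}F(X,Y) &\equiv F(0,aX+Y) = c_0(aX+Y)^r.
\end{align*}
The inclusion $\sigma_{0,r}\hookrightarrow \Ind_{B(\F_p)}^{\GL_2(\F_p)} 1\otimes\delta^r=\sigma(\tau_v)\otimes\F$ comes, via Frobenius reciprocity, from the unique (up to scalar) $B(\F_p)$-equivariant projection $F\mapsto F(0,1)=c_0$; and the pairing with $x_v$, supported on $B(\F_p)$ with $x_v(1)=1$, evaluates the associated induced function at the identity. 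Hence this composite is exactly the $Y^r$-coefficient projection $F\mapsto c_0$.

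Combining, the coset $\diag(1,p)$ lands in $\F\cdot X^r$, which has vanishing $Y^r$-coefficient, so contributes zero after pairing with $x_v$; each coset $\bigl(\begin{smallmatrix} p & a \\ 0 & 1\end{smallmatrix}\bigr)$ preserves the $Y^r$-coefficient. With the convention $T_v f'(g)=\sum_i \widetilde{\sigma_{0,r}}(g_i)f'(gg_i)$, applying the pairing componentwise yields $(T_v f')_x(g)=\sum_{a=0}^{p-1} f'_x\bigl(g\bigl(\begin{smallmatrix} p & a \\ 0 & 1\end{smallmatrix}\bigr)\bigr)=U_v(f'_x)(g)$, which gives the desired equivariance (noting that $f'_x$ is $\Gamma_{1,v}$-invariant because it transforms via $1\otimes\chi_{2,v}$ under the standard Iwahori, and $\chi_{2,v}$ is trivial on $\Gamma_{1,v}$).

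The main obstacle is nothing deep but rather careful bookkeeping with conventions (normalisation of $\widetilde{\sigma}$, direction of the Hecke action, the explicit form of the Frobenius-reciprocity inclusion, and of the pairing $\langle -,x_v\rangle_v$). The conceptual point is clean: the submodule $\sigma_{0,r}\subset\Ind 1\otimes\delta^r$ is detected by the $Y^r$-coefficient, which is orthogonal to the image of the action of the extra coset representative $\diag(1,p)$, and this is exactly what kills the extra term in $T_v$.
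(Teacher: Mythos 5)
Your proof is correct and follows essentially the same strategy as the paper's: reduce the claim to an explicit coset computation, note that the $\left(\begin{smallmatrix}p&a\\0&1\end{smallmatrix}\right)$ representatives preserve the relevant linear functional while the extra representative $\left(\begin{smallmatrix}1&0\\0&p\end{smallmatrix}\right)$ is annihilated by it. The only difference is cosmetic: the paper identifies the functional as pairing against the image of $x_v$ (namely $X^r$) under the explicit duality pairing on $\sigma_{0,r}\times\sigma_{p-1-r,r}$, whereas you describe the same functional directly as the $Y^r$-coefficient projection arising from the Frobenius-reciprocity description of the inclusion $\sigma_{0,r}\hookrightarrow\Ind_{B(\F_p)}^{\GL_2(\F_p)}1\otimes\delta^r$; these agree, and the rest of the computation is identical.
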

\begin{proof}Dually to the above short exact sequence, we have a short exact
sequence $$0\to \sigma_{0,p-1-r}\to
\Ind_{B(\F_{p})}^{\GL_{2}(\F_{p})}1\otimes \delta^{-r}\to
\sigma_{p-1-r,r}\to 0.$$Now, the duality pairing induces a
$\GL_{2}(\Z_{p})$-equivariant pairing
$\sigma_{0,r}\times\sigma_{p-1-r,r}\to\overline{\F}_{p}$. This pairing
can be made completely explicit (see the proof of Lemma 3.1 of
\cite{as86}), and we find that if \[F(X,Y)=\sum_{j=0}^ra_jX^{r-j}Y^j,\
G(X,Y)=\sum_{j=0}^rb_jX^{r-j}Y^j,\]\[\langle
F,G\rangle=\sum_{j=0}^r{r\choose j}^{-1}(-1)^ja_jb_{r-j}.\] Then for any homogenous polynomial $F(X,Y)=\sum_{j=0}^ra_jX^{r-j}Y^j$ of
degree $r$ and any $i\in\Z_{p}$ we have

\begin{align*}\langle\left(\begin{smallmatrix}
	p&i\\0&1
\end{smallmatrix}\right)F(X,Y),X^{r}\rangle&=\langle\sum_{j=0}^ra_j(pX)^{r-j}(iX+Y)^j,X^r\rangle\\
&=\langle a_r(iX+Y)^r,X^r\rangle\\&=(-1)^ra_r\\&=\langle\sum_{j=0}^ra_jX^{r-j}Y^j,X^r\rangle\\
&=\langle F,X^{r}\rangle,\end{align*}
\begin{align*}\langle\left(\begin{smallmatrix}
	1&0\\0&p
\end{smallmatrix}\right)F,X^{r}\rangle&=\langle\sum_{j=0}^ra_jX^{r-j}(pY)^j,X^r\rangle\\&=\langle
a_0X^r,X^r\rangle\\&= 0.\end{align*}Now (see for example the proof of
Lemma 3.1 of \cite{as86}) we can take the map $\Ind_{B(\F_{p})}^{\GL_{2}(\F_{p})}1\otimes \delta^{-r}\to
\sigma_{p-1-r,r}$ in the short exact sequence above to be given by \[\phi\mapsto\sum_{i\in\F_p}\phi\left(
\begin{pmatrix}
  1 &0\\i&1
\end{pmatrix}\right)(X-iY)^r+\phi\left(
\begin{pmatrix}
  0 &1\\1&0
\end{pmatrix}\right)Y^r,\]
so that the image of $x_{v}$ in
$\sigma_{p-1-r,r}$ is just $X^{r}$. Taking an explicit set of coset
representatives for $U_{v}$, we see that if $f\in
S_{\sigma_{0,r}\otimes\sigma'',\psi}(U,\F)$, then

\begin{align*}\langle U_{v}f(g),X^{r}\rangle &=\langle\sum_{i=0}^{p-1}f(g 		\left(\begin{smallmatrix}
			p&i\\0&1
		\end{smallmatrix}\right)),X^{r}\rangle\\
&=\langle\sum_{i=0}^{p-1}\left(\begin{smallmatrix}
	p&i\\0&1
\end{smallmatrix}\right)f(g\left(\begin{smallmatrix}
	p&i\\0&1
\end{smallmatrix}\right)),X^{r}\rangle\\
&=\langle\sum_{i=0}^{p-1}\left(\begin{smallmatrix}
	p&i\\0&1
\end{smallmatrix}\right)f(g\left(\begin{smallmatrix}
	p&i\\0&1
\end{smallmatrix}\right))+	\left(\begin{smallmatrix}
		1&0\\0&p
	\end{smallmatrix}\right)f(g	\left(\begin{smallmatrix}
			1&0\\0&p
		\end{smallmatrix}\right)),X^{r}\rangle\\
		&=\langle T_{v}f(g),X^{r}\rangle.
\end{align*}
It is easy to check that the map $f\mapsto\langle f, X^{r}\rangle$ is
injective on $S_{\sigma_{0,r}\otimes\sigma'',\psi}(U,\F)$, so we
obtain the claimed compatibility of $U_{v}$ and $T_{v}$.\end{proof}

\subsection{}  The final ingredient we need before we can deduce results on the conjectures of \cite{bdj} is a characterisation of when mod $p$ representations have Barsotti-Tate lifts of particular types. Such results are proved in \cite{sav042}, and we now recall them:
\begin{thm}
	\label{442}Suppose that $\overline{\rho}:G_{\Q_p}\to\GL_2(\Fpbar)$. Then:
	\begin{enumerate}
		\item $\rhobar$ has a potentially Barsotti-Tate lift of type $\tilde{\omega}^i\oplus\tilde{\omega}^j$  only if $\rhobar|_{I_p}\cong \bigl(
		\begin{smallmatrix}
			\omega^{1+i}&*\\0&\omega^j
		\end{smallmatrix}
		\bigr),$ $ \bigl(
		\begin{smallmatrix}
			\omega^{1+j}&*\\0&\omega^i
		\end{smallmatrix}
		\bigr)$ (with $\rhobar$ peu ramifi\'{e}e if $i=j$) or $\bigl(
		\begin{smallmatrix}
			\omega_2^{k}&0\\0&\omega_2^{pk}
		\end{smallmatrix}
		\bigr)$ where $k=1+\{j-i\}+(p+1)i$. The converse holds if $\rhobar$ is decomposable or has only scalar endomorphisms.
		\item $\rhobar$ has a potentially Barsotti-Tate lift of type $\tilde{\omega_2}^m\oplus\tilde{\omega_2}^{pm}$, $(p+1)\nmid m$, only if $\rhobar|_{I_p}\cong $ $\bigl(
		\begin{smallmatrix}
			\omega_2^{1+m}&0\\0&\omega_2^{p(1+m)}
		\end{smallmatrix}
		\bigr)$, $\bigl(
		\begin{smallmatrix}
			\omega_2^{p+m}&0\\0&\omega_2^{1+pm}
		\end{smallmatrix}
		\bigr)$, $\bigl(
		\begin{smallmatrix}
			\omega^{i+j}&*\\0&\omega^{1+j}
		\end{smallmatrix}
		\bigr),$ (where $\rhobar$ is peu ramifi\'{e}e if $i=2$) or $ \bigl(
		\begin{smallmatrix}
			\omega^{1+j}&*\\0&\omega^{i+j}
		\end{smallmatrix}
		\bigr)$ (where $\rhobar$ is peu ramifi\'{e}e if $i=p-1$). Here $m=i+(p+1)j$, $1\leq i\leq p$, $j\in\Z/(p-1)\Z$. The converse holds if $\rhobar$ has only scalar endomorphisms.
	\end{enumerate}
\end{thm}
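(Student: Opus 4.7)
The plan is to use integral $p$-adic Hodge theory to classify potentially Barsotti--Tate lifts of each type. A representation $\rho:G_{\Qp}\to\GL_{2}(\Qpbar)$ that is potentially Barsotti--Tate of type $\tau$ becomes Barsotti--Tate on restriction to $G_{L}$, where $L$ is the field cut out by $\tau$, so an $\bigO$-lattice in such a $\rho$ corresponds to a strongly divisible (Breuil) module over $L$ equipped with a descent datum for $\Gal(L/\Qp)$. Reducing this module modulo $p$ recovers $\rhobar|_{G_{L}}$ together with its descent datum, which in turn determines $\rhobar|_{I_{p}}$.

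First I would classify the relevant Breuil modules. For part (1), $L$ is the (unique) tame totally ramified extension of $\Qp$ of degree $p-1$ on which the descent datum acts through $\tilde{\omega}^{i}\oplus\tilde{\omega}^{j}$; for part (2), $L$ is the analogous tame totally ramified extension of the unramified quadratic extension $\Q_{p^{2}}$ cut out by $\tilde{\omega_{2}}^{m}$. In each case I would fix a basis adapted to the descent datum and enumerate the possible filtrations and Frobenius structures, subject to the Hodge--Tate weights being $\{0,1\}$ and the strong divisibility condition. This is a combinatorial exercise producing an explicit finite list of Breuil modules parametrised by a handful of integer and continuous parameters.

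Next I would compute the mod-$p$ reduction of each module on that list. Passing to the associated \'etale $\varphi$-module and descending through the appropriate norm field recovers $\rhobar|_{I_{p}}$ in closed form. Matching against the statement both forces the displayed shapes and accounts for the numerical constraints, including the exponent $k=1+\{j-i\}+(p+1)i$ in (1) and the niveau-two diagonal shape in (2); these arise from Breuil modules that do not admit a descent-compatible decomposition into rank-$1$ submodules. The \emph{peu ramifi\'{e}e} condition appears precisely when the only Breuil module with the prescribed reduction does admit such a decomposition, which forces the associated extension class to be peu rather than tr\`es ramifi\'{e}e.

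For the converse direction, the hypothesis that $\rhobar$ is decomposable or has only scalar endomorphisms controls automorphisms of liftings, so that exhibiting one Breuil module with the desired reduction (as produced in the previous step) suffices to produce a potentially Barsotti--Tate lift. The main obstacle will be the boundary and exceptional cases --- $i=j$ in (1) and $i=1$ or $i=p$ in (2) --- where the Breuil modules can degenerate and one must carefully distinguish peu- from tr\`es ramifi\'{e}e extensions, and where the integrality of the Frobenius matrix entries must be checked by hand. The intricate Breuil-module bookkeeping, rather than any conceptual difficulty, is the heart of the argument.
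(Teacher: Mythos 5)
The theorem you are asked to prove is, in the paper, essentially a repackaging of results already in the literature: the proof given is a citation of Theorems 1.3, 1.4, 6.11 and 6.12 of Savitt's paper (the reference \cite{sav042}), with two side remarks --- one that the case where the type is a twist of the trivial type is excluded from Savitt's theorems but is ``standard'', and one that when $\rhobar$ is decomposable (rather than having scalar endomorphisms) the converse direction follows by simply exhibiting decomposable (i.e.\ sums of characters) potentially Barsotti--Tate lifts. Your proposal is, in effect, a plan to reprove Savitt's theorems from scratch: the Breuil-module-with-descent-data computation over the tame totally ramified extension cut out by the type is precisely the technology Savitt uses, so at that level your approach is the same. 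The difference is one of scope --- you are proposing to redo a substantial piece of work that the paper deliberately treats as a black box.

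Two more specific issues with your converse direction. First, Savitt's existence theorems are stated under the hypothesis that $\rhobar$ has only scalar endomorphisms; they do not directly cover the decomposable case. Your sentence that the decomposable or scalar-endomorphism hypothesis ``controls automorphisms of liftings, so that exhibiting one Breuil module with the desired reduction suffices'' is not really what happens: there is no single automorphism-counting lemma that unifies the two cases. The decomposable case is handled by a much shorter argument that bypasses Breuil modules entirely --- one just writes down a direct sum of two suitable (Lubin--Tate) characters lifting the two characters on the diagonal and checks it is potentially Barsotti--Tate of the given type. Second, you do not flag the excluded case where the type is a twist of the trivial type (i.e.\ $i=j$ in part (1)); this case falls outside Savitt's classification and needs the separate ``standard'' argument (essentially the existence of ordinary or crystalline weight-$2$ lifts and the peu/tr\`es ramifi\'ee dichotomy). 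If your intent was to give an actual self-contained proof, both of these need to be made explicit.
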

\begin{proof}
	If $\rhobar$ has only scalar endomorphisms, then this follows from Theorems 1.3 and 1.4 of \cite{sav042} (apart from the case where the type is a twist of the trivial type, which is standard). In the cases where $\rhobar$ has non-trivial endomorphisms the proof is an easy consequence of the methods of \cite{sav042}; in the ``only if'' direction, the result follows from Theorems 6.11 and 6.12 of \cite{sav042}. In the ``if'' direction, one may simply consider decomposable lifts.
\end{proof}

We now state the precise form of the conjecture for totally real fields in which $p$ splits completely. Let $\rhobar:G_{\Q_p}\to\GL_2(\F_p)$. 
\begin{defn}\label{defn:statement of weight conjecture for GL2 Qp}We define a set of weights $W(\rhobar)$ as follows.	Suppose that $\rhobar$ is irreducible. Then $\sigma_{m,n}\in W(\rhobar)$ if and only if $\rhobar|_{I_p}\cong\bigl(
	\begin{smallmatrix}
		\omega_2^{n+1}&0\\0&\omega_2^{p(n+1)}
	\end{smallmatrix}
	\bigr)\otimes\omega^m$. If $\rhobar$ is reducible, $\sigma_{m,n}\in W(\rhobar)$ only if $\rhobar|_{I_p}\cong\bigl(
	\begin{smallmatrix}
		\omega^{n+1}&*\\0&1
	\end{smallmatrix}
	\bigr)\otimes\omega^m$. If this is the case then $\sigma_{m,n}\in W(\rhobar)$ unless $n=0$ and $\rhobar$ is  tr\'{e}s ramifi\'{e}e.
\end{defn}

If now $\rhobar:G_F\to\GL_2(\Fpbar)$ is modular, we define a set of weights $W(\rhobar)$ in the obvious fashion; that is, $W(\rhobar)$ consists of precisely the representations $$\sigma_{\vec{m},\vec{n}}=\otimes_{v|p,\sigma_{m_{v},n_{v}}\in W(\rhobar|_{G_{F_v}})}\sigma_{m_{v},n_{v}}$$of $\GL_2(\bigO_F/p)=\prod_{v|p}\GL_2(\F_p)$. Assume from now on that $\rhobar|_{G_{F(\zeta_{p})}}$ is irreducible. We wish to prove that $W(\rhobar)$ is precisely the set of weights for which $\rhobar$ is modular, by using Theorem \ref{442}, Lemma \ref{441}, Lemma \ref{lem:typesversusweights} and corollary \ref{1.6}. Firstly, we will prove that if $\rhobar$ is modular of some weight, then this weight is contained in $W(\rhobar)$. We do this by using Lemma \ref{441} and Lemma \ref{lem:typesversusweights} to show that if $\rhobar$ is modular of some weight $\sigma$, then $\rhobar$ must have a potentially Barsotti-Tate lift of a particular type, and then using Theorem \ref{442} to obtain conditions on $\rhobar|_{G_{F_v}}$.

Once we have proved that any weight for $\rhobar$ is contained in $W(\rhobar)$, we will use Corollary \ref{1.6} to prove the converse. In combination with theorem \ref{442} we are able to produce modular lifts of some specified types, and then Lemma \ref{441} gives a list of irreducible representations, at least one of which must be a weight for $\rhobar$. If only one of these weights is contained in $W(\rhobar)$, then $\rhobar$ must be modular of this weight. We cannot always find a unique weight in this fashion, but in the cases where we cannot we are able to make additional arguments to complete the proof.


We say that $\rhobar|_{G_{F_v}}$ is modular of weight $\sigma$ if $\rhobar$ is modular of some weight $\otimes_{w|p}\sigma_w$, with $\sigma_v=\sigma$. 
\begin{lemma}
	\label{6.1}If $\rhobar|_{G_{F_v}}$ is modular of weight $\sigma=\sigma_{m,n}$, then $\sigma\in W(\rhobar|_{G_{F_v}})$.
\end{lemma}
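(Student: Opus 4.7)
The plan is to deduce the shape of $\rhobar|_{G_{F_v}}|_{I_p}$ from Theorem \ref{442} by producing a carefully chosen global lift of $\rhobar$ whose local behavior at $v$ reflects the weight $\sigma_{m,n}$. Lemma \ref{441} provides, for each $\sigma_{m,n}$, an irreducible characteristic zero $\GL_2(\Z_p)$-representation with $\sigma_{m,n}$ as a Jordan-H\"older factor of its reduction: a principal series $I(\tilde\omega^{m+n},\tilde\omega^m)$ when $1\le n\le p-2$, the one-dimensional representation $\chi\circ\det$ with $\chi(a)=\tilde a^m$ when $n=0$, and a Steinberg twist $\operatorname{sp}_\chi$ when $n=p-1$. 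Combining Lemma \ref{432} with the compatibility of local and global Langlands at $v\mid p$ as in Lemma \ref{lem:typesversusweights}, the modularity hypothesis yields a modular global lift of $\rhobar$ which is potentially Barsotti-Tate of type $\tau_v$ at $v$ in the first two cases, and which has local Steinberg component at $v$ in the third.

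In the two potentially Barsotti-Tate cases, Theorem \ref{442}(1) constrains $\rhobar|_{G_{F_v}}|_{I_p}$ to one of three explicit shapes. For the irreducible shape, the formula $k=1+\{j-i\}+(p+1)i$ from Theorem \ref{442}(1)(c) with $i=m+n$, $j=m$ becomes $k=(p+1)m+p(n+1)$, and after using $p^2\equiv 1\pmod{p^2-1}$ to exchange the two diagonal entries this is $\omega^m\otimes\diag(\omega_2^{n+1},\omega_2^{p(n+1)})$, matching exactly the condition for $\sigma_{m,n}\in W(\rhobar|_{G_{F_v}})$ in the irreducible case. The reducible shape (a) of Theorem \ref{442}(1) is $\omega^m\otimes\bigl(\begin{smallmatrix}\omega^{n+1}&*\\0&1\end{smallmatrix}\bigr)$, matching the $W$-condition, and the peu ramifi\'ee requirement for $n=0$ is immediate from Theorem \ref{442}(1) with $i=j$. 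In the Steinberg case $n=p-1$, the local Steinberg component at $v$ forces $\rho|_{G_{F_v}}$ to be reducible with the required inertial shape $\omega^m\otimes\bigl(\begin{smallmatrix}\omega&*\\0&1\end{smallmatrix}\bigr)$; note that irreducible $\rhobar|_{G_{F_v}}$ cannot be modular of weight $\sigma_{m,p-1}$ since any Steinberg lift is reducible.

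The main obstacle is the swapped reducible shape $\omega^{m+n}\otimes\bigl(\begin{smallmatrix}\omega^{1-n}&*\\0&1\end{smallmatrix}\bigr)$ coming from Theorem \ref{442}(1)(b), which would correspond to the other Jordan-H\"older factor $\sigma_{m+n,p-1-n}$ of $\sigma(\tau_v)\otimes\F$ rather than to $\sigma_{m,n}$. I would eliminate it by invoking Lemma \ref{lem:compatibility of Hecke at p}: the $T_v$-eigenvalue of the mod $p$ weight-$\sigma_{m,n}$ eigenclass equals the $U_v$-eigenvalue on the characteristic zero lift of weight $\sigma(\tau_v)$, and this $U_v$-eigenvalue is the Frobenius eigenvalue of the unramified quotient of the (ordinary) reducible lift, thereby selecting shape (a) over the swapped shape. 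An alternative is to intersect the constraints from Theorem \ref{442}(1) with those from Theorem \ref{442}(2), applied to the cuspidal companion type $\Theta(\chi)$ from Lemma \ref{441}(4) also containing $\sigma_{m,n}$ as a Jordan-H\"older factor, leaving only the desired shape (a).
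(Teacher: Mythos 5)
Your core strategy for $1\le n\le p-2$ matches the paper: use the principal-series type $\tilde\omega^{m+n}\oplus\tilde\omega^m$, obtain a potentially Barsotti--Tate lift via Lemma \ref{lem:typesversusweights}, read off the constraints from Theorem \ref{442}(1), and then eliminate the problematic ``swapped'' reducible shape $\bigl(\begin{smallmatrix}\omega^{m+1}&*\\0&\omega^{m+n}\end{smallmatrix}\bigr)$. Your ``alternative'' elimination via the cuspidal companion $\Theta(\chi)$ is exactly what the paper does (the two constraint sets from Theorem \ref{442}(1) and \ref{442}(2) are incompatible with the swapped shape when $n\ne 0, p-1$), so that part is sound.

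Your primary suggested route for eliminating the swapped shape, via Lemma \ref{lem:compatibility of Hecke at p} and a $U_v$-eigenvalue, does not work as stated. The hypothesis ``$\rhobar|_{G_{F_v}}$ is modular of weight $\sigma_{m,n}$'' does not provide a $T_v$-eigenclass at $v$; the definition of modular of weight $\sigma$ involves only Hecke operators away from $p$. Even if one restricted to a $T_v$-eigenvector, the eigenvalue could be $0$ (supersingular), in which case no ordinary component of the deformation space is singled out. So the Hecke argument cannot replace the cuspidal-companion argument here.

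The case $n=p-1$ is where the proposal genuinely fails. You claim that modularity of weight $\sigma_{m,p-1}$ forces a Steinberg local component at $v$ and hence a reducible $\rhobar|_{G_{F_v}}$. Neither assertion is correct: the $K$-type $\operatorname{sp}_\chi$ appears in the restriction to $\GL_2(\bigO_{F_v})$ of both the Steinberg representation \emph{and} unramified principal series of $\GL_2(F_v)$, so the characteristic-zero lift produced by Lemma \ref{432} may perfectly well be crystalline (even supersingular) rather than semi-stable non-crystalline, and its residual representation may be irreducible. Indeed Definition \ref{defn:statement of weight conjecture for GL2 Qp} explicitly allows $\sigma_{m,p-1}\in W(\rhobar|_{G_{F_v}})$ when $\rhobar|_{G_{F_v}}$ is irreducible. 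Note also that Lemma \ref{lem:typesversusweights} is formulated only for tame types $\tau_{\chi_1,\chi_2}$ and $\tau_\chi$, and $\operatorname{sp}_\chi$ is not $\sigma(\tau)$ for any such $\tau$, so the lemma does not apply directly here. The paper handles $n=p-1$ by an entirely different route, citing Lemma 3.4 of \cite{geesavitttotally}: either $\rhobar|_{G_{F_v}}$ has a potentially Barsotti--Tate lift of type $\tilde\omega^m\oplus\tilde\omega^m$ (and one applies Raynaud's classification), or it has a potentially semi-stable but not potentially crystalline lift of that type, which is automatically ordinary. You would need that dichotomy, or an equivalent, to close this case.
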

\begin{proof}
  Suppose firstly that $n\ne p-1$. By parts (1) and (3) of Lemma \ref{441}, $\sigma_{m,n}$ is a
  Jordan-H\"older factor of
  $\sigma(\tau_{\tilde{\omega}^{m+n},\tilde{\omega}^m})\otimes_\bigO\F$
  (using the notation explained before Lemma
  \ref{lem:typesversusweights}). By Lemma \ref{lem:typesversusweights}
  (which uses local-global compatibility at $v$), $\rhobar$ has a lift
  to characteristic zero, which when restricted to $G_{F_v}$ is
  potentially Barsotti-Tate of
  type $\tilde{\omega}^{m+n}\oplus \tilde{\omega}^{m}$. By
  part (1) of Theorem \ref{442}, we have $$\rhobar|_{I_{F_v}}\cong
  \left(
	\begin{array}{cc}
		\omega^{m+n+1} &* \\
		0& \omega^{m} 
	\end{array}
	\right),\text{ }\left( 
	\begin{array}{cc}
		\omega^{m+1} &* \\
		0& \omega^{m+n} 
	\end{array}
	\right), \text{ or }\left( 
	\begin{array}{cc}
		\omega_2^{n+1} & 0 \\
		0& \omega_2^{p(n+1)} 
	\end{array}
      \right)\otimes\omega^{m},$$and furthermore if $n=0$ and
      $\rhobar|_{G_{F_v}}$ is reducible then
      $\rhobar|_{G_{F_v}}$ is peu ramifi\'{e}e. Comparing this to
      Definition \ref{defn:statement of weight conjecture for GL2 Qp}, we are done unless
      $\rhobar|_{I_{F_v}}\cong \bigl(
	\begin{smallmatrix}
		\omega^{m+1}&*\\0&\omega^{m+n}
	\end{smallmatrix}
	\bigr)$ and $n\neq 0$, $p-1$.

        Suppose that we are in this case. By part (4) of Lemma
        \ref{441}, we see that $\sigma$ is a Jordan-H\"older factor of
        $\sigma(\tau_{\tilde{\omega}_2^{(m-1)(p+1)+n+2}})\otimes_\bigO\F$. Again,
          by Lemma \ref{lem:typesversusweights}, $\rhobar|_{G_{F_v}}$
          has a potentially Barsotti-Tate lift of type
          $\tilde{\omega}_2^{(m-1)(p+1)+n+2}\oplus\tilde{\omega}_2^{(m-1)(p+1)+p(n+2)}$. Then
          by part (2) of Theorem \ref{442}, we see that if
          $\rhobar|_{I_{F_{v}}}$ is reducible, then
          $\rhobar|_{I_{F_v}}\cong \bigl(
	\begin{smallmatrix}
		\omega^{m+n+1}&*\\0&\omega^{m}
	\end{smallmatrix}
	\bigr)$ or $\bigl(
	\begin{smallmatrix}
		\omega^{m}&*\\0&\omega^{m+n+1}
	\end{smallmatrix}
	\bigr)$. This contradiction gives the required result.

Finally, suppose $n=p-1$. Then by Lemma 3.4 of \cite{geesavitttotally}, $\rhobar|_{G_{F_v}}$
either has a potentially Barsotti-Tate lift of type
$\tilde{\omega}^m\oplus\tilde{\omega}^m$, in which case the result is
an easy corollary of Theorem 3.4.3 of \cite{MR0419467}, or $\rhobar|_{G_{F_v}}$
has a potentially semi-stable lift of type
$\tilde{\omega}^m\oplus\tilde{\omega}^m$, which is not potentially
crystalline. Any such lift is automatically ordinary, and the result
again follows easily.
\end{proof}

We now take care of (most of) the converse. We will prove a slight
refinement of the weight conjecture, where we additionally consider
Hecke operators at places dividing $p$. To this end, we have the
following slight refinement of (one direction of) Lemma
\ref{lem:typesversusweights}.

\begin{defn}
	Let $X$ be a set of places lying over $p$, and for each $v\in X$ let $\lambdabar_{v}$ be an element of $\F$. We say that an irreducible representation $\overline{\rho}:G_F\to\GL_2(\overline{\F}_p)$ is modular of weight $\sigma$ with Hecke eigenvalues $\{\lambdabar_{v}\}_{v\in X}$ if for some $D$, $\sigma$, $X$, $U$, and $\psi$ as above there is a maximal ideal $\mathfrak{m}$ of $\mathbb{T}_{S^{p},\F}^{\textrm{univ}}$ such that $(T_{v}-\lambdabar_{v})\in\mathfrak{m}$ for all $v\in X$, and we have $S_{\sigma,\psi}(U,\F)_\mathfrak{m}\neq 0$ and $\overline{\rho}_\mathfrak{m}\cong\overline{\rho}$.
\end{defn}
\begin{lemma}\label{lem:typesversusweightsrefined}Let the set of
  places of $F$ dividing $p$ be partitioned as $S=S_{1}\coprod
  S_{2}$. For each $v\in S_{1}$ fix a tame type $\tau_{v}$
  (i.e. $\tau_{v}=\tau_{\chi_{1},\chi_{2}}$ or $\tau_{\chi}$ as
  above). For each $v\in S_{2}$ fix a tame type
  $\tau_{v}=\tilde{\omega}^{m_{v}}\oplus\tilde{\omega}^{m_{v}-1}$.
	
Suppose that $\rhobar$ lifts to a modular Galois representation
  $\rho$ which is potentially Barsotti-Tate of type $\tau_{v}$ for
  each $v\in S$, and which for each $v\in S_{2}$ satisfies
	
$$\rho|_{G_{F_{v}}}\cong \left(\begin{matrix}*&*\\0&\tilde{\omega}^{m_{v}}nr(\lambda_{v})\end{matrix}\right),$$ where $nr(\lambda_{v})$ is the unramified character taking an arithmetic Frobenius element to $\lambda_{v}$. Let $\sigma_{S_{2}}=\otimes_{v\in S_{2}}\sigma_{m_{v},p-2}$. Then for some irreducible $\prod_{v\in S_{1}}\GL_{2}(\bigO_{v})$-module subquotient $\sigma_{S_{1}}$ of $\otimes_{v\in S_{1}}\sigma(\tau_{v})\otimes_{\bigO} \F$, $\rhobar$ is modular of weight $\sigma_{S_{1}}\otimes\sigma_{S_{2}}$ and Hecke eigenvalues $\{\overline{\lambda}_{v}\}_{v\in S_{2}}$.
	
      \end{lemma}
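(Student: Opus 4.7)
The plan is to produce the desired mod $p$ eigenform by starting from a characteristic-zero form attached to $\rho$, extracting the $U_v$-eigenvalue at each $v\in S_2$ via local--global compatibility, and then descending the reduction modulo $p$ to the correct Jordan--H\"older factor via Lemma \ref{lem:compatibility of Hecke at p}.

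First I would imitate the proof of Lemma \ref{lem:typesversusweights} to obtain, from the modular lift $\rho$ and local--global compatibility at the places dividing $p$ (via \cite{kis06}), a compact open $U$ with $U_v=\GL_2(\bigO_{F_v})$ at $v\mid p$, a character $\psi$ matching $\det\rho$, and an eigenform $f\in S_{\sigma,\psi}(U,\bigO)_{\mathfrak{m}'}$, where $\sigma=\otimes_{v\mid p}\sigma(\tau_v)$ and $\mathfrak{m}'\subset\T^{\operatorname{univ}}_{S,E}$ is the maximal ideal cut out by $\rho$. For each $v\in S_2$, after twisting by $\tilde\omega^{-(m_v-1)}\circ\det$ to reduce to the setup of the paragraph preceding Lemma \ref{lem:compatibility of Hecke at p} (so that $\chi_{1,v}=1$, $\chi_{2,v}=\tilde\omega$, $r=1$), the Frobenius-reciprocity description of $S_{\sigma,\psi}(U,\bigO)$ as a space of $\Gamma_{1,v}$-invariant functions transforming under the Iwahori by $1\otimes\chi_{2,v}$, combined with the hypothesis that the quotient character of $\rho|_{G_{F_v}}$ is $\tilde\omega^{m_v}nr(\lambda_v)$, forces $f$ to be a $U_v$-eigenform with eigenvalue $\lambda_v$. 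Reducing modulo $p$ yields $\bar f\in S_{\sigma\otimes\F,\psi}(U,\F)_{\mathfrak{m}}$ that is a $U_v$-eigenform with eigenvalue $\lambdabar_v$ for each $v\in S_2$.

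Next, at each $v\in S_2$ the short exact sequence
\[0\to\sigma_{m_v-1,1}\to\sigma(\tau_v)\otimes\F\to\sigma_{m_v,p-2}\to 0\]
(the untwist of the exact sequence in the proof of Lemma \ref{lem:compatibility of Hecke at p}) and the exactness of $W\mapsto S_{W,\psi}(U,\F)$ give, after iterating over $v\in S_2$, a filtration on $S_{\sigma\otimes\F,\psi}(U,\F)$ whose top graded piece is a direct sum of spaces $S_{\sigma_{S_1}\otimes\sigma_{S_2},\psi}(U,\F)$ indexed by Jordan--H\"older factors $\sigma_{S_1}$ of $\otimes_{v\in S_1}\sigma(\tau_v)\otimes\F$. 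It remains to show that the image $\bar h$ of $\bar f$ in this top piece is nonzero and is a $T_v$-eigenform with eigenvalue $\lambdabar_v$ for every $v\in S_2$. The $T_v$-eigenvalue computation mirrors that in Lemma \ref{lem:compatibility of Hecke at p}: decomposing $T_v$ into its $p+1$ single cosets and pairing with an appropriate dual vector picking out the quotient, the contribution of $\bigl(\begin{smallmatrix}1&0\\0&p\end{smallmatrix}\bigr)$ lies in the sub $\sigma_{m_v-1,1}$ and so vanishes in the quotient, making $T_v$ on $S_{\sigma_{m_v,p-2}\otimes\cdots,\psi}(U,\F)$ compatible with $U_v$ on $S_{\sigma(\tau_v)\otimes\F\otimes\cdots,\psi}(U,\F)$ through the quotient map.

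The main obstacle is the non-vanishing of $\bar h$. Were it zero at some $v\in S_2$, then $\bar f$ would come from the sub $S_{\sigma_{m_v-1,1}\otimes(\otimes_{w\neq v}\sigma_w\otimes\F),\psi}(U,\F)$, and Lemma \ref{lem:compatibility of Hecke at p} would identify its $U_v$-eigenvalue $\lambdabar_v$ with the eigenvalue associated to the Iwahori-character $\chi_{1,v}$, i.e.\ the \emph{other} character of the type $\tau_v=\tilde\omega^{m_v}\oplus\tilde\omega^{m_v-1}$. But $f$ was constructed via $f\mapsto f_x$ to transform under the Iwahori by $1\otimes\chi_{2,v}$, pinning $\bar f$ to the eigenline associated with the bottom character $\chi_{2,v}=\tilde\omega^{m_v}nr(\lambda_v)$; under the short exact sequence this eigenline maps isomorphically onto the quotient JH factor $\sigma_{m_v,p-2}$ rather than into the sub. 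This forces the non-vanishing of $\bar h$ and completes the argument, the genuine technical content being the matching of the bottom-character Iwahori eigenline with the correct JH factor (essentially contained in the proof of Lemma \ref{lem:compatibility of Hecke at p}).
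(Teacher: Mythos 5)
Your overall plan (start from a characteristic-zero eigenform attached to $\rho$, reduce mod $p$, and track the $U_v$-eigenvalue through a filtration down to the quotient $\sigma_{m_v,p-2}$) differs from the paper's, and the place where it breaks is exactly the step you flag as "the main obstacle."

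The non-vanishing of $\bar h$ is not a local representation-theoretic fact, and your argument for it doesn't hold up. The point is that pairing with $\bar x_v$ does \emph{not} kill the sub $S_{\sigma_{m_v-1,1}\otimes\cdots,\psi}(U,\F)$: mod $p$, the image of $x_v$ in the quotient $\sigma_{p-1-r,r}$ of $\sigma(\tau_v)^*\otimes\F$ is $X^r$, and $\sigma_{p-1-r,r}\cong\sigma_{0,r}^*$, so $\langle\cdot,\bar x_v\rangle$ restricts nontrivially to the sub $\sigma_{0,r}=\sigma_{m_v-1,1}$ (this is precisely why the map $f\mapsto f_x$ stays injective, as the paper notes, and why Lemma \ref{lem:compatibility of Hecke at p} is a statement about the \emph{sub}, not the quotient). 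Consequently "$\bar f$ lies in the sub" is perfectly compatible with $\bar f_x$ transforming by $1\otimes\chi_{2,v}$ under the Iwahori, and Lemma \ref{lem:compatibility of Hecke at p} then simply says that $T_v$ on the sub agrees with $U_v$ — it does not "identify the eigenvalue with $\chi_{1,v}$," and there is no contradiction. A priori a form in weight $\sigma_{m_v-1,1}$ with $T_v$-eigenvalue $\lambdabar_v$ is a consistent possibility as far as the Iwahori/Hecke bookkeeping is concerned.

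What actually rules out the sub is global and weight-theoretic, and that is how the paper argues: by Lemma \ref{lem:typesversusweights} $\rhobar$ is modular of weight equal to some Jordan--H\"older factor of $\otimes_{v\mid p}\sigma(\tau_v)\otimes_\bigO\F$; at $v\in S_2$ these factors are $\sigma_{m_v-1,1}$ and $\sigma_{m_v,p-2}$ (Lemma \ref{441}(3)); the hypothesis on $\rho|_{G_{F_v}}$ forces $\rhobar|_{I_{F_v}}\cong\bigl(\begin{smallmatrix}\omega^{m_v}&*\\0&\omega^{m_v}\end{smallmatrix}\bigr)$, whence by Definition \ref{defn:statement of weight conjecture for GL2 Qp} one has $\sigma_{m_v-1,1}\notin W(\rhobar|_{G_{F_v}})$; and then Lemma \ref{6.1} (the "only if" direction of the weight conjecture, proved via Theorem \ref{442}) excludes $\sigma_{m_v-1,1}$. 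This is a genuine input — it secretly uses the potentially Barsotti--Tate classification of Theorem \ref{442} — and cannot be replaced by a local argument about Iwahori eigenlines in $\sigma(\tau_v)\otimes\F$. Once the weight is pinned down this way, the Hecke-eigenvalue statement does follow as you describe, from local--global compatibility together with Lemma \ref{lem:compatibility of Hecke at p}; that part of your proposal matches the paper. A smaller point: your "top graded piece is a direct sum of $S_{\sigma_{S_1}\otimes\sigma_{S_2},\psi}(U,\F)$" is also not quite right, since $\otimes_{v\in S_1}\sigma(\tau_v)\otimes\F$ is generally only filtered, not semisimple; but this is a presentational issue compared to the missing use of Lemma \ref{6.1}.
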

      \begin{proof}Firstly, note that by Lemma
        \ref{lem:typesversusweights}, $\rhobar$ is modular of weight
        $\sigma$ for some Jordan-H\"older factor $\sigma$ of
        $\otimes_{v|p}\sigma(\tau_{v})\otimes_{\bigO} \F$. Suppose
        $v\in S_2$. By the
        assumption on the form of $\rho|_{G_{F_v}}$, we see that \[\rhobar|_{I_{F_v}}\cong
        \begin{pmatrix}
          \omega^{m_v} &*\\0& \omega^{m_v}
        \end{pmatrix}.\] The two Jordan-H\"older factors of
        $\sigma(\tau_v)\otimes_\bigO\F$ are, by part (3) of Lemma
        \ref{441}, isomorphic to $\sigma_{m_v,p-2}$ and
        $\sigma_{m_v-1,1}$. Examining Definition \ref{defn:statement
          of weight conjecture for GL2 Qp}, we see that
        $\sigma_{m_v-1,1}\notin W(\rhobar|_{G_{F_v}})$, so by Lemma
        \ref{6.1}, we see that in fact $\rhobar$ is modular of weight
        $\sigma_{S_{1}}\otimes\sigma_{S_{2}}$ for some $\sigma_{S_1}$ as claimed.

It remains to check the claim about the Hecke eigenvalues at places in
$S_2$. This is a standard consequence of local-global compatibility
(and the calculation of the action of the Hecke algebra on a principal
series representation), together with the compatibility of the $T_v$
and $U_v$ operators proved in Lemma \ref{lem:compatibility of Hecke at p}.
\end{proof}
\begin{prop}
  \label{6.2}Suppose that $F$ is a totally real field in which the
  prime $p>2$ splits completely. Suppose that
  $\rhobar:G_F\to\GL_2(\Fpbar)$ is modular, and that
  $\rhobar|_{G_{F(\zeta_p)}}$ is irreducible. Suppose that
  $\sigma_{\vec{m},\vec{n}}=\prod_{v|p}\sigma_{v}\in W(\rhobar)$,
  where $\sigma_{v}=\sigma_{{m_{v}},n_{v}}$. Then $\rhobar$ is modular
  of weight $\sigma'_{\vec{m},\vec{n}}=\prod_{v|p}\sigma'_{v}$, where
  $\sigma'_{v}=\sigma_{v}$ unless $\sigma_{v}=\sigma_{m_{v},p-1}$ and
  $\sigma_{m_{v},0}\in W(\rhobar|_{G_{F_{v}}})$, in which case
  $\sigma_{v}'=\sigma_{m_{v},0}$.  Furthermore, if $X$ is a set of
  places $v|p$ such that $\rhobar|_{G_{F_{v}}}\cong\bigl(
		\begin{smallmatrix}
			nr(\alpha_{v})\omega^{m_{v}}&0\\0&nr(\beta_{v})\omega^{m_{v}}
		\end{smallmatrix}
		\bigr),$ then $\rhobar$ is modular of weight $\sigma'_{\vec{m},\vec{n}}$ and Hecke eigenvalues $\{\alpha_{v}\}_{{v\in X}}$. 
\end{prop}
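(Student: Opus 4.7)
\emph{Proof plan.}
The plan is to apply Corollary \ref{1.6} to produce a modular characteristic-zero lift of $\rhobar$ whose local type at each $v\mid p$ is a tame type $\tau_v$ chosen so that the Jordan--H\"older analysis of $\sigma(\tau_v)\otimes_{\bigO}\F$ (Lemma \ref{441}), combined with the already-established direction Lemma \ref{6.1}, pins down the weight at $v$ to be $\sigma'_v$. For the Hecke-eigenvalue refinement at places of $X$ I would substitute Lemma \ref{lem:typesversusweightsrefined} for Lemma \ref{lem:typesversusweights}, choosing a decomposable local lift at each $v\in X$ whose unramified quotient carries Frobenius eigenvalue $\alpha_v$.

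First, for each $v\mid p$, I would select $\tau_v$ according to the shape of $\rhobar|_{G_{F_v}}$ recorded in Definition \ref{defn:statement of weight conjecture for GL2 Qp}. When $\rhobar|_{G_{F_v}}$ is irreducible I take the cuspidal type $\tau_v=\tilde{\omega}_2^{k}\oplus\tilde{\omega}_2^{pk}$ with $k=1+n_v+(p+1)m_v$; the converse direction of Theorem \ref{442}(2) supplies a local lift (irreducibility forces scalar endomorphisms), and Lemma \ref{441}(4) guarantees $\sigma_{m_v,n_v}$ is a Jordan--H\"older factor of $\sigma(\tau_v)\otimes\F$. When $\rhobar|_{G_{F_v}}$ is reducible and $1\le n_v\le p-2$ I take the principal series type $\tau_v=\tilde{\omega}^{m_v+n_v}\oplus\tilde{\omega}^{m_v}$, with existence of a local lift furnished by Theorem \ref{442}(1) (taking a decomposable lift in the non-scalar endomorphism case). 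For $n_v=0$ I take $\tau_v=\tilde{\omega}^{m_v}\oplus\tilde{\omega}^{m_v}$ with $\sigma(\tau_v)=\tilde{\omega}^{m_v}\circ\det$, exploiting the peu-ramifi\'ee hypothesis built into Definition \ref{defn:statement of weight conjecture for GL2 Qp}; the reduction is exactly $\sigma_{m_v,0}$. The same trivial-twist type handles $n_v=p-1$, and this is the exceptional situation flagged in the statement.

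Applying Corollary \ref{1.6} (whose hypothesis (ord) is automatic under the assumption that $p$ splits completely in $F$, by Remark \ref{rem: ord doesn't matter if p splits}) produces a modular lift $\rho$ with $\rho|_{G_{F_v}}$ potentially Barsotti--Tate of type $\tau_v$ for every $v\mid p$. Lemma \ref{lem:typesversusweights} then yields Jordan--H\"older factors $\sigma''_v$ of $\sigma(\tau_v)\otimes\F$ with $\rhobar$ modular of weight $\bigotimes_v\sigma''_v$; by Lemma \ref{6.1} each $\sigma''_v\in W(\rhobar|_{G_{F_v}})$. A case check against Lemma \ref{441} and Definition \ref{defn:statement of weight conjecture for GL2 Qp} identifies $\sigma''_v$ with $\sigma'_v$ unambiguously in all cases except $n_v=p-1$ with $\sigma_{m_v,0}\in W(\rhobar|_{G_{F_v}})$: there, the trivial-type lift yields the weight $\sigma_{m_v,0}$ (the only Jordan--H\"older factor of $\sigma(\tau_v)\otimes\F$), which is precisely the substitution $\sigma_{m_v,p-1}\rightsquigarrow\sigma_{m_v,0}$ allowed in the proposition.

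For the refinement at $v\in X$, the hypothesis $\rhobar|_{G_{F_v}}\cong nr(\alpha_v)\omega^{m_v}\oplus nr(\beta_v)\omega^{m_v}$ forces $\rhobar|_{I_{F_v}}\cong\omega^{m_v}\oplus\omega^{m_v}$, and Definition \ref{defn:statement of weight conjecture for GL2 Qp} then gives $n_v=p-2$ and $\sigma'_v=\sigma_{m_v,p-2}$. At such $v$ I would use the principal series type $\tau_v=\tilde{\omega}^{m_v}\oplus\tilde{\omega}^{m_v-1}$ appearing in Lemma \ref{lem:typesversusweightsrefined}, choosing a decomposable local lift with unramified quotient $\tilde{\omega}^{m_v}\cdot nr(\alpha_v)$; this exists by Theorem \ref{442}(1) because $\rhobar|_{G_{F_v}}$ is itself decomposable with the required inertial shape. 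Running Corollary \ref{1.6} with these reinforced local conditions, and then invoking Lemma \ref{lem:typesversusweightsrefined} in place of Lemma \ref{lem:typesversusweights}, produces the prescribed $T_v$-eigenvalue $\alpha_v$ at every $v\in X$. The main obstacle in this argument is the Jordan--H\"older bookkeeping at the boundary value $n_v=p-1$, which is why the proposition must permit the substitution $\sigma_{m_v,p-1}\rightsquigarrow\sigma_{m_v,0}$; handling weight $\sigma_{m_v,p-1}$ in the tr\`es-ramifi\'ee case (where no such substitution is allowed) would require a Steinberg-type lift that lies outside the potentially Barsotti--Tate framework of Corollary \ref{1.6}, and presumably demands a separate argument.
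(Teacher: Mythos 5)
Your overall strategy matches the paper's proof: choose a tame type $\tau_v$ at each $v\mid p$ so that $\rhobar|_{G_{F_v}}$ has a potentially Barsotti--Tate lift of that type, apply Corollary \ref{1.6} (with Remark \ref{rem: ord doesn't matter if p splits} disposing of (ord)), read off a weight via Lemma \ref{lem:typesversusweightsrefined}, and then use Lemma \ref{441} together with Lemma \ref{6.1} to pin that weight down to $\sigma'_v$. Two concrete issues remain.

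First, your cuspidal exponent is off. You take $k=1+n_v+(p+1)m_v$, but by Definition \ref{defn:statement of weight conjecture for GL2 Qp} the irreducible $\rhobar|_{I_{F_v}}$ has diagonal character $\omega_2^{(n_v+1)+(p+1)m_v}$; Theorem \ref{442}(2) then requires $1+k$ or $p+k$ to equal this exponent, so $k=n_v+(p+1)m_v$ or $k=n_v+2+(p+1)(m_v-1)$. With your $k$ neither branch applies, so Theorem \ref{442}(2) does not supply a lift. Moreover, writing $k=i+(p+1)j$, Lemma \ref{441}(4) gives Jordan--H\"older factors $\sigma_{1+j,i-2}$ and $\sigma_{i+j,p-1-i}$, which for your $k$ are $\sigma_{m_v+1,n_v-1}$ and $\sigma_{m_v+n_v+1,p-2-n_v}$ — neither is $\sigma_{m_v,n_v}$. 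The paper's choice $k=n_v+2+(p+1)(m_v-1)$ (so $i=n_v+2$, $j=m_v-1$) makes both halves work: it hits the $\omega_2^{p+k}$ branch of Theorem \ref{442}(2), and yields $\sigma_{m_v,n_v}$ and $\sigma_{m_v+n_v+1,p-3-n_v}$ as the Jordan--H\"older factors, the second of which is excluded from $W(\rhobar|_{G_{F_v}})$ by Lemma \ref{6.1}.

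Second, your treatment of the $n_v=p-1$ tr\`es ramifi\'ee case is too pessimistic. You suggest it "presumably demands a separate argument" outside the potentially Barsotti--Tate framework, but the paper stays entirely inside it: one picks \emph{any} type $\tau_v$ for which $\rhobar|_{G_{F_v}}$ has a lift, runs Corollary \ref{1.6} and Lemma \ref{lem:typesversusweightsrefined} to obtain \emph{some} weight $\sigma'_v$ at $v$, and then invokes Lemma \ref{6.1} to conclude $\sigma'_v\in W(\rhobar|_{G_{F_v}})$, which in the tr\`es ramifi\'ee case is the singleton $\{\sigma_{m_v,p-1}\}$. No Jordan--H\"older bookkeeping is needed because the target set is a singleton. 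Your proposal thus leaves a genuine gap here, while the paper closes it with essentially no extra machinery. Aside from these two points — and being less explicit than the paper about choosing a component of the framed deformation ring at places of $X$ via example (E4) of \cite{gee051} — your approach and the paper's are the same.
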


\begin{proof}
  We make use of Corollary \ref{1.6}. Note that by Remark \ref{rem:
    ord doesn't matter if p splits} we do not have to concern
  ourselves with questions about the existence of ordinary lifts. We
  choose a tame type $\tau_{v}$ for
  each place $v|p$ such that $\rhobar|_{G_{F_{v}}}$ has a potentially
  Barsotti-Tate lift of type $\tau_{v}$. More precisely, suppose
  firstly that $\rhobar|_{G_{F_{v}}}$ is
  irreducible, and that  $n_v\ne p-1$. Then by Definition \ref{defn:statement of weight
    conjecture for GL2 Qp} we see that since $\sigma_{m_v,n_v}\in
  W(\rhobar|_{G_{F_v}})$ we have \[\rhobar|_{I_{F_v}}\cong
  \begin{pmatrix}
    \omega_2^{(n_v+1)+(p+1)m_v}&0\\0&\omega_2^{p(n_v+1)+(p+1)m_v}
  \end{pmatrix}.\]
We choose
  $\tau_{v}=\tilde{\omega}_2^{n_v+2+(p+1)(m_v-1)}\oplus\tilde{\omega}_2^{n_v+2+(p+1)(m_v-1)}$, and
  note that $\rhobar|_{G_{F_v}}$ has a potentially Barsotti-Tate lift
  of this type by part (2) of Theorem \ref{442}. Suppose secondly that
  $n_{v}=p-1$ and $\rhobar|_{I_{F_{v}}}$ is reducible and tr\'{e}s ramifi\'{e}e;
  then we choose an arbitrary type $\tau_{v}$ such that
  $\rhobar|_{G_{F_{v}}}$ has a lift of type $\tau_{v}$. Finally, in all other cases we let
  $\tau_v=\widetilde{\omega}^{m_{v}+n_{v}}\oplus\widetilde{\omega}^{m_{v}}$
  (note that since it is assumed that $\sigma_{m_v,n_v}\in
  W(\rhobar|_{G_{F_v}})$, it follows from Definition
  \ref{defn:statement of weight conjecture for GL2 Qp} and Theorem
  \ref{442} that  $\rhobar|_{G_{F_v}}$ has a potentially Barsotti-Tate lift
  of this type). If $v\in X$, we note that for some finite-order character $\psi_v$ as in example (E4) of \cite{gee051}, there
  is a component of a local framed deformation ring
  $R_v^{\square,\psi_v\epsilon,\tau_v}[1/p]$ such that the corresponding Galois
  representations are all of the form $\bigl(
		\begin{smallmatrix}
			*&*\\0&nr(\tilde{\alpha}_{v})\widetilde{\omega}^{m}
		\end{smallmatrix}
		\bigr)$ where $\tilde{\alpha}_{v}$ is a lift of
                $\alpha_{v}$. 

For each $v|p$ choose a potentially Barsotti-Tate lift $\rho_v$ of $\rhobar|_{G_{F_v}}$ of type
$\tau_v$ (lying on the specified component if $v\in X$). Now let
$\psi_v=\det\rho_v\epsilon^{-1}$, and choose a finite order character
$\psi:G_F\to\Qpbar^\times$ with $\psi|_{G_{F_v}}=\psi_v$ for all $v|p$
(this is possible by (the proof of) Lemma 4.1.1 of \cite{cht}). Let
$\Sigma$ be the set of places at which $\psi$ is ramified, together
with the set of places of $F$ dividing $p$. From Lemma \ref{432} one
sees that $\rhobar$ has a lift of determinant $\psi\epsilon$, and in
particular for each $v\in \Sigma$, $v\nmid p$, $\rhobar|_{G_{F_v}}$
has a lift $\rho_v$ of determinant $\psi|_{G_{F_v}}\epsilon$. For each
$v\in \Sigma$, choose a component of 
$R_v^{\square,\psi_v\epsilon,\tau_v}[1/p]$ containing
$\rho_v$. Applying Corollary \ref{1.6} to $\rhobar$ and the above
choices of $\psi$,
$\tau_v$, we conclude that $\rhobar$ has a modular lift
$\rho:G_F\to\GL_2(\Qpbar)$ which is potentially Barsotti-Tate of type
$\tau_v$ for all $v|p$, and which for all $v\in X$
satisfies $$\rho|_{G_{F_{v}}}\cong
\left(\begin{matrix}*&*\\0&\tilde{\omega}^{m_{v}}nr(\tilde{\alpha}_{v})\end{matrix}\right),$$ where $\tilde{\alpha}_{v}$ is a lift of
                $\alpha_{v}$.

Applying Lemma \ref{lem:typesversusweightsrefined}, we see that
$\rhobar$ is modular of some weight $\sigma'_{\vec{m},\vec{n}}=\prod_{v|p}\sigma'_{v}$ and Hecke
eigenvalues $\{\alpha_{v}\}_{{v\in X}}$, where $\sigma'_v=\sigma_v$ if
$v\in X$, and otherwise $\sigma_v$ is a Jordan-H\"older factor of
$\sigma(\tau_v)\otimes_\bigO\F$. We now examine the choices of
$\tau_v$ made above. If $\rhobar|_{G_{F_v}}$ is irreducible and
$n_v\ne p-1$ then by part (4) of Lemma \ref{441} we see that the Jordan-H\"older factors of
$\sigma(\tau_v)\otimes_\bigO \F$ are $\sigma_{m_v,n_v}$ and
$\sigma_{m_v+n_v+1,p-3-n_v}$. However, this second weight is not
contained in $W(\rhobar|_{G_{F_v}})$ by definition, so by Lemma
\ref{6.1} we have $\sigma_v'=\sigma_v$. 

If  $n_{v}=p-1$ and $\rhobar|_{I_{F_{v}}}$ is reducible and tr\'{e}s
ramifi\'{e}e, then by definition $W(\rhobar|_{G_{F_v}})=\sigma_v$, so
again by Lemma
\ref{6.1} we have $\sigma_v'=\sigma_v$. In the other cases where
$n_{v}=p-1$, and the cases where $n_v=0$, we have $\sigma(\tau_v)\otimes_\bigO\F=\sigma_{m_v,0}$
by part (1) of Lemma \ref{441}.

Finally, in the remaining cases, we have that $\rhobar|_{G_{F_v}}$ is
reducible, so since $\sigma_{m_v,n_v}\in W(\rhobar|_{G_{F_v}})$ by
assumption, we
have \[\rhobar|_{I_{F_v}}\cong
\begin{pmatrix}
  \omega^{m_v+n_v+1}&*\\0&\omega^{m_v}
\end{pmatrix}.\] In addition, by part (3) of Lemma \ref{441} the
Jordan-H\"older factors of $\sigma(\tau_v)\otimes_\bigO\F$ are
$\sigma_{m_v,n_v}$ and $\sigma_{m_v+n_v,p-1-n_v}$.  However, this second weight is not
contained in $W(\rhobar|_{G_{F_v}})$ (by definition), so by Lemma
\ref{6.1} we have $\sigma_v'=\sigma_v$. \end{proof}

 We now use an argument suggested to us by Mark Kisin. Note that
 although we are assuming throughout this section that $p$ splits
 completely in $F$, Lemma \ref{weight 2 to weight p-1} below only requires that $F_{v}=\Qp$.

Let $G=\GL_2(\Q_p)$, $K=\GL_2(\Z_p)$, and let $Z$ be the centre of $G$. If $\sigma$ is any representation of $KZ$ on a finite dimensional $\F$-vector space $V_\sigma$, then we let $\cind\sigma$ denote the compact induction of $\sigma$, and $\ind\sigma$ the induction of $\sigma$ with no restriction on supports (note that this notation is not standard). It is easy to check that if $\sigma^{*}$ denotes the dual of $\sigma$ then $\ind\sigma^{*}$ is dual to $\cind\sigma$.

We recall some of the results on Hecke algebras proved in \cite{bl}. A $KZ$-bi-invariant function $\phi:G\to\End_\F V_\sigma$ is one which satisfies $\phi(h_1gh_2)=\sigma(h_1)\phi(g)\sigma(h_2)$ for all $g\in G$, $h_1$, $h_2\in KZ$. Any such function acts on $\cind\sigma$ as follows: if $f\in\cind\sigma$, then $$\left(\phi(f)\right)(g)=\sum_{KZy\in KZ\backslash G}\phi(gy^{-1})f(y)=\sum_{yKZ\in G/KZ}\phi(y)f(y^{-1}g)$$(see Proposition 5 of \cite{bl}). Now give $\F^2$ an action of $KZ$ with $K$ acting via the natural map $\GL_2(\Z_p)\to\GL_2(\F_p)$, and $p\in Z$ acting trivially. Let $r\in[0,p-1]$, and set $\sigma=\Sym^r\F^2$. Let $T$ be the endomorphism of $\cind\sigma$ corresponding to the $KZ$-bi-invariant function which is supported on $KZ\left(
\begin{smallmatrix}
	1&0\\0&p^{-1}
\end{smallmatrix}
\right)KZ$ and which takes $\left(
\begin{smallmatrix}
	1&0\\0&p^{-1}
\end{smallmatrix}
\right)$ to $\Sym^r\left(
\begin{smallmatrix}
	0&0\\0&1
\end{smallmatrix}
\right)$. By Proposition 8 of \cite{bl}, $\F[T]$ is the full endomorphism algebra of $\cind\sigma$. One obtains a dual action on $\ind\sigma^{*}$.

For any $\lambda\in\F$, set
$\pi(r,\lambda)=\cind(\sigma)/(T-\lambda)\cind(\sigma)$. From Theorem
30 of \cite{bl} and Theorem 1.1 of \cite{bre031} we see that
$\pi(0,\lambda)$ is irreducible unless $\lambda=\pm 1$, when
$\pi(0,\pm1)$ is a non-trivial extension of $\mu_{\pm 1}\circ\det$ by
$\mu_{\pm 1}\circ\det\otimes\operatorname{Sp}$, where $\mu_{\pm
  1}:\Q_p^\times\to\F^\times$ is the irreducible character sending
$p\mapsto\pm 1$, and $\operatorname{Sp}$ is a certain irreducible
representation. Similarly, $\pi(p-1,\lambda)$ is irreducible unless $\lambda=\pm 1$, when
$\pi(p-1,\pm1)$ is a non-trivial extension of $\mu_{\pm
  1}\circ\det\otimes\operatorname{Sp}$ by $\mu_{\pm 1}\circ\det$.
\begin{lemma}
	\label{6.3} There is a morphism of
        $\F[T][\GL_2(\Q_p)]$-modules $$\theta:\ind\mathbf{1}\to\ind\Sym^{p-1}\F^2$$
        such that for all $\lambda\in \F$, the kernel of the induced
        map $$\theta:(\ind\mathbf{1})[T-\lambda]\to(\ind\Sym^{p-1}\F^2)[T-\lambda]$$  contains only functions which factor through the determinant.
\end{lemma}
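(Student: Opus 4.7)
The plan is to construct $\theta$ via Frobenius reciprocity and then analyze its behavior on each $T$-eigenspace using the Barthel--Livn\'e and Breuil structure results for $\pi(r,\lambda)$ recalled just above the lemma. Set $V = \Sym^{p-1}\F^2$ and $\alpha = \bigl(\begin{smallmatrix} p & 0 \\ 0 & 1\end{smallmatrix}\bigr)$. The key structural fact is that $p+1 \equiv 1 \pmod{p}$, so the augmentation map $\F[\mathbb{P}^1(\F_p)] \to \F$ does not annihilate the trivial submodule spanned by $\sum_x \delta_x$; this gives a $\GL_2(\F_p)$-equivariant decomposition $\F[\mathbb{P}^1(\F_p)] = \mathbf{1} \oplus V$, where $V$ sits as the kernel of the augmentation via polynomial evaluation on affine representatives (well-defined since $\lambda^{p-1} = 1$ for $\lambda \in \F_p^{\times}$).

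Identifying $\mathbb{P}^1(\F_p)$ with the set of right $KZ$-cosets inside $KZ\alpha KZ$ via the natural $K$-action, and fixing coset representatives $y_x$, one defines a $KZ$-equivariant map $\ell: \ind\mathbf{1} \to V$ by $\ell(f) := \pi_V\bigl(\sum_x f(y_x)\delta_x\bigr)$, where $\pi_V: \F[\mathbb{P}^1(\F_p)] \twoheadrightarrow V$ is the projection. By Frobenius reciprocity for smooth induction this yields a $G$-equivariant map $\theta: \ind\mathbf{1} \to \ind V$ given by $\theta(f)(g) = \ell(g\cdot f)$. The compatibility $\theta \circ T = T \circ \theta$ follows from the fact that $T$ on both sides is defined via the same double coset $KZ\alpha KZ$ on which $\ell$ is ``supported,'' and can be verified by a direct computation.

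To analyze the kernel, use smooth duality to identify $(\ind\mathbf{1})[T-\lambda]$ and $(\ind V)[T-\lambda]$ with the contragredients of $\pi(0,\lambda)$ and $\pi(p-1,\lambda)$ respectively. For $\lambda \ne \pm 1$ both are irreducible, so a nonzero $\theta \bmod (T-\lambda)$ has trivial kernel. For $\lambda = \pm 1$, the non-split extensions $\pi(0,\pm 1)$ and $\pi(p-1,\pm 1)$ share the Jordan--H\"older factors $\mu_{\pm 1}\circ\det$ and $\mu_{\pm 1}\circ\det\otimes\Sp$ but arranged as submodule and quotient in opposite orders; dualizing, $(\ind\mathbf{1})[T-\lambda]$ has $\mu_{\pm 1}\circ\det$ as its unique one-dimensional submodule, while in $(\ind V)[T-\lambda]$ this same character appears only as a quotient. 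Since $\Hom_G(\mu_{\pm 1}\circ\det, \mu_{\pm 1}\circ\det\otimes\Sp) = 0$ and $\Hom_G(\Sp, \mathbf{1})=0$, a nonzero $\theta \bmod (T-\lambda)$ cannot be injective and must annihilate this submodule, which consists exactly of the characters $\mu_{\pm 1}\circ\det \in \ind\mathbf{1}$ factoring through the determinant.

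The main obstacle is verifying that $\theta \bmod (T-\lambda) \ne 0$ for every $\lambda \in \F$; otherwise the kernel could be all of $(\ind\mathbf{1})[T-\lambda]$ and the conclusion would fail. I would address this by working on the dual side: one shows that the $KZ$-equivariant composite $V \hookrightarrow \F[\mathbb{P}^1(\F_p)] \subset \cind \mathbf{1}$ underlying the Frobenius-reciprocal map $\Theta:\cind V \to \cind\mathbf{1}$ has image not contained in $(T-\lambda)\cind\mathbf{1}$ for any $\lambda$. This reduces to an explicit computation with the Hecke operator formulas from \cite{bl}, computing the image of a generator (for instance, an Iwahori-fixed vector in $V$) inside each $\pi(0,\lambda)$.
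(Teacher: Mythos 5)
Your strategy closely mirrors the paper's: in both cases the heart of the matter is the Barthel--Livn\'e/Breuil description of $\pi(0,\lambda)$ and $\pi(p-1,\lambda)$ and their duals, and your analysis for $\lambda=\pm 1$ (using that the two non-split extensions have $\mu_{\pm 1}\circ\det$ as unique one-dimensional submodule in $(\ind\mathbf{1})[T-\lambda]$ but only as a quotient in $(\ind\Sym^{p-1}\F^2)[T-\lambda]$) is exactly the argument the paper makes after taking duals. Your decomposition $\F[\mathbb{P}^1(\F_p)]\cong\mathbf{1}\oplus\Sym^{p-1}\F^2$ is also correct for $\GL_2(\F_p)$ (precisely because $p+1\equiv 1$ is nonzero in $\F$), and Frobenius reciprocity does produce a candidate $\theta$.

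The difference is that where you build $\theta$ by hand, the paper simply invokes Lemma 1.5.5 of Kisin (the reference \cite{kis062}), which hands over a morphism $\cind\Sym^{p-1}\F^2\to\cind\mathbf{1}$ together with the crucial fact that it is \emph{nonzero modulo $T-\lambda$ for every $\lambda\in\F$} (and an isomorphism for $\lambda\neq\pm 1$); the paper then only needs to supply the cokernel analysis at $\lambda=\pm 1$ and dualize. That nonvanishing is precisely what you correctly flag as the ``main obstacle'' and then leave as a proposed computation rather than a completed one. Until that is actually carried out --- or until you cite the existing construction --- the argument has a genuine hole: if $\theta\bmod(T-\lambda)$ were zero for some $\lambda$, its kernel would be all of $(\ind\mathbf{1})[T-\lambda]$, which is far from consisting only of determinant-factoring functions. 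There is also a minor unfinished point that the $T$-equivariance of your $\theta$ is asserted as ``a direct computation'' without being done, and one should check that the $KZ$-map $\ell$ is independent of the coset representatives $y_x$; these are real but routine verifications. So: correct outline, same core mechanism as the paper, but with the one essential input (nonvanishing of the map modulo each $T-\lambda$) identified yet not established --- that input is exactly what the appeal to Kisin's lemma is doing in the published proof.
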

\begin{proof}
	In Lemma 1.5.5 of \cite{kis062} there is a construction of a morphism of $\F[T][\GL_2(\Q_p)]$-modules $$\cind\Sym^{p-1}\F^2\to\cind\mathbf{1}$$which is nonzero modulo $T-\lambda$ for all $\lambda\in \F$, and which induces an isomorphism modulo $T-\lambda$ for all $\lambda\neq\pm 1$. From the above discussion, we see that if $\lambda=\pm 1$, then we have an induced morphism $\pi(p-1,\pm 1)\to\pi(0,\pm 1)$ whose cokernel is $\mu_{\pm 1}\circ\det$. Taking duals gives the required morphism.
\end{proof}
We can now construct a form of weight $\sigma_{m,p-1}$ from one of weight $\sigma_{m,0}$. 
\begin{lemma}\label{weight 2 to weight p-1}If $\rhobar$ is modular of weight $\sigma=\otimes_{w|p}\sigma_w$ and Hecke eigenvalues $\{\alpha_{w}\}_{w\in S}$ for some $S$ not containing $v$, and $\sigma_{v}=\sigma_{m_{v},0}$, then $\rhobar$ is also modular of weight $\sigma'=\sigma_{m_{v},p-1}\otimes_{w\neq v}\sigma_w$ and Hecke eigenvalues $\{\alpha_{w}\}_{w\in S}$.
	\end{lemma}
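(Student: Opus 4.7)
The plan is to realise mod $p$ quaternionic forms as $\GL_{2}(\Qp)$-equivariant maps out of $\cind\mathbf{1}$, respectively $\cind\Sym^{p-1}\F^{2}$, into a common ambient smooth representation at $v$, and to produce the desired form of weight $\sigma'$ by precomposing with a map dual to the $\theta$ from Lemma \ref{6.3}.

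After twisting by $\det^{-m_{v}}$ at $v$ we may reduce to $m_{v}=0$, so $\sigma_{v}=\mathbf{1}$ and $\sigma'_{v}=\Sym^{p-1}\F^{2}$. Let $D,U,\psi,\mathfrak{m}$ witness the modularity of $\rhobar$ of weight $\sigma$ with the prescribed Hecke eigenvalues, and let $\tilde S$ denote the space of functions $D^{\times}\backslash(D\otimes_{F}\A_{F}^{f})^{\times}\to W_{\sigma^{v}}$ (where $\sigma^{v}=\otimes_{w\neq v}\sigma_{w}$) which are $\sigma^{v}$-equivariant under $U^{v}=\prod_{w\neq v}U_{w}$ and transform by $\psi$ under the centre. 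Then $\tilde S$ is a smooth admissible representation of $G:=\GL_{2}(F_{v})=\GL_{2}(\Qp)$ via right translation at $v$, carrying a commuting action of the Hecke algebra $\mathbb{T}^{v}$ away from $v$. Frobenius reciprocity gives $\mathbb{T}^{v}$-equivariant identifications $S_{\sigma,\psi}(U,\F)=\tilde S^{KZ}=\Hom_{G}(\cind\mathbf{1},\tilde S)$ and $S_{\sigma',\psi}(U,\F)=\Hom_{G}(\cind\Sym^{p-1}\F^{2},\tilde S)$.

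Since $\tilde S^{KZ}$ is finite-dimensional and the operator $T_{v}=T$ of \cite{bl} commutes with the rest of $\mathbb{T}^{v}$, we may take $f\in S_{\sigma,\psi}(U,\F)_{\mathfrak{m}}$ to be a simultaneous Hecke eigenvector with $T_{v}f=\lambda f$ for some $\lambda\in\F$; the corresponding map $\tilde f:\cind\mathbf{1}\to\tilde S$ then factors through $\pi(0,\lambda)=\cind\mathbf{1}/(T-\lambda)$. Dualising $\theta$, using $(\cind\rho)^{\vee}=\ind\rho^{*}$ together with the self-duality of $\mathbf{1}$ and $\Sym^{p-1}\F^{2}$ as $KZ$-modules, yields a $\F[T][G]$-equivariant map $\theta^{\vee}:\cind\Sym^{p-1}\F^{2}\to\cind\mathbf{1}$ which passes to $\theta^{\vee}_{\lambda}:\pi(p-1,\lambda)\to\pi(0,\lambda)$. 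Dualising the conclusion of Lemma \ref{6.3}, the image of $\theta^{\vee}_{\lambda}$ is all of $\pi(0,\lambda)$ when $\lambda\neq\pm 1$, and equals the unique proper subrepresentation of $\pi(0,\pm 1)$ (isomorphic to $\mu_{\pm 1}\circ\det\otimes\operatorname{Sp}$) when $\lambda=\pm 1$. The composite $f':=\bar{\tilde f}\circ\theta^{\vee}_{\lambda}\in\Hom_{G}(\cind\Sym^{p-1}\F^{2},\tilde S)=S_{\sigma',\psi}(U,\F)$ is then a Hecke eigenvector with the same $\mathbb{T}^{v}$-eigenvalues as $f$, because $\theta^{\vee}$ is $\mathbb{T}^{v}$-equivariant.

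The main obstacle is to show $f'\neq 0$; this is automatic for $\lambda\neq\pm 1$. For $\lambda=\pm 1$, $f'=0$ would force $\tilde f$ to factor through the one-dimensional character quotient $\mu_{\pm 1}\circ\det$ of $\pi(0,\pm 1)$, so the $G$-submodule of $\tilde S$ generated by $f$ would be a character of $\GL_{2}(\Qp)$. But then any automorphic representation $\pi$ of $D^\times$ contributing to the span of $f$ would have $\pi_{v}$ one-dimensional, forcing $\pi$ itself to be one-dimensional via Jacquet--Langlands, so that its associated Galois representation is a direct sum of characters and in particular $\rhobar$ is reducible --- contradicting the standing assumption that $\rhobar|_{G_{F(\zeta_{p})}}$ is absolutely irreducible. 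Hence no such $\pi$ contributes to $\mathfrak{m}$, so we may choose $f$ inside $S_{\sigma,\psi}(U,\F)_{\mathfrak{m}}$ whose $G$-envelope is not a character; then $f'\neq 0$ in $S_{\sigma',\psi}(U,\F)_{\mathfrak{m}}$ with the required Hecke eigenvalues, finishing the proof.
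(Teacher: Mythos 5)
Your overall strategy — identifying $S_{\sigma,\psi}(U,\F)$ with $\Hom_G(\cind\mathbf{1},\tilde{S})$ via Frobenius reciprocity, pushing through (the dual of) the map $\theta$ of Lemma \ref{6.3}, and then arguing non-vanishing by showing that the alternative would make $\rhobar_{\mathfrak{m}}$ reducible — is the same as the paper's, just phrased in terms of $\theta^\vee$ and individual eigenvectors instead of $\theta$ and injectivity of the localized map. Up to that point the argument is sound.

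The concluding step is where you have a real gap. Having reduced to the case $\lambda=\pm 1$ and observed that $f'=0$ would force $\bar{\tilde{f}}$ to factor through $\mu_{\pm 1}\circ\det$, you invoke the Jacquet--Langlands correspondence: ``any automorphic representation $\pi$ of $D^\times$ contributing to the span of $f$ would have $\pi_v$ one-dimensional, forcing $\pi$ to be one-dimensional.'' But we are in characteristic $p$: the space $S_{\sigma,\psi}(U,\F)$ (or $\tilde{S}$) does not decompose into automorphic representations of $D^\times$, there is no mod $p$ Jacquet--Langlands correspondence available here, and lifting $f$ to a characteristic-$0$ eigenform with the same local character at $v$ is not something you can do freely. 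So this step is not actually justified. What you have honestly established is that $\tilde{f}$, viewed as a function on $D^\times\backslash(D\otimes_F\A_F^f)^\times$, transforms under right translation by $\GL_2(F_v)$ through a character of the determinant, hence is right $\SL_2(F_v)$-invariant. The correct tool to finish from there is strong approximation: combined with left $D^\times$-invariance, it forces $\tilde{f}$ to be invariant under all elements of reduced norm $1$ in $(D\otimes_F\A_F^f)^\times$, so $\tilde{f}$ factors through the reduced norm. This makes the Hecke eigensystem abelian and hence $\rhobar_{\mathfrak{m}}$ reducible, contradicting absolute irreducibility. Replace the Jacquet--Langlands appeal with this strong-approximation argument and the proof closes; without it, the non-vanishing of $f'$ for $\lambda=\pm 1$ is not established.
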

	\begin{proof}
By definition there is a maximal ideal $\mathfrak{m}$ of $\mathbb{T}_{S^{p},\F}^{\textrm{univ}}$ with $(T_{w}-\alpha_{w})\in \mathfrak{m}$ for all $w\in S$ and some $U$ such that $S_{\sigma,\psi}(U,\F)_\mathfrak{m}\neq 0$ and $\rhobar_\mathfrak{m}\otimes\Fpbar\cong\rhobar$.

					Suppose $f\in S_{\sigma,\psi}(U,\F)$. Then $$f:D^\times\backslash(D\otimes_F\A_F^f)^\times\to \sigma$$ is such that for all $g\in(D\otimes_F\A^f_F)^\times$ we have 
					\begin{align*}
						f(gu)=\sigma(u)^{-1}f(g)\text{ for all }u\in U\\
						f(gz)=\psi(z)f(g)\text{ for all }z\in (\A_F^f)^\times.
					\end{align*}
					Consider $f$ as a
                                        $D^{\times}$-invariant
                                        function
                                        $\tilde{f}:(D\otimes_F\A_F^f)^\times\to\sigma$. Let
                                        $\widetilde{U}=GL_2(F_{v})\times\prod_{x\neq
                                          v}U_{x}$. Then by Frobenius
                                        reciprocity there is a
                                        canonical
                                        isomorphism \[\Map_{U}((D\otimes_{F}\A_{F}^{f})^{\times},\sigma)\isoto\Map_{\widetilde{U}}((D\otimes_{F}\A_{F}^{f})^{\times},\otimes_{w\neq
                                          v}\sigma_w\otimes\ind\sigma),\]so
                                        we see that the map $\theta$
                                        of Lemma \ref{6.3} induces a
                                        map \[\Map_{U}((D\otimes_{F}\A_{F}^{f})^{\times},\sigma)\to\Map_{U}((D\otimes_{F}\A_{F}^{f})^{\times},\sigma').\]
                                        Furthermore, since Frobenius
                                        reciprocity is functorial with
                                        respect to the first argument,
                                        we easily see that this map is
                                        compatible with the action of
                                        the Hecke operators at all
                                        places other than $v$ for
                                        which they are defined and the
                                        action of
                                        $(\A_{F}^{f})^{\times}$, and
                                        that it takes $\tilde{f}$ to a
                                        $D^{\times}$-invariant
                                        function. Thus we have an
                                        induced
                                        map of $\F[T]$-modules \[S_{\sigma,\psi}(U,\F)_\mathfrak{m}\to
                                        S_{\sigma',\psi}(U,\F)_\mathfrak{m}.\]
                                        It remains to checked that this map is
                                        injective. Since
                                        $S_{\sigma,\psi}(U,\F)_\mathfrak{m}$
                                        is a finite $\F[T]$-module, it
                                        suffices to check that for
                                        each $\lambda\in \F$ the map  \[S_{\sigma,\psi}(U,\F)_\mathfrak{m}[T-\lambda]\to
                                        S_{\sigma',\psi}(U,\F)_\mathfrak{m}[T-\lambda]\]
                                        is injective. If $f$ is in
                                        the kernel of the map, then
                                         one sees that
                                        $\tilde{f}$ is left
                                        $\SL_{2}(F_{v})$-invariant by
                                        Lemma \ref{6.3}. Because
                                        it is also left
                                        $D^{\times}$-invariant, strong
                                        approximation shows that is in
                                        fact invariant under the
                                        subgroup of
                                        $(D\otimes_{F}\A_{F}^{f})^{\times}$
                                        of elements of reduced norm
                                        1. Thus $f$ factors through
                                        the quotient of
                                        $(D\otimes_{F}\A_{F}^{f})^{\times}$
                                        by the elements of reduced
                                        norm 1. If $f\ne 0$ this implies that
                                        $\rhobar_{\mathfrak{m}}$ is
                                        reducible, a contradiction.
	\end{proof}

Putting all this together, we have proved:
\begin{thm}
	 Suppose that $F$ is a totally real field in which the prime $p>2$ splits completely. Suppose that $\rhobar:G_F\to\GL_2(\Fpbar)$ is modular, and that $\rhobar|_{G_{F(\zeta_p)}}$ is irreducible. Then $\rhobar$ is modular of weight $\sigma$ if and only if $\sigma\in W(\rhobar)$. Furthermore, if $\sigma\in W(\rhobar)$ and $X$ is a set of places such that for any $v\in X$ we have $\rhobar|_{G_{F_{v}}}\cong\bigl(
		\begin{smallmatrix}
			nr(\alpha_{v})\omega^{m_{v}}&0\\0&nr(\beta_{v})\omega^{m_{v}}
		\end{smallmatrix}
		\bigr),$ then $\rhobar$ is modular of weight $\sigma$ and Hecke eigenvalues $\{\alpha_{v}\}_{{v\in X}}$.
\end{thm}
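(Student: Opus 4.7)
The plan is to assemble the theorem directly from three results already established in this subsection: Lemma \ref{6.1} supplies the ``only if'' direction, Proposition \ref{6.2} gives the bulk of the ``if'' direction together with the Hecke eigenvalue refinement, and Lemma \ref{weight 2 to weight p-1} repairs the one discrepancy between the weight produced by Proposition \ref{6.2} and the desired weight.

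For the ``only if'' direction, suppose $\rhobar$ is modular of weight $\sigma=\otimes_{v|p}\sigma_v$. Then $\rhobar|_{G_{F_v}}$ is modular of weight $\sigma_v$ for every $v|p$ in the sense of the preceding subsection, and Lemma \ref{6.1} yields $\sigma_v\in W(\rhobar|_{G_{F_v}})$ for all such $v$. By the definition of $W(\rhobar)$, this says $\sigma\in W(\rhobar)$.

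For the ``if'' direction together with the Hecke eigenvalue statement, fix $\sigma=\otimes_{v|p}\sigma_{m_v,n_v}\in W(\rhobar)$ and $X$ as in the statement. Proposition \ref{6.2} produces modularity of a weight $\sigma'=\otimes_{v|p}\sigma'_v$ with $\sigma'_v=\sigma_v$ except at those $v$ where $\sigma_v=\sigma_{m_v,p-1}$ and $\sigma_{m_v,0}\in W(\rhobar|_{G_{F_v}})$, in which case $\sigma'_v=\sigma_{m_v,0}$; moreover, at every $v\in X$ the operator $T_v$ acts by $\alpha_v$. The key point is that $X$ is disjoint from the exceptional set: at any $v\in X$ the restriction $\rhobar|_{I_{F_v}}$ is the scalar $\omega^{m_v}$, so Definition \ref{defn:statement of weight conjecture for GL2 Qp} forces any weight $\sigma_{m,n}\in W(\rhobar|_{G_{F_v}})$ to satisfy $\omega^{n+1}=1$, hence $n=p-2$ (using $p>2$); in particular $n_v\ne p-1$. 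Thus $\sigma'_v=\sigma_v$ for each $v\in X$, and the prescribed Hecke eigenvalues are already recorded by Proposition \ref{6.2}.

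It remains to convert $\sigma'$ into $\sigma$ at the finitely many remaining places where they disagree. At each such place $v$ we have $v\notin X$, so Lemma \ref{weight 2 to weight p-1} applies with $S=X$: one application replaces $\sigma'_v=\sigma_{m_v,0}$ by $\sigma_{m_v,p-1}=\sigma_v$ while preserving the Hecke eigenvalues at $X$. Iterating over the finitely many exceptional places produces modularity of weight $\sigma$ with Hecke eigenvalues $\{\alpha_v\}_{v\in X}$, completing the proof. The argument contains no substantial obstacle; the only content is the disjointness check between $X$ and the exceptional set, and everything else is harvesting results that have already been put in place.
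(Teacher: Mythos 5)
Your proof is correct and follows the same route as the paper's (which simply cites Proposition \ref{6.2} and Lemma \ref{weight 2 to weight p-1} and leaves the bookkeeping implicit, with the ``only if'' direction being exactly Lemma \ref{6.1}). The one point you spell out that the paper does not — that for $v\in X$ the scalar inertia action $\omega^{m_v}\oplus\omega^{m_v}$ forces $n_v\equiv -1\bmod(p-1)$, hence $n_v=p-2\ne p-1$, so $X$ is disjoint from the set of places where $\sigma'$ and $\sigma$ disagree — is precisely the check needed to legitimize applying Lemma \ref{weight 2 to weight p-1} with $S=X$ while retaining the Hecke eigenvalue statement, and it is a worthwhile clarification.
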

\begin{proof}
  This follows immediately from Proposition \ref{6.2} and Lemma
  \ref{weight 2 to weight p-1} (applied successively to the places $v$
  for which $n_v=p-1$ and $\sigma_{m_v,0}\in
  W(\rhobar|_{G_{F_v}})$).
\end{proof}

\section{Automorphic representations on unitary groups}\label{unitary}
\subsection{}We now extend some of our results on the existence of
automorphic liftings of prescribed types to the case of
$n$-dimensional representations. Unfortunately, the results we obtain
are rather weaker than those for Hilbert modular forms, for several
reasons. Firstly, one can no longer expect to work directly with
$\GL_n$, as the Taylor-Wiles method breaks down for $\GL_n$ if $n>2$
(see the introduction to \cite{cht}). One needs instead to work with
representations satisfying some self-duality conditions; we choose, as
in \cite{cht} and \cite{tay06}, to work with representations into the
disconnected group $\gn$, which we define below. Such Galois
representations are associated to automorphic representations on
unitary groups, for which the Taylor-Wiles method does work.

We choose to follow the notation of \cite{cht} and \cite{tay06} for
the most part, rather than that used in the rest of this paper. We
apologise for this, but we hope that this should make it easier for
the reader to follow the various references we make to \cite{cht},
\cite{tay06} and \cite{guerberoff2009modularity}. In particular, we work with $l$-adic, as opposed to
$p$-adic, representations. With this in mind, we can state the second
major restriction on our results. In section \ref{1} we were able
(under a hypothesis on the existence of ordinary lifts) to choose the
type of our lifting at any finite place, including those dividing the
characteristic of the residual representation. In the $n$-dimensional
case, we are limited to considering places not dividing $l$ which
split in the CM field used to define our unitary group. The reason we cannot change types
at places dividing $l$ is the absence of an appropriate $R=T$
theorem. In \cite{cht}, \cite{tay06}, and \cite{guerberoff2009modularity} results are only obtained for
representations which are crystalline over an unramified extension of
$\Q_l$, whereas to consider non-trivial types at $l$ one needs to be
able to prove an $R=T$ theorem for representations which only become
crystalline over a ramified extension. To our knowledge, the only such
theorems in the literature are those for weight two Hilbert modular
forms in \cite{kis04} and \cite{gee052}, and the work of \cite{kis062}
for modular forms over totally real fields in which $p$ splits
completely. However, the work of \cite{tay06} reduces the task of
proving such theorems to a purely local analysis of the irreducible
components of certain deformation spaces, and we hope to return to
this question in the future. Note that if we had such $R=T$ theorems
then the framework presented here would immediately allow us to prove
a theorem allowing one to choose the type of an automorphic lift at
places dividing $l$.

When the first draft of this paper was written, the only $R=T$
theorems available for unitary groups were those of \cite{cht} and
\cite{tay06}, which required a square-integrability hypothesis at a
finite place. Thanks to the proof of the fundamental lemma and
subsequent work of Chenevier, Harris, Labesse and Shin, this
hypothesis has been removed in \cite{guerberoff2009modularity}, which
has allowed us to strengthen our results while simplifying their
proofs. $R=T$ theorems have also been proved in the
ordinary case by Geraghty (\cite{ger}), and the analogues of the
results of this section are proved in \cite{GG}.

We begin by recalling from section 1.1 of \cite{cht} the definition of
the group $\gn$, and the relationship between representations valued
in $\gn$ and essentially self-dual representations valued in
$\GL_n$. Let $\gn$ be the group scheme over $\Z$ which is the
semi-direct product of $\GL_n\times\GL_1$ by the group $\{1,j\}$,
which acts on $\GL_n\times\GL_1$
via $$j(g,\mu)j^{-1}=(\mu^tg^{-1},\mu).$$ Let $\nu:\gn\to\GL_1$ be the
homomorphism sending $(g,\mu)\mapsto\mu$ and $j\mapsto -1$, and let
$\gn^0$ be the connected component of $\gn$. We let
$\mathfrak{g}_n:=\Lie\GL_n\subset\Lie\gn$, and let $\ad$ denote the
adjoint action of $\gn$ on $\mathfrak{g}_n$. Let $\mathfrak{g}_n^0$ be
the trace zero subspace of $\mathfrak{g}_n$. Let $F/F^+$ be an
extension of number fields of degree $2$, with $F^+$ totally real and
$F$ CM. Let $c\in G_{F^+}$ be a complex conjugation. Then we have
\begin{lemma}
	\label{5.1}Suppose that $k$ is a field, that $\chi:G_{F^+}\to k^\times$ is a continuous homomorphism, and that $$\rho:G_F\to\GL_n(k)$$is absolutely irreducible, continuous, and satisfies $\chi\rho^\vee\cong\rho^c$. Then there exists a continuous homomorphism $$r:G_{F^+}\to\gn(k)$$ such that $r|_{G_F}=\rho$, $(\nu\circ r)|_{G_F}=\chi|_{G_F}$, and $r(c)\notin\gn^{0}(k)$. There is a bijection between $\GL_n(k)$-conjugacy classes of such extensions of $\rho$ and $k^\times/(k^\times)^2$, so that in particular if $k$ is algebraically closed then $r$ is unique up to $GL_n(k)$-conjugacy.
\end{lemma}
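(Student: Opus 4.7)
The plan is to define $r$ on the index-two normal subgroup $G_F \triangleleft G_{F^+}$ by $r(g) = (\rho(g), \chi(g))$ and on a fixed complex conjugation $c$ by $r(c) = (B,\mu) j$, for suitable $B \in \GL_n(k)$ and $\mu \in k^\times$ to be determined. This automatically places $r(c)$ outside $\gn^0(k)$, and the requirement that $r$ extend to a group homomorphism amounts to two conditions: $r(c)r(g)r(c)^{-1} = r(cgc^{-1})$ for $g \in G_F$, and $r(c)^2 = r(c^2)$. Using the multiplication rule $j(h,\lambda)j^{-1} = (\lambda\,{}^t h^{-1},\lambda)$ in $\gn$, the first condition translates to the relation
\[
(\ast) \qquad \rho(cgc^{-1}) = \chi(g)\, B\, \rho(g)^{-t}\, B^{-1} \quad\text{for all } g\in G_F,
\]
which is exactly the hypothesis $\chi \rho^\vee \cong \rho^c$; take $B$ to be any matrix realising it. Schur's lemma, applied via the absolute irreducibility of $\rho$, then makes $B$ unique up to $k^\times$-scalars. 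The second condition unpacks to $B\mu\,{}^t B^{-1} = \rho(c^2)$ together with $\mu^2 = \chi(c^2)$.

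To exhibit $\mu$, I would apply $(\ast)$ once and then again to $cgc^{-1} \in G_F$ and compose; the composite reads
\[
\rho(c^2)\,\rho(g)\,\rho(c^2)^{-1} \;=\; (B\,{}^t B^{-1})\,\rho(g)\,(B\,{}^t B^{-1})^{-1},
\]
so by Schur's lemma $B\,{}^t B^{-1} = \alpha\rho(c^2)$ for some $\alpha \in k^\times$. Setting $\mu = \alpha^{-1}$ already gives $B\mu\,{}^t B^{-1} = \rho(c^2)$. For the remaining compatibility $\mu^2 = \chi(c^2)$, transpose the identity $B\,{}^t B^{-1} = \alpha\rho(c^2)$ to express $\rho(c^2)^{-t}$ in terms of $B$ and $\alpha$, and then substitute into $(\ast)$ specialised at $g = c^2$; the two expressions one obtains for $B\rho(c^2)^{-t}B^{-1}$ match precisely when $\alpha^{-2} = \chi(c^2)$, as required. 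Continuity of $r$ is automatic since $r|_{G_F}$ is continuous and $G_F$ is open in $G_{F^+}$.

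For the uniqueness statement, the only remaining freedom in the construction is to replace $B$ by $\lambda B$ for $\lambda \in k^\times$, and this leaves $\mu$ unchanged because $\mu$ depends on $B$ only through $B\,{}^t B^{-1}$. A direct calculation shows that conjugating $r$ by $(\kappa I, 1) \in \gn(k)$ replaces $B$ by $\kappa^2 B$ and preserves $\mu$, so two choices of $B$ differing by a scalar $\lambda$ produce $\GL_n(k)$-conjugate extensions of $\rho$ if and only if $\lambda \in (k^\times)^2$. This yields the claimed bijection with $k^\times/(k^\times)^2$; when $k$ is algebraically closed the group is trivial, so $r$ is unique up to $\GL_n(k)$-conjugacy. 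The main obstacle is the bookkeeping with transposes and inverses needed to see that the constraint $\mu^2 = \chi(c^2)$ is automatically forced by the choice of $B$ coming from $(\ast)$; once this self-consistency is verified, everything else is a formal manipulation in $\gn$.
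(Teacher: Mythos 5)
Your argument is correct, and it is a clean self-contained proof. The paper itself simply cites Lemma~1.1.4 of Clozel--Harris--Taylor rather than giving an argument, so there is no direct comparison to make; but the strategy you use (define $r$ on $G_F$ by $(\rho,\chi)$ and on a complex conjugation $c$ by $(B,\mu)j$, translate homomorphy into the two conditions $(\ast)$ and $r(c)^2 = r(c^2)$, invoke Schur's lemma to pin down the intertwiner $B$ up to scalar, and check that the scalar $\mu$ is both forced and self-consistent) is precisely the standard route, and is what the cited lemma does. Two small things worth making explicit when writing this up: first, the condition $\chi(cgc^{-1})=\chi(g)$ needed for the $\GL_1$-component of $r(c)r(g)r(c)^{-1}=r(cgc^{-1})$ is automatic because $\chi$ has abelian target; second, in the uniqueness step you should justify why only scalar matrices are allowed in the $\GL_n(k)$-conjugation, namely that any $\kappa$ with $\kappa r \kappa^{-1}$ still restricting to $\rho$ on $G_F$ must centralise $\rho$ and hence (again by Schur) be scalar, after which your computation $\kappa B\,{}^t\kappa = \kappa^2 B$ and the $\lambda$-invariance of $B\,{}^tB^{-1}$ give the bijection with $k^\times/(k^\times)^2$.
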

\begin{proof}
	This is a special case of Lemma 1.1.4 of \cite{cht}.
\end{proof}

We say that a cuspidal automorphic representation $\pi$ of $\GL_n(\A_F)$ is RACSDC (regular, algebraic, conjugate self dual, cuspidal) if 
\begin{itemize}
	\item $\pi^\vee\cong\pi^c$, and
	\item $\pi_\infty$ has the same infinitesimal character as some irreducible algebraic representation of $\Res_{F/\Q}\GL_n$.
\end{itemize}
We wish to define the weight of such a representation. Let $a\in(\Z^n)^{\Hom(F,\C)}$ satisfy 
\begin{itemize}
	\item $a_{\tau,1}\geq\dots\geq a_{\tau,n}$ 
	\item $a_{\tau c,i}=-a_{\tau,n+1-i}$
\end{itemize}for all $\tau\in\Hom(F,\C)$, where $c$ again denotes complex conjugation.
Let $\Xi_a$ denote the irreducible algebraic representation of $\GL_n^{\Hom(F,\C)}$ which is the tensor product over $\tau\in\Hom(F,\C)$ of the irreducible representations of $\GL_n$ with highest weights $a_\tau$. We say that an RACSDC representation $\pi$ of $\GL_n(\A_F)$ has weight $a$ if $\pi_\infty$ has the same infinitesimal character as $\Xi_a^\vee$.

Let $\pi$ be an irreducible smooth $\Qbar_l$-representation of
$\GL_n(K)$, $K$ a finite extension of $\Q_p$, $p\ne l$. Then as in section 3.1
of \cite{cht} we let $r_l(\pi)$ denote the $l$-adic Galois
representation associated to the Weil-Deligne representation
$\rec_l(\pi^\vee\otimes |\cdot|^{(1-n)/2})$, provided this exists.
Here $\rec_l$ is the local Langlands correspondence of \cite{ht01},
so that for example if $\pi=\nind(\chi_1\times\dots\times\chi_n)$ is
an irreducible principal series representation,
then \[\rec_l(\pi)=\chi_1\circ\Art_K^{-1}\oplus\dots\oplus\chi_n\circ\Art_K^{-1}.\]
\begin{prop}
	\label{5.2}Let $\iota:\Qbar_l\isoto\C$. Suppose that $\pi$ is an RACSDC automorphic representation of $\GL_n(\A_F)$. Then there is a continuous representation $r_{l,\iota}(\pi):G_F\to\GL_n(\Qbar_l)$ satisfying:
	\begin{enumerate}
		\item If $v\nmid l$
                  then $$r_{l,\iota}(\pi)|^{ss}_{G_{F_v}}=r_l(\iota^{-1}\pi_{v})^\vee(1-n)^{ss}.$$
		\item $r_{l,\iota}(\pi)^c=r_{l,\iota}(\pi)^\vee\epsilon^{1-n}$, where $\epsilon$ is the $l$-adic cyclotomic character. 
		\item If $v|l$ then $r_{l,\iota}(\pi)|_{G_{F_v}}$ is potentially semi-stable, and if furthermore $\pi_{v}$ is unramified then $r_{l,\iota}(\pi)|_{G_{F_v}}$ is crystalline.
	\end{enumerate}
\end{prop}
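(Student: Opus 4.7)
The plan is to deduce this proposition by citing the construction of Galois representations attached to RACSDC automorphic representations of $\GL_n(\A_F)$ from the literature; there is nothing genuinely new to prove here. The construction proceeds by descent of $\pi$ to an automorphic representation on a unitary group and extraction of $r_{l,\iota}(\pi)$ from the $l$-adic cohomology of the associated Shimura variety.

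First I would use base change (as in the work of Labesse) to descend $\pi$ to a cohomological automorphic representation $\Pi$ on a suitable unitary similitude group over $F^+$, whose associated PEL Shimura variety is where the Galois representation will live. The regularity and conjugate self-duality of $\pi$ guarantee that $\Pi_\infty$ is cohomological for an appropriate coefficient system determined by the weight $a$ of $\pi$. In the generality needed here, the existence of $r_{l,\iota}(\pi)$ is provided by combining the main construction of \cite{cht} (which works under an auxiliary square-integrability hypothesis at a finite place, used to ensure that $\Pi$ contributes to the cohomology of a compact Shimura variety) with the subsequent unconditional construction in \cite{guerberoff2009modularity}, which uses the proof of the fundamental lemma and the work of Chenevier, Harris, Labesse and Shin to remove this hypothesis.

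Each of the properties (1)--(3) is built into this construction. Property (1), local-global compatibility up to semisimplification at $v\nmid l$, follows from the Harris-Taylor description of the Frobenius action on the cohomology at unramified places, extended via the methods of Taylor-Yoshida to the remaining finite places not dividing $l$. Property (2), the conjugate self-duality relation $r_{l,\iota}(\pi)^c\cong r_{l,\iota}(\pi)^\vee\epsilon^{1-n}$, follows from $\pi^\vee\cong\pi^c$ together with property (1) and Chebotarev density (alternatively, directly from the Poincar\'e duality pairing on the cohomology). Property (3) at places $v|l$ is a consequence of $p$-adic Hodge theory applied to the $l$-adic cohomology of the Shimura variety: the \'etale cohomology of a smooth proper variety over $F_v$ is always potentially semistable, and crystalline when the variety has good reduction at $v$; the latter is precisely the situation corresponding to hyperspecial level structure, i.e.\ when $\pi_v$ is unramified. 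The main difficulty is historical rather than present here: for the purposes of this paper, the proof is a straightforward citation of the above references.
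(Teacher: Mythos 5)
Your approach is correct and is essentially the same as the paper's, which simply cites Theorem 1.2 of \cite{BLGHT} (a compilation of exactly the results you describe). Your write-up is a more detailed sketch of the underlying construction, citing \cite{cht} and \cite{guerberoff2009modularity} directly rather than the summary statement in \cite{BLGHT}, but both amount to a straightforward appeal to the literature.
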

\begin{proof}
	See Theorem 1.2 of \cite{BLGHT}.
\end{proof}

We can take $r_{l,\iota}(\pi)$ to be valued in $\GL_n(\bigO)$ with
$\bigO$ the ring of integers of a finite extension of $\Q_l$; reducing
modulo the maximal ideal of $\bigO$ and semisimplifying gives a well
defined semisimple
representation $$\overline{r}_{l,\iota}(\pi):G_F\to\GL_n(\Fbar_l).$$
We say that a continuous semisimple representation
$r:G_F\to\GL_n(\Qbar_l)$ is automorphic of weight $a$ if it is isomorphic to $r_{l,\iota}(\pi)$ for some
$\iota:\Qbar_l\isoto\C$, and some RACSDC automorphic form $\pi$ of
weight $a$. We say that it is
automorphic of weight $a$ and level prime
to $l$ if furthermore $\pi_l$ is unramified. We say that a continuous,
semisimple representation $\overline{r}:G_F\to\GL_n(\Fbar_l)$ is
automorphic of weight $a$ if it lifts
to a representation $r:G_F\to\GL_n(\Qbar_l)$ which is automorphic of
weight $a$ and level prime to $l$.

From Lemma \ref{5.1} and Proposition \ref{5.2}(2), we see that there
is a unique extension (up to $\GL_n(\Qbar_l)$-conjugation) of
$r_{l,\iota}(\pi)$ to a
homomorphism $$r'_{l,\iota}(\pi):G_{F^+}\to\mathcal{G}_n(\Qbar_l)$$
with $\nu\circ
r'_{l,\iota}(\pi)=\epsilon^{1-n}\delta_{F/F^+}^{\mu_\pi}$, and
$r'_{l,\iota}(\pi)(c_v)\notin\gn^{0}(\Qbar_l)$ for any infinite place
$v$ of $F^+$, where $c_{v}$ denotes complex conjugation at $v$. Here
$\delta_{F/F^+}$ is the unique nontrivial character of $\Gal(F/F^+)$,
and $\mu_\pi\in\Z/2\Z$. Accordingly, from now on we will work with
representations to $\gn$.

We now define the deformation rings we work with, following
\cite{cht}, \S 2.2. Suppose from now on that every place of $F^+$
dividing $l$ splits in $F$, and suppose that $l>n$ and $l$ is
unramified in $F^{+}$. Take $K/\Q_l$ finite with
$\#\Hom(F^+,K)=[F^+:\Q]$. Let $\bigO=\bigO_K$, let $\lambda$ be the
maximal ideal of $\bigO$, $k=\bigO/\lambda$. Let $\mathcal{C}_\bigO^f$
be the category of Artinian local $\bigO$-algebras $R$ for which the
map $\bigO\to R$ induces an isomorphism on residue fields. Let
$\mathcal{C}_\bigO$ be the category of complete local $\bigO$-algebras
with residue field $k$. We will assume from now on that $K$ is large
enough that all representations that we wish to consider which are
valued in finite extensions of $K$ are in fact valued in $K$; this may
be achieved by replacing $K$ by a finite extension.

Fix a continuous homomorphism $$\overline{r}:G_{F^+}\to\gn(k)$$ with
$\rbar^{-1}(\GL_n(k)\times\GL_1(k))=G_F$. Let $\rhobar:=\rbar|_{G_F}$. Assume that $\rbar|_{G_F}$
is absolutely irreducible and automorphic of weight $a$. To be precise, suppose that
$\rbar=\rbar'_{l,\iota}(\pi)$, where $\pi$ is a RACSDC
automorphic representation of level prime to $l$ and weight $a$. 

Suppose further that for all $\tau\in(\Z^n)^{\Hom(F,\C)}$ we have
either $$l-1-n\geq a_{\tau,1}\geq\dots\geq a_{\tau,n}\geq 0$$
or $$l-1-n\geq a_{\tau c,1}\geq\dots\geq a_{\tau c,n}\geq 0.$$

If $R$ is an object of $\mathcal{C}_\bigO$ then a \emph{lifting} of $\rbar$
to $R$ is a continuous homomorphism $$r:G_{F^+}\to\gn(R)$$with
$r\text{ mod }\mathfrak{m}_R=\rbar$ and $\nu\circ r=\epsilon^{1-n}\delta_{F/F^+}^{\mu_\pi}$. A
\emph{deformation} of $\rbar$ to $R$ is a $\ker(\gn(R)\to\gn(k))$-conjugacy
class of liftings.

We wish, as usual, to impose some local conditions on the deformations we consider. Let $S_l$ be the set of places of $F^+$ above $l$. Fix $T$ a finite set of finite places of $F^+$ containing $S_l$ and all places at which $\overline{r}$ is ramified, and assume that all places in $T$ split in $F$. We choose a set $\tilde{T}$ of places of $F$ above the places in $T$ so that we may write the places of $F$ above places in $T$ as $\tilde{T}\coprod c\tilde{T}$. If $X\subset T$, write $\tilde{X}$ for the set of places in $\tilde{T}$ above places of $X$. We will sometimes identify $F^{+}_{v}$ and $F_{{\tilde{v}}}$.

Write $G_{F^+,T}=\Gal(F(T)/F^{+})$ where $F(T)$ is the maximal
extension of $F$ unramified outside of the places lying over $T$. From now on, we consider $\rbar$, $\chi$ as $G_{F^+,T}$-representations, and all our global deformations are to $G_{F^+,T}$-representations. By Proposition 1.2.9 of \cite{cht} there is a universal deformation $r^{univ}_T$ of $\rbar$ over an object $R_T^{{global}}$ of $\mathcal{C}_\bigO$. Note that since $\rbar|_{G_{F}}$ is absolutely irreducible, $H^0(G_{F^{+},T},\ad\rbar)=0$.

Write $$T=S_l\coprod Y.$$For each place $v\in T$ there is a universal lifting of $\rbar|_{G_{F_v}}$ over an object $R_{\tv}^{loc}$ of $\mathcal{C}_\bigO$. We consider the following $\ker(\GL_n(R_{\tv}^{loc})\to\GL_n(k))$-invariant quotients of $R_{\tv}^{loc}$.
\begin{enumerate}
	\item If $v\in S_l$ then $R_{\tv}$ is the maximal quotient of $R_{\tv}^{loc}$ such that for every Artinian quotient $A$ of $R_{\tv}$ the pushforward of the universal lifting of $\rbar|_{G_{F_{\tv}}}$ to $A$ is in the essential image of the Fontaine-Laffaille functor $\mathbb{G}_{\tv}$ of \S 1.4.1 of \cite{cht}.
	\item If $v\in Y$ then choose an inertial type $\tau_\tv$ of $I_{F_\tv}$ (as in \S \ref{local}), and let $R_\tv$ be the maximal quotient of $R_\tv^{loc}$ over which the universal lift of $\rbar|_\gfv$ is of type $\tau_\tv$. 
\end{enumerate}
Say that a lifting is of type $\tau$ if for each $v\in T$ it factors
through the ring $R_\tv$ constructed above. Then there is a universal
deformation $r_\tau^{univ}$ of $\rbar$ of type $\tau$ over an object
$R_\tau^{univ}$ of $\mathcal{C}_\bigO$. In the terminology of section
2.3 of \cite{cht}, this is the universal deformation ring
$R_{\mcal{S}_\tau}$ for the deformation
problem \[\mcal{S}_\tau=(F/F^+,T,\tilde{T},\bigO,\rbar,\epsilon^{1-n}\delta_{F/F^+}^{\mu_\pi},\{R_{\tilde{v}}\}_{v\in
  T}).\]

As in \S 1.5 of \cite{cht}, we say that a subgroup $H\subset\GL_{n}(k)$ is \emph{big} if:
\begin{itemize}
	\item $H$ has no $l$-power order quotients.
	\item $H^0(H,\mathfrak{g}^{0}_n(k))=(0)$, 
	\item $H^1(H,\mathfrak{g}^{0}_n(k))=(0)$,
	\item For all irreducible $k[H]$-submodules $W$ of $\mathfrak{g}_n(k)$ we can find $h\in H$ and $\alpha\in k$ satisfying the following properties. The $\alpha$-generalised eigenspace $V_{h,\alpha}$ of $h$ on $k^n$ is one-dimensional. Let $\pi_{h,\alpha}:k^n\to V_{h,\alpha}$ be the $h$-equivariant projection of $k^n$ to $V_{h,\alpha}$, and let $i_{h,\alpha}:V_{h,\alpha}\into k^n$ be the $h$-equivariant injection of $V_{h,\alpha}$ into $k^n$. Then $\pi_{h,\alpha}\circ W\circ i_{h,\alpha}\neq (0)$.
\end{itemize}We say that a subgroup $H\subset\gn(k)$ is $\emph{big}$
if $H$ surjects onto $\gn(k)/\gn^{0}(k)$ and $H\cap\gn^{0}(k)$ is big.

We assume from now on that $\overline{F}^{\ker\ad\rbar|_{G_F}}$ does not contain $F(\zeta_l)$, and that $\rbar(G_{F^+(\zeta_l)})$ is big. 

We wish to obtain a lower bound on the dimension of $R_{\tau}^{univ}$. This is straightforward, and is in fact done in \cite{cht}. 

\begin{lemma}
	\label{5.5}$\dim R_\tau^{univ}\geq
        1-\frac{n}{2}[F^+:\Q](1+(-1)^{1+n+\mu_\pi})$. In particular, if
        $\mu_\pi\equiv n\pmod{2}$, then $\dim R_\tau^{univ}\geq 1$.
\end{lemma}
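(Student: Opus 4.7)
The plan is to follow the standard Galois-cohomological dimension count of \cite{cht} Section 2.2--2.3 (see also \cite{tay06} and \cite{guerberoff2009modularity}), with the only new ingredient being the local dimension at places in $Y$ supplied by Theorem \ref{2.6}. Since $\rbar|_{G_F}$ is absolutely irreducible and $\rbar(c)\notin\gn^0(k)$, we have $H^0(G_{F^+,T},\ad\rbar)=k$, so the framed deformation ring $R^{univ,\square_T}_{\mathcal{S}_\tau}$ is formally smooth of relative dimension $n^2|T|-1$ over $R_\tau^{univ}$, and it suffices to give a lower bound on $\dim R^{univ,\square_T}_{\mathcal{S}_\tau}$.

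By the cohomological presentation (cf.\ \cite{cht} Proposition 2.2.9), there is an isomorphism
\[
R^{univ,\square_T}_{\mathcal{S}_\tau}\cong R^{loc}_{\mathcal{S}_\tau}[[X_1,\dots,X_g]]/(f_1,\dots,f_{g+h_1}),
\]
with $R^{loc}_{\mathcal{S}_\tau}=\widehat{\otimes}_{v\in T,\bigO}R_{\tv}$, $g$ the dimension of a certain Selmer group, and $h_1$ the dimension of the corresponding dual Selmer group in $\ad\rbar(1)$. Theorem \ref{2.6} gives $\dim R_{\tv}=n^2+1$ for $v\in Y$, while the standard Fontaine-Laffaille computation (cf.\ \cite{cht} Corollary 2.4.3) gives $\dim R_{\tv}=1+n^2+\tfrac{n(n-1)}{2}[F^+_v:\Q_l]$ for $v\in S_l$. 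Summing yields $\dim R^{loc}_{\mathcal{S}_\tau}=n^2|T|+\tfrac{n(n-1)}{2}[F^+:\Q]+1$, and hence
\[
\dim R_\tau^{univ}\;\geq\;\tfrac{n(n-1)}{2}[F^+:\Q]+2-h_1.
\]

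It remains to bound $h_1$ from above. Using the Greenberg--Wiles formula together with the bigness of $\rbar(G_{F^+(\zeta_l)})$ and the assumption $F(\zeta_l)\not\subset\overline{F}^{\ker\ad\rbar|_{G_F}}$ to force $H^0(G_{F^+,T},\ad\rbar(1))=0$ and control the relevant global cohomology, one reduces (via Tate's global Euler--Poincar\'e formula) to the archimedean contribution. For each real place $v$ of $F^+$, $\nu(\rbar(c_v))=\epsilon^{1-n}(c_v)\delta_{F/F^+}^{\mu_\pi}(c_v)=(-1)^{n+1+\mu_\pi}$; writing $\rbar(c_v)=(M_v,\mu_v)j$, the identity $\rbar(c_v)^2=1$ forces $M_v^t=\mu_v M_v$, and the substitution $Y=M_v^{-1}X$ identifies the $\ad\rbar(c_v)$-fixed subspace of $M_n(k)$ with $\{Y:Y=-\mu_v Y^t\}$, giving
\[
h^0(G_{F^+_v},\ad\rbar)=\tfrac{n^2+(-1)^{n+\mu_\pi}n}{2}.
\]
Summing over the $[F^+:\Q]$ real places and tracking the contributions through the Euler--Poincar\'e formula produces exactly the correction $\frac{n}{2}[F^+:\Q](1+(-1)^{1+n+\mu_\pi})$ subtracted from $1$ in the statement. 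The ``in particular'' clause is immediate: $\mu_\pi\equiv n\pmod 2$ forces $(-1)^{1+n+\mu_\pi}=-1$, killing the correction term. The main obstacle is the sign/convention bookkeeping in the Greenberg--Wiles/Euler--Poincar\'e step, which is a routine adaptation of \cite{cht} Section 2.3.
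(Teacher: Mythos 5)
Your overall strategy is the same as the paper's: reduce to a Greenberg--Wiles/Euler--Poincar\'e dimension count as in \cite{cht} \S 2.3, with Theorem \ref{2.6} supplying $\dim R_{\tv}=n^2+1$ at $v\in Y$ as the only new ingredient. The paper simply invokes Corollary 2.3.5 of \cite{cht}, which already packages all the framing and Selmer-group bookkeeping into a single inequality, and then plugs in local dimensions; you instead attempt to re-derive that inequality, and two concrete errors creep in.

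First, $H^0(G_{F^+,T},\ad\rbar)$ is $0$, not $k$ (the paper notes this explicitly). Although $\ad\rbar|_{G_F}$ has invariants $k\cdot I$ by Schur's lemma, the action of $\gn$-conjugation on $\mathfrak{g}_n$ sends $X\mapsto -{}^tX$ via $j$, so $\ad(\rbar(c))(\lambda I)=-\lambda I$ and the $c$-invariants vanish. Consequently the framed ring is formally smooth of relative dimension $n^2|T|$ over $R_\tau^{univ}$, not $n^2|T|-1$, which throws your subsequent arithmetic off by one.

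Second, the archimedean computation has a sign slip. From $\rbar(c_v)=(M_v,\mu_v)j$ and $\rbar(c_v)^2=1$ one gets $M_v^t=\mu_v M_v$, and the fixed space of $\ad\rbar(c_v)$ is indeed $\{Y:Y=-\mu_vY^t\}$ after the substitution $Y=M_v^{-1}X$. But $\nu(j)=-1$, so $\mu_v=-\chi(c_v)=(-1)^{n+\mu_\pi}$, and the dimension is $\tfrac{n^2-\mu_v n}{2}=\tfrac{n^2-(-1)^{n+\mu_\pi}n}{2}$, not $\tfrac{n^2+(-1)^{n+\mu_\pi}n}{2}$ as you write. (With the correct sign this sum over the real places exactly reproduces the term $n\sum_{v|\infty}(n+\chi(c_v))/2$ appearing in \cite{cht} Corollary 2.3.5.) Finally, the step ``tracking the contributions through the Euler--Poincar\'e formula produces exactly the correction'' is stated without any computation; given the two sign issues above, this is precisely the place where the argument needs to be carried out carefully rather than asserted. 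Citing Corollary 2.3.5 of \cite{cht} outright, as the paper does, avoids all of this.
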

\begin{proof}
By Corollary 2.3.5 of \cite{cht}, $$\dim R_{\tau}^{univ}\geq
1+\sum_{v\in T}(\dim
R_{\tv}-n^{2}-1)-\dim_{k}H^{0}(G_{F^{+},T},\ad\rbar(1))-n\sum_{v|\infty}(n+\chi(c_{v}))/2,$$where $\chi=\epsilon^{1-n}\delta_{F/F^+}^{\mu_\pi}$.

Now, by Theorem \ref{2.6} of this paper and Corollary 2.4.3 of \cite{cht}, we see that if $v\nmid l$ then $\dim R_{\tv}=1+n^{2}$, and if $v|l$ then $\dim R_{\tv}=1+n^{2}+\frac{n(n-1)}{2}[F_{\tv}:\Q_{l}]$. Thus \begin{align*}\sum_{v\in T}(\dim R_{\tv}-n^{2}-1)&=\frac{n(n-1)}{2}[F:\Q]\\&=n\sum_{v|\infty}(n-1)/2.\end{align*}The assumption that $\rbar(G_{F^+(\zeta_l)})$ is big implies that $H^{0}(G_{F^{+},T},\ad\rbar(1))=0$, and the result follows.\end{proof}

In fact,  $\mu_\pi\equiv n\pmod{2}$; this is obtained in \cite{guerberoff2009modularity} as a consequence of the Taylor-Wiles method.

We wish to use the $R=T$ theorems proved in
\cite{guerberoff2009modularity} to show that $(R_\tau^{univ})^{red}$
is finite over $\bigO$.  We now make a base change, as in the proof of
Theorem 5.2 of \cite{tay06}. Enlarge $Y$ if necessary, so that
$\tilde{T}\cup c\tilde{T}$ contains all the places at which $\pi$ is
ramified. Because $\overline{F}^{\ker\ad\rbar|_{G_F}}$ does not
contain $F(\zeta_l)$, we can choose a finite place $v_1$ of $F^{+}$
such that:
\begin{itemize}
	\item $v_1\notin T$, and $v_{1}$ splits in $F$. 
	\item $v_1$ is unramified over a rational prime $p$ for which $[F(\zeta_p):F]>n$.
	\item $v_1$ does not split completely in $F(\zeta_l)$.
	\item $\ad\rbar(\Frob_{v_1})=1$.
\end{itemize}
Now choose a totally real field $L^+/F^+$ satisfying the following conditions:
\begin{itemize}
	\item $4|[L^+:\Q]$. 
	\item $L^+/F^+$ is Galois and solvable. 
	\item $L=L^+ F$ is linearly disjoint from $\overline{F}^{\ker\rbar|_{G_F}}(\zeta_l)$ over $F$.
	\item $L/L^+$ is everywhere unramified.
	\item $l$ is unramified in $L^+$.
	\item $v_1$ splits completely in $L/F^{+}$. 
	\item Let $\pi_L$ denote the base change of $\pi$ to $L$. If
          $w$ is a place of $L$ lying over $v\in Y$ then
          $(\pi_{L,w})^{Iw(w)}\neq(0)$ and ${\tau_v}|_{I_{L_{w}}}$ is
          trivial (here $Iw(w)$ is the Iwahori subgroup of
          $\GL_n(\bigO_{L,w})$). In addition, $\operatorname{N}w\equiv 1$ mod $l$ and $\rbar|_{G_{L_w}}$ is trivial.
\end{itemize}

Let $S_1(L^+)$ be the places of $L^+$ above
$v_1$. Let $S_l(L^+)$ denote the places of $L^+$
above $l$. Let $Y(L^+)$ be the places of $L^+$ above elements of
$Y$. Then let $T(L^+)=S_1(L^+)\cup S_l(L^+)\cup Y(L^+)$,
and choose a set $\tilde{T}(L)$ consisting of a choice of a place of
$L$ above every place of $T(L^+)$, in such a way that $\tilde{T}(L)$
contains all places of $L$ lying over places in $\tilde{T}$.

Let $a_L\in(\Z^n)^{\Hom(L,\Qbar_l)}$ be defined by
$a_{L,\tau}=a_{\tau|_F}$, so that $\rbar|_{G_L}$ is automorphic of
weight $a_L$.

We now consider certain deformations of $\rbar|_{G_{L^+}}$. Note that $(\nu\circ\rbar)|_{G_{L^+}}=\epsilon^{1-n}\delta_{L/L^+}^{\mu_\pi}$, with $\mu_\pi$ as above, and $\delta_{L/L^+}$ the unique non-trivial character of $\Gal(L/L^+)$. A lifting of $\rbar|_{G_{L^+}}$ to an object $R$ of $\mathcal{C}_\bigO$ will be a continuous homomorphism $r:G_{L^+}\to\gn(R)$ with $r\text{ mod }\mathfrak{m}_R=\rbar$ and $\nu\circ r=\epsilon^{n-1}\delta_{L/L^+}^{\mu_\pi}$. For each $v\in T(L^+)$ there is a universal lifting of $\rbar|_{G_{L_\tv}}$ over an object $R_\tv^{loc}$. We consider the following $\ker(\GL_n(R_\tv^{loc})\to\GL_n(k))$-invariant quotients $R_\tv$ of $R_\tv^{loc}$.
\begin{enumerate}
	\item If $v\in S_l(L^+)$ then $R_{\tv}$ is the maximal quotient of $R_{\tv}^{loc}$ such that for every Artinian quotient of $R_{\tv}$ the pushforward of the universal lifting of $\rbar|_{G_{F_{\tv}}}$ to $A$ is in the essential image of the Fontaine-Laffaille functor $\mathbb{G}_{\tv}$ of \S 1.4.1 of \cite{cht}. 
	\item If $v\in Y(L^+)$ then let $R_\tv$ be the maximal quotient of $R_\tv^{loc}$ over which for all $\sigma\in I_{L_\tv}$ the universal lift of $\rbar|_{G_{L_\tv}}$ evaluated at $\sigma$ has characteristic polynomial $(X-1)^n$. 
	\item If $v\in S_1(L^+)$ then $R_\tv=R_\tv^{loc}$. 
\end{enumerate} Say that a lifting of $\rbar|_{G_{L^{+}}}$ is of type $\mathcal{S}$ if
it is unramified ouside $T(L^+)$ and if at each $v\in T(L^+)$ it
factors through the ring $R_\tv$ defined above. There is a universal
deformation $r_{\mathcal{S}}^{univ}$ of type $\mathcal{S}$ over an
object $R_\mathcal{S}^{univ}$ of $\mathcal{C}_\bigO$. As above, in the
terminology of \cite{cht} this is the universal deformation ring for
the deformation problem  \[\mcal{S}=(L/L^+,T(L^+),\tilde{T}(L),\bigO,\rbar|_{G_{L^+}},\epsilon^{1-n}\delta_{L/L^+}^{\mu_\pi},\{R_{\tilde{v}}\}_{v\in
  T(L^+)}).\]Now, if we choose a representative for the universal
deformation $r^{univ}_{\mcal{S_\tau}}:G_{F^+}\to\gn(R_\tau^{univ})$,
and restrict it to $G_{L^+}$, the universal property gives
$R_\tau^{univ}$ the structure of an $R_{\mcal{S}}^{univ}$-module.
\begin{thm}
	\label{5.7}With the above assumptions, $(R_\tau^{univ})^{red}$ is a finite $\bigO$-module of rank at least $1$.
\end{thm}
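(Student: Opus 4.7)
The plan is to apply the $R=T$ theorem of \cite{guerberoff2009modularity} to the base-changed deformation problem $\mcal{S}$, obtaining finiteness of $R_\mcal{S}^{univ,red}$ over $\bigO$, and then to deduce the same for $R_\tau^{univ,red}$ via the module structure $R_\mcal{S}^{univ} \to R_\tau^{univ}$ described just before the statement.

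First I would verify the hypotheses of the main theorem of \cite{guerberoff2009modularity} for $\mcal{S}$. The base change $\pi_L$ of $\pi$ to $L$ remains cuspidal, since linear disjointness of $L$ from $\overline{F}^{\ker\rbar|_{G_F}}(\zeta_l)$ over $F$ keeps $\rbar|_{G_L}$ absolutely irreducible; $\pi_L$ is RACSDC of weight $a_L$, and this weight lies in the Fontaine--Laffaille range because $l$ is unramified in $L^+$. The assumption $(\pi_{L,w})^{Iw(w)} \neq 0$ at $w|v \in Y$ gives the desired Iwahori-level condition, the splitting of $v_1$ in $L$ with $\rbar$ trivial at places above supplies the auxiliary Taylor--Wiles-type prime, and bigness of $\rbar(G_{L^+(\zeta_l)})$ follows from that of $\rbar(G_{F^+(\zeta_l)})$ together with the linear disjointness. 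Since $\mu_\pi \equiv n \pmod 2$ (as noted after Lemma \ref{5.5}), applying the $R=T$ theorem yields that $R_\mcal{S}^{univ,red}$ is a finite $\bigO$-algebra.

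Next I would check that $R_\tau^{univ}$ is finite as an $R_\mcal{S}^{univ}$-algebra. Restricting $r_\tau^{univ}$ to $G_{L^+}$ gives a deformation of $\rbar|_{G_{L^+}}$ that is of type $\mcal{S}$: the Fontaine--Laffaille condition at $l$-adic places is preserved under the unramified base change; at places above $v \in Y$, triviality of $\tau_v|_{I_{L_w}}$ forces the restricted universal inertia action to have characteristic polynomial $(X-1)^n$; and at places above $v_1$ no condition is imposed. This produces the map $R_\mcal{S}^{univ} \to R_\tau^{univ}$. Finiteness of $R_\tau^{univ}$ as an $R_\mcal{S}^{univ}$-algebra then follows by the same mechanism invoked in the proof of Proposition \ref{1.5} (the statement that $R_{F,S}$ is finite over $R_{F',S'}$): using solvability of $L^+/F^+$ and absolute irreducibility of $\rbar|_{G_L}$, one inducts along a composition series of $\Gal(L^+/F^+)$, reducing to cyclic extensions, where each generator of $R_\tau^{univ}$ satisfies an integral relation over $R_\mcal{S}^{univ}$ coming from the $\Gal(L^+/F^+)$-orbit structure.

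Combining these steps, $(R_\tau^{univ})^{red}$ is a finite $\bigO$-module. By Lemma \ref{5.5}, together with $\mu_\pi \equiv n \pmod 2$, one has $\dim R_\tau^{univ} \geq 1$, so $(R_\tau^{univ})^{red}$ has $\bigO$-rank at least $1$. The main obstacle I would expect is the careful verification that all local and global hypotheses for the cited $R=T$ theorem remain valid after base change, particularly the big-image hypothesis and the compatibility of the local types at $Y$ with the ``unipotent inertia'' condition imposed at $Y(L^+)$; these points are technical but essentially standard in this framework.
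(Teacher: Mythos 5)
Your overall strategy coincides with the paper's: establish $\dim R_\tau^{univ}\geq 1$ via Lemma~\ref{5.5} and the parity result $\mu_\pi\equiv n\pmod 2$ from \cite{guerberoff2009modularity}, show $(R_\mcal{S}^{univ})^{red}$ is a finite $\bigO$-algebra using the $R=\mathbb{T}$ theorem of \cite{guerberoff2009modularity} over the base-changed field, and then show $R_\tau^{univ}$ is finite over $R_\mcal{S}^{univ}$ via the restriction map. Your verification that the deformation problem $\mcal{S}$ satisfies the hypotheses of the $R=\mathbb{T}$ theorem is a helpful elaboration of something the paper leaves largely implicit.

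However, your argument for the finiteness of $R_\tau^{univ}$ over $R_\mcal{S}^{univ}$ is the weak point. You propose an induction along a composition series of $\Gal(L^+/F^+)$, reducing to cyclic extensions, with ``each generator of $R_\tau^{univ}$ satisfying an integral relation over $R_\mcal{S}^{univ}$ coming from the $\Gal(L^+/F^+)$-orbit structure.'' It is not clear how to extract such an integral relation: for $n>1$, knowing that $g^m\in G_{L^+}$ does not directly give a monic polynomial satisfied by $\tr\,r_\tau^{univ}(g)$ with coefficients in the image of $R_\mcal{S}^{univ}$ (for $n=1$ this works since $\rho(g)^m=\rho(g^m)$, but for higher $n$ a trace does not propagate that way). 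The paper's actual mechanism, which you should substitute for this sketch, is cleaner: set $\bar R:=R_\tau^{univ}/\mf{m}_\cS$ and consider the pushed-forward representation $\rbar_{\tau,\cS}$ over $\bar R$. Its restriction to $G_{L^+,T(L^+)}$ is equivalent to $\rbar$, hence $\rbar_{\tau,\cS}$ has \emph{finite image}. By Lemma~2.1.12 of \cite{cht}, $\bar R$ is topologically generated over $\bigO/\lambda$ (via $\mf{m}_\cS$) by the traces $\tr\,\rbar_{\tau,\cS}(g)$, $g\in G_{F,T}$. For any prime $\mf{p}\subset\bar R$, the images of these traces in $\bar R/\mf{p}$ are sums of roots of unity of bounded degree, so $\bar R/\mf{p}$ is a finite field; hence $\bar R$ is a $0$-dimensional Noetherian ring, so Artinian with finite residue fields, so finite. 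Topological Nakayama then shows $R_\tau^{univ}$ is finite over $R_\mcal{S}^{univ}$. With this replacement the rest of your argument is sound.
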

\begin{proof}
  By Theorem 3.4 of \cite{guerberoff2009modularity}, $\mu_\pi\equiv
  n\text{ mod }2$, so that Corollary \ref{5.5} shows that $\dim
  (R_\tau^{univ})^{red}=\dim R_\tau^{univ}\geq 1$. As in the proof of
  Proposition \ref{1.5}, it suffices to show that
  $(R_\tau^{univ})^{red}$ is a finite $\bigO$-algebra. We claim that
  $(R_\tau^{univ})^{red}$ is a finite
  $(R_\mathcal{S}^{univ})^{red}$-module; this follows just as in
  Theorem 4.2.8 of \cite{kiscdm} and Lemma 3.6 of \cite{kw}, using
  Lemma 2.1.12 of \cite{cht}. 

More precisely, write $\mf{m}_{\cS}$ for the maximal ideal of
$R_\mathcal{S}^{univ}$, and let
$\bar{R}:=R_\tau^{univ}/\mf{m}_\cS$. Let $\rbar_{\tau,\cS}$ denote the
$\bar{R}$-representation obtained from the universal representation
over $R_\tau^{univ}$. Since  $\rbar_{\tau,\cS}|_{G_{L^+,T(L^+)}}$ is
  equivalent to $\rbar|_{G_{L^+,T(L^+)}}$, we see that
    $\rbar_{\tau,\cS}$ has finite image. By Lemma 2.1.12 of
    \cite{cht}, $\bar{R}$ is generated by the traces of the
    $\rbar_{\tau,\cS}(g)$, $g\in G_{F,T}$. Let $\mf{p}$ be a
    prime ideal of $\bar{R}$. Since the image of $\rbar_{\tau,\cS}$ is
    finite, the images of the traces  of the
    $\rbar_{G_{L^+,T(L^+)}}(g)$ in $\bar{R}/\mf{p}$ are sums of roots
    of unity of bounded degree, so $\bar{R}/\mf{p}$ is finite, and
    thus a field. So $\bar{R}$ is a 0-dimensional Noetherian ring, so
    it is Artinian, and thus a finite product of Artin local rings
    with finite residue fields, so it is finite. Since
    $R_\tau^{univ}/\mf{m}_\cS$ is finite, $R_\tau^{univ}$ is finite
    over $R_\mathcal{S}^{univ}$ by the topological version of
    Nakayama's lemma, so  $(R_\tau^{univ})^{red}$ is a finite
  $(R_\mathcal{S}^{univ})^{red}$-module, as claimed.

 Thus we need only check that
  $(R_\mathcal{S}^{univ})^{red}$ is finite over $\bigO$. By Theorem
  3.4 of \cite{guerberoff2009modularity} we have $(R_\mathcal{S}^{univ})^{red}\isoto
  \mathbb{T}$, where $\mathbb{T}$ is a certain Hecke algebra, which is
  finite over $\bigO$ by construction. The result follows.
\end{proof}
From this, together with Theorem 3.4 of \cite{guerberoff2009modularity}, one immediately obtains the following result, where for the convenience of the reader we have incorporated all the assumptions made into the statement of the theorem.
\begin{thm}
  \label{5.8}Let $F=F^+E$ be a CM field, where $F^+$ is totally real
  and $E$ is imaginary quadratic. Let $n\geq 1$ and let $l>n$ be a
  prime which is unramified in $F^+$ and split in $E$. Suppose
  that $$\rhobar:G_F\to\GL_n(\Fbar_l)$$is an irreducible
  representation which is unramified at all places of $F$ lying above
  primes which do not split in $E$, and which satisfies the following
  properties.
	\begin{enumerate}
        \item  $\rhobar$
          is automorphic of weight $a$, where we
          assume that for all $\tau\in(\Z^n)^{\Hom(F,\C)}$ we have
          either $$l-1-n\geq a_{\tau,1}\geq\dots\geq a_{\tau,n}\geq
          0$$ or $$l-1-n\geq a_{c\tau,1}\geq\dots\geq a_{c\tau,n}\geq
          0.$$ Note than in particular these conditions imply that
          $\rhobar^{c}\cong\rhobar^{\vee}\epsilon^{1-n}$.
       
		\item $\overline{F}^{\ker\ad\rhobar}$ does not contain $F(\zeta_l)$. 
		\item $\rhobar(G_{F^+(\zeta_l)})$ is big.
	\end{enumerate}
	Let $Y$ be a finite set of finite places of $F^+$ which split
        in $F$, which does not contain any places dividing
        $l$. For each $v\in Y$, choose an inertial type $\tau_v$ and a
        place $\tv$ of $F$ above $v$. Assume that $\rhobar|_\gfv$ has
        a lift to characteristic zero of type $\tau_v$.
	
	Then there is an automorphic representation $\pi$ on $\GL_n(\A_F)$ of weight $a$ and level prime to $l$ such that 
	\begin{enumerate}
		\item $\rbar_{l,\iota}(\pi)\cong\rhobar$. 
		\item $r_{l,\iota}(\pi)|_\gfv$ has type $\tau_v$ for all $v\in Y$. 
		\item $\pi$ is unramified at all places of $F$ at which $\rhobar$ is unramified, except possibly for the places lying over elements of $Y$.
	\end{enumerate}
\end{thm}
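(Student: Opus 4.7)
The plan is to deduce the theorem essentially as a direct corollary of Theorem \ref{5.7} (whose proof already carries the weight of the argument) combined with the automorphy lifting theorem of \cite{guerberoff2009modularity}. First I would extend $\rhobar$ to a continuous homomorphism $\rbar:G_{F^+}\to\gn(k)$ using Lemma \ref{5.1}, with $\rbar^{-1}(\gn^0(k))=G_F$ and $\nu\circ\rbar=\epsilon^{1-n}\delta_{F/F^+}^{\mu_\pi}$; such an $\rbar$ exists because $\rhobar$ is the reduction of an $r_{l,\iota}(\pi)$, to which one already attaches an extension $r'_{l,\iota}(\pi)$ to $\gn$ whose reduction supplies $\rbar$.

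With the choices of places $\tv$ and types $\tau_v$ given in the statement, form the deformation problem $\mcal{S}_\tau$ whose local conditions are Fontaine--Laffaille at $v\in S_l$ and type $\tau_v$ at $v\in Y$; the fact that $\rhobar|_\gfv$ has a lift to characteristic zero of type $\tau_v$ ensures $R_\tv\ne 0$ at places in $Y$, and a similar remark (using that $\rhobar$ itself is automorphic of weight $a$ by hypothesis) ensures nonvanishing at primes above $l$. By Theorem \ref{5.7}, $(R_\tau^{univ})^{red}$ is a finite $\bigO$-algebra of $\bigO$-rank at least $1$, so after enlarging $K$ if necessary it admits an $\bigO$-algebra map $R_\tau^{univ}\to\bigO$. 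Composing with the universal deformation yields a lift $r:G_{F^+}\to\gn(\bigO)$, and setting $\rho:=r|_{G_F}$ gives a continuous lift of $\rhobar$ which is Fontaine--Laffaille (hence crystalline) at every $v|l$ with Hodge--Tate weights prescribed by $a$, of type $\tau_v$ at every $v\in Y$, and unramified outside $T$.

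To finish, I would apply the modularity lifting theorem, Theorem 3.4 of \cite{guerberoff2009modularity}, whose hypotheses (irreducibility of $\rhobar$, bigness of $\rhobar(G_{F^+(\zeta_l)})$, $\overline{F}^{\ker\ad\rhobar}\not\supseteq F(\zeta_l)$, the Fontaine--Laffaille weight bounds on $a$, and the unramified-at-$l$ condition) are exactly what we have assumed. This produces an RACSDC automorphic representation $\pi$ on $\GL_n(\A_F)$ with $r_{l,\iota}(\pi)\cong\rho$; property (1) of the statement is then tautological, property (2) follows because $\rho|_\gfv$ is of type $\tau_v$ by construction, and property (3) follows because $\rho$ is unramified outside $T$ and, being crystalline at $v|l$ with small Hodge--Tate weights, forces $\pi_w$ to be unramified at every place $w|l$ by Proposition \ref{5.2}(1) and (3); that $\pi$ has weight $a$ is then read off from the Hodge--Tate weights of $\rho$ at $l$-adic places.

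The main obstacle is therefore not in this final assembly but is entirely subsumed in Theorem \ref{5.7}, which in turn rests on the base change to $L^+$ designed to trivialise the types $\tau_v$ over $L$ and to place us in the setting where the $R=T$ theorem of \cite{guerberoff2009modularity} applies. The minor additional points to verify are that the chosen lifts at $v\in Y$ combine consistently with the global $\nu\circ r=\epsilon^{1-n}\delta_{F/F^+}^{\mu_\pi}$ (automatic, since $\mu_\pi\equiv n\pmod 2$ by Theorem 3.4 of \cite{guerberoff2009modularity}) and that the Fontaine--Laffaille local condition at primes above $l$ is nonempty for the given $a$ (which is where the weight bound $l-1-n\ge a_{\tau,1}\ge\cdots$ is used).
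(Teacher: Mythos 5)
Your proposal follows the same route the paper intends: the paper itself gives no explicit argument beyond the remark that the result follows ``immediately'' from Theorem \ref{5.7} together with Theorem 3.4 of \cite{guerberoff2009modularity}, and what you have written is exactly a reasonable expansion of that one-liner. Two small points deserve more care than you give them. First, your deformation problem $\mathcal{S}_\tau$ imposes conditions only at $S_l$ and at the given set $Y$, but $\rhobar$ may be ramified at split places $v\nmid l$ not belonging to $Y$; these must be adjoined to the set of places where a type is fixed (choosing, say, the type of $r_{l,\iota}(\pi)|_{I_{F_{\tilde v}}}$ there) so that the universal deformation ring can be formed as in the setup preceding Lemma \ref{5.5} — note that conclusion (3) of the theorem permits $\pi$ to be ramified at such places, so this costs nothing. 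Second, Theorem 3.4 of \cite{guerberoff2009modularity} is the $R=T$ theorem established after the solvable base change to $L^+$ that trivialises the types; applying it directly over $F$ to conclude that $\rho$ is automorphic is not quite accurate. The cleaner phrasing is that a $\bigO$-point of $R_\tau^{univ}$ induces a $\bigO$-point of $R_\mathcal{S}^{univ}\cong\mathbb{T}$, so $\rho|_{G_L}$ is automorphic, and then automorphy of $\rho$ over $F$ (with the claimed local properties) follows by solvable descent along $L/F$. Neither point changes the structure of the argument, which matches the paper's.
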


\bibliographystyle{amsalpha} 
\bibliography{tobybib08} 

\end{document}